\newtheorem{thm}{Theorem}[section]
\newtheorem*{thm*}{Theorem}
\newtheorem{lem}[thm]{Lemma}
\newtheorem{prop}[thm]{Proposition}
\newtheorem{cor}[thm]{Corollary}
\newtheorem{rmk}[thm]{Remark}
\newtheorem{definition}[thm]{Definition}
\newtheorem{example}[thm]{Example}
\DeclareMathOperator{\dist}{dist}
\DeclareMathOperator{\cof}{cof}
\DeclareMathOperator{\supp}{supp}
\newcommand{\RR}{\mathbb{R}}     
\newcommand{\NN}{\mathbb{N}}     
\newcommand{\E}{\mathcal{E}}
\newcommand{\J}{\mathcal{J}}  
\newcommand{\e}{\varepsilon}
\newcommand{\weakstarto}{\overset{\ast}{\rightharpoonup}}
\newcommand{\abs}[1]{\left| #1 \right|}
\newcommand{\norm}[2][]{\left\| #2 \right\|_{#1}}
\newcommand{\Adm}[2]{T_{#1}(\mathcal{E} \ifx&#2& \else \cap #2 \fi)}
\numberwithin{equation}{section} 
\begin{document}


\title[Inertial evolution in the face of (self-)collision]{Inertial evolution of non-linear viscoelastic solids in the face of (self-)collision} 
\author[] {Anton\'in \v{C}e\v{s}\'ik$^\dag$}
\address{$^\dag$Department of Mathematical Analysis\\Faculty of Mathematics and Physics\\ 
Charles University\\Prague, Czech Republic}
\email {cesik@karlin.mff.cuni.cz}
\author[] {Giovanni Gravina$^{\ddag, *}$} 
\address{$^\ddag$School of Mathematics and Statistical Sciences\\ Arizona State University \\Tempe, AZ 85281}
\email {ggravina@asu.edu} 
\author[] {Malte Kampschulte$^{\dag}$}
\email {kampschulte@karlin.mff.cuni.cz} 
\keywords{Non-interpenetration of matter, variational inequalities, non-simple materials}
\subjclass[2020]{35Q74, 49J40, 74A30, 74B20, 74M15}
\thanks{\emph{Author ORCIDs.} Anton\'in \v{C}e\v{s}\'ik: \href{https://orcid.org/0000-0003-0207-6044}{https://orcid.org/0000-0003-0207-6044}, Giovanni Gravina: \href{https://orcid.org/0000-0001-8985-7964}{https://orcid.org/0000-0001-8985-7964}, Malte Kampschulte: \href{https://orcid.org/0000-0002-9150-4460}{https://orcid.org/0000-0002-9150-4460}.}
\date{\today}

\begin{abstract}
 We study the time evolution of non-linear viscoelastic solids in the presence of inertia and (self-)contact. For this problem we prove the existence of weak solutions for arbitrary times and initial data, thereby solving an open problem in the field. Our construction directly includes the physically correct, measure-valued contact forces and thus obeys conservation of momentum and an energy balance. In particular, we prove an independently useful compactness result for contact forces. 
\end{abstract}

\maketitle

 \section{Introduction}
 
 The study of contact and dynamic collisions between (elastic) bodies has a long history, starting from classical, 18th-century, physical considerations about conservation of energy and momentum and ranging into modern continuum mechanics. However in this, and in particular in the mathematical treatment of the latter, until now, there has always been a divide between more phenomenological and \emph{ab initio} approaches. 
 
 This divide is a direct consequence of the difficulty and irreducibility of the full problem. If one is not able to fully treat dynamic contact between deformable elastic bodies, then one has to simplify the problem in one of several directions. 
 
 The first is to remove the ability of the bodies to deform, treating them as rigid bodies. However, when this is done, the problem immediately becomes ill-posed and needs to be supplemented by a phenomenological contact law. The second is to soften the contact itself, replacing the hard dichotomy of ``in contact'' vs.\ ``not in contact'' with a soft repulsion potential. Yet this also introduces an indirect contact law. The third possibility is to remove the dynamic aspects and focus on the static or quasistatic situation instead.

 Additionally, even in this last case, there is a difference in difficulty between the collision of an elastic solid with a static obstacle and the collision of two elastic solids with each other or even an elastic solid deforming so far as to collide with itself. In this work for the first time, we prove the existence of weak solutions to this general case involving inertia and large deformations.
 
 Specifically, consider one or more elastic solids, given in Lagrangian coordinates by a reference configuration $Q \subset \RR^n$ (with multiple solids represented by multiple connected components of $Q$), as well as a deformation $\eta \colon Q \to \Omega \subset \RR^n$ to describe the current configuration. To each deformation we attach an elastic energy $E(\eta)$ and to each change in configuration a dissipation potential $R(\eta,\partial_t \eta)$, which for physical reasons has to also depend on the deformation itself \cite{antmanPhysicallyUnacceptableViscous1998}.
 
 If the solid has density $\rho$ in the reference configuration, then by Newton's second law we expect the solid to evolve according to
 \begin{align*}
  \rho \partial_{tt} \eta + DE(\eta) + D_2R(\eta,\partial_t \eta) = f,
 \end{align*}
 where $DE$ and $D_2R$ denote the formal Fr\'echet derivatives with respect to $\eta$ and $\partial_t \eta$ respectively, and $f$ is an external force.

 Of course this only works in the absence of collisions. As alluded to before, to deal with collisions, there needs to be another modelling assumption, which in turn needs to be justified. We choose here to adopt the absolute \emph{minimal assumption}, which is also the only assumption that we can be sure holds universally, namely \emph{non-interpenetration of matter}. Translated into the mathematical framework this means that the only additional information we assume in modelling is that $\eta$ is injective, except possibly for a set of measure zero.\footnote{In the absence of rigid bodies and point-masses this turns out to also be a sufficient assumption. For more details see the discussion in Section \ref{sec:physConsiderations}.}
 
 Such a restriction of the set of admissible deformations by its very nature results in a Lagrange-multiplier, which we can readily identify as the contact force $\sigma$. This force has to be supported on the contact set and because of considerations involving conservation of energy and momentum, it has to be of equal magnitude and opposite direction at each pair of points where solids touch. Additionally, if we do not assume any friction, it has to point in the same direction as the respective interior normal of the physical configuration.
 
 With this, we can summarize the system under study as
 \begin{align} \label{eq:strongSolution}   \begin{cases}
   \rho \partial_{tt} \eta + DE(\eta) + D_2R(\eta,\partial_t \eta) = \sigma + f, \\
   \sigma = |\sigma| n_\eta,\,\, \supp(\sigma) \subset C_\eta, 
  \end{cases}
 \end{align}
 where $C_\eta$ is the set of (self-)contact points and $n_\eta$ denotes the interior normal of the deformed configuration. Additionally, we require an ``equal and opposite'' assumption on the contact force, which is easier to write in weak form (see \Cref{def-contact-force} below). We can then give an abridged version of out main result as
 \begin{thm*}[\Cref{main-thm-VI} (abridged)]
  Under some (physical) assumptions on $E$ and $R$, for any sufficiently reasonable initial data (injective a.e.\@ and of finite energy) and any time $T> 0$, the system \eqref{eq:strongSolution} has a weak solution in the interval $[0,T]$. This solution obeys conservation of momentum and the physical energy inequality.
 \end{thm*}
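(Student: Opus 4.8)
The plan is to construct weak solutions via a carefully regularized time-discretization scheme, combining a minimizing-movements (De Giorgi) approach for the inertial dynamics with a penalization of the non-interpenetration constraint, and then to pass to the limit. First I would set up the functional-analytic framework: identify the energy space (some Sobolev class $W^{2,p}(Q;\RR^n)$ with $p>n$, say, so that deformations are continuous and the Ciarlet--Ne\v{c}as condition makes sense), verify that $E$ is coercive and weakly lower semicontinuous with the polyconvexity-type assumptions, and that $R(\eta,\cdot)$ is a convex dissipation potential controlling a suitable seminorm of $\partial_t\eta$. The set of admissible deformations $\mathcal{E}$ is the closed subset cut out by local invertibility plus $\int_Q \det D\eta \le \mathcal{L}^N(\eta(Q))$; the key structural fact (presumably established earlier in the paper) is that injective-a.e. deformations of finite energy have a well-defined interior normal $n_\eta$ and contact set $C_\eta$, and that the class of admissible variations $T_\eta(\mathcal{E})$ behaves well under convergence.

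\smallskip

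Next I would run the discrete scheme. Fix a time step $\tau = T/k$ and, inductively, define $\eta^{j+1}$ as a minimizer of
\[
 \eta \mapsto \frac{\rho}{2\tau^2}\int_Q \abs{\eta - 2\eta^j + \eta^{j-1}}^2 + \tau R\!\left(\eta^j, \tfrac{\eta-\eta^j}{\tau}\right) + E(\eta) + \frac1\e P(\eta) - \int_Q f\cdot\eta
\]
over the energy space, where $P$ is a smooth penalization of the non-interpenetration constraint (e.g. built from the negative part of a signed-distance-type functional, or a Ciarlet--Ne\v{c}as penalty). The direct method gives existence of minimizers. The Euler--Lagrange equation yields a discrete momentum balance in which the penalty gradient $\frac1\e DP(\eta^{j+1})$ plays the role of the approximate contact force $\sigma_\e$; by construction it has the right sign structure and is (approximately) supported near the contact set. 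Testing the scheme with $\eta^{j+1}-\eta^j$ and summing gives the discrete energy inequality, which after interpolation in time provides the uniform a priori bounds: $\eta$ bounded in $L^\infty(0,T;\text{energy space})$, $\partial_t\eta$ bounded in $L^\infty(0,T;L^2)\cap(\text{dissipation space})$, and crucially a uniform bound on the total variation / mass of $\sigma_\e$ coming from the momentum equation tested against suitable vector fields.

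\smallskip

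Then comes the limit passage, first $\e\to0$ (with $\tau$ fixed, or simultaneously along a diagonal sequence). From the a priori bounds I extract weakly-* convergent subsequences; Aubin--Lions-type compactness upgrades this to strong convergence of $\eta$ in $C(\,[0,T];C(Q)\,)$, which is what is needed to pass to the limit in the non-convex elastic term (via weak continuity of minors) and in the geometric objects. The penalty forces $\sigma_\e$ converge weakly-* as measures to some $\sigma$; one must show $\supp\sigma\subset C_\eta$ (using that the penalty vanishes away from contact and the strong uniform convergence of $\eta$) and that the ``equal and opposite'' weak formulation of \Cref{def-contact-force} survives the limit. This last point — passing to the limit in the measure-valued contact force while keeping its precise structure, i.e. the compactness result for contact forces advertised in the abstract — is the main obstacle: weak-* convergence of measures is too weak to preserve pointwise normal-direction and action-reaction constraints, so one needs the strong convergence of the deformations together with a careful argument identifying the limiting force through its action on admissible test deformations $T_\eta(\mathcal{E})$, and one must verify that admissible test functions for the limit can be approximated by admissible test functions for the $\e$-problems. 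Finally, recovering the energy \emph{inequality} (rather than just boundedness) requires lower semicontinuity of the dissipation and of the kinetic energy, plus care with the contact term, which by its conservative nature should contribute non-negatively or drop out; the momentum conservation follows by testing the limit equation with rigid motions, for which the contact force contribution vanishes by the action-reaction structure.
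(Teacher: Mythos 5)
Your proposal differs from the paper's strategy in two essential ways, and both introduce genuine gaps. First, the paper does not use the second-order minimizing-movements step you write down. Instead it follows the two-time-scale method of \cite{benesovaVariationalApproachHyperbolic2020}: one first replaces $\rho\partial_{tt}\eta$ by the time-delayed quotient $\rho\,(\partial_t\eta(t)-\partial_t\eta(t-h))/h$, then solves this (essentially quasistatic) problem on each interval $[(j-1)h,jh]$ via \emph{first}-order minimizing movements (\Cref{Aux-sec}), and only then lets $h\to 0$. The reason is exactly the discrete energy estimate: testing your functional with $\eta=\eta^j$ produces a term of the form $\tfrac{\rho}{2}\|v^{j+1}-v^j\|_{L^2}^2$ on the left, which does not control $\tfrac{\rho}{2}\|v^{j+1}\|_{L^2}^2$ and therefore does not telescope to the kinetic energy. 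So the claim that ``testing the scheme with $\eta^{j+1}-\eta^j$ and summing gives the discrete energy inequality'' would already fail; obtaining \eqref{dteta-bound} and \eqref{unifEbound} requires precisely the two-level structure (see the passage from \eqref{tdEE} to \eqref{dteta-bound} in the paper, which sums the per-interval estimates after multiplying by $h$).

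Second, and more fundamentally, you propose a \emph{penalization} $\tfrac{1}{\e}P(\eta)$ of the non-interpenetration constraint, with the contact force emerging as the limit of $\tfrac1\e DP$. The paper instead keeps the constraint hard (minimization over $\E\cap W^{k_0,2}$) and extracts the contact force by the separating-hyperplane argument of Palmer--Healey (\Cref{existence-CF-quasi}): the two convex sets $M^+_{\eta_k}$ and $M^-$ live in $\RR\times C(\partial Q)$, so Hahn--Banach automatically produces a Lagrange multiplier in $\RR\times M(\partial Q)$ — a genuine Radon measure on $\partial Q$, nonnegative and supported on $C_{\eta_k}$, with the normal direction and action--reaction structure built in. A penalty gradient $\tfrac1\e DP(\eta_\e)$, by contrast, lives a priori only in $(W^{2,p})^*$ or $(W^{k_0,2})^*$; there is no mechanism in your proposal that upgrades it to a measure, shows $d\sigma = n_\eta\,d|\sigma|$, or proves $\int (\varphi\circ\eta)\,d\sigma=0$. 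This is not a technicality: the paper explicitly identifies recovering the contact force as a measure (rather than a distribution) as the open problem of \cite{kromerQuasistaticViscoelasticitySelfcontact2019} that it resolves (\Cref{rem:KRopenProblem1}), and the entire machinery of \Cref{Contact-sec} — the compactness-closure theorems \Cref{cc-cf}, \Cref{compactness-cforce}, the disintegration argument in \Cref{cforce-l2intime}, the almost-normal fields of \Cref{smooth-almost-normal} used to get the quantitative bound \eqref{cforce-h-estimate} — exists to make this work. You correctly flag ``passing to the limit in the measure-valued contact force while keeping its precise structure'' as the main obstacle, but the proposal offers no concrete route through it; the separating-hyperplane construction together with the uniform $C^1$ convergence of $\eta^{(h)}$ (hence of $n_{\eta^{(h)}}$) is the route, and it is not available in the penalty framework without essentially re-deriving it.
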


 We mention here that in the full result we also treat the case in which the evolution happens in the presence of rigid, immovable obstacles.
 
 The proof of the theorem relies mainly on variational methods. On one hand, this is not surprising, as more classical PDE methods (e.g., fixed-points or Galerkin approaches) are often inadequate to handle the difficulties that arise from the fact that the non-interpenetration constraint results in a non-convex, non-linear state space. On the other hand, so far, variational methods have generally been restricted to static and quasistatic systems. Indeed, for the problem at hand, we build quite explicitly on the work \cite{PalmerHealey} by Palmer and Healey, where the authors study self-contact for the static case. Their results have been recently extended to the quasistatic case by Kr\"omer and Roub\'{i}\v{c}ek \cite{kromerQuasistaticViscoelasticitySelfcontact2019}.\footnote{As a necessary step towards the proof, we also improve their result to give a quantization of the contact force as a measure, which in \cite{kromerQuasistaticViscoelasticitySelfcontact2019} was only characterized as part of a distribution. We thus in fact solve both of their open problems (See \Cref{rem:KRopenProblem1}). In particular, we believe that the more detailed treatment of convergence of contact forces used for this might be of independent interest.}

 The crucial ingredient in our proofs is the method of using two time-scales developed in \cite{benesovaVariationalApproachHyperbolic2020}, which shows a way to lift almost any quasistatic weak existence result to a corresponding weak existence result for the associated inertial problem. 
 
 For the details of this approach, we also refer to \cite[Sec.\@ 3]{benesovaVariationalApproachHyperbolic2020}, where the corresponding result without collisions is shown (see also the introductory sections of \cite{bks2021poroelastic}, where an attempt is made to elaborate on some of the more general underpinnings of this method). However, we aim to keep the use of this method in the current paper self-contained.
 
 Finally we note that throughout the paper we will restrict our attention to generalized second order materials (compare in particular with \cite{MR2567249} and the references therein), i.e.\@, we assume that the elastic energy will depend on the second derivative $\nabla^2 \eta$ of the deformation. While at first glance this might seem like a departure from the classical theory of elasticity, we note that for the study of (self-)contact such a restriction is necessary. Indeed, not only is it needed to avoid issues arising from points where the Jacobian $\det \nabla \eta$ vanishes, but also because without the resulting $C^1$-regularity that the theory implies, microscopic oscillations of the exterior normal can lead to boundary microstructure and produce artificial friction (see \Cref{rem:lowerRegularityFriction}), all of which is outside of the scope of the current paper.
 
 \subsection{Structure of the paper} In \Cref{Modelling-sec}, we will give a precise definition of the assumptions we require in terms of energy and the dissipation, as well as a precise statement of the main theorem and some possible extensions and corollaries. 
 Next, in \Cref{Contact-sec}, we will go on and derive some of the properties related to contact forces as well as their convergence behavior, which might be of independent interest.
 The bulk of the paper will then be devoted to the proof of the main theorem, first by showing an auxiliary quasistatic result in \Cref{Aux-sec} and then by using this to generate an approximating sequence of time-delayed solutions to the actual equation in \Cref{inertia-sec}.

 Finally, in \Cref{sec:physConsiderations} we then relate this result to the actual physics we are aiming to describe by giving an example energy-dissipation pair that satisfies the assumptions and by discussing how the result is connected to momentum conservation. We also show by an example that the condition of non-interpenetration of matter that we use is indeed sufficient and there are no phenomenological contact laws needed to arrive at the correct behavior.

 \subsection{Notation}
Throughout this paper we use the following notation, unless stated otherwise.

Deformations will be denoted by $\eta\colon Q \to \Omega$ or $\eta\colon [0,T] \times Q \to \Omega$ in the case of time-dependence, where $Q\subset \RR^n$ is a reference configuration, and $\Omega \subset \RR^n$ a possibly unbounded containing domain. 

Points in $Q$ are notated as $x$. Functions which depend on space and time are always notated time first e.g. $\eta(t,x)$. If we want to consider deformations for a fixed time, we will also use $\eta(t) \coloneqq \eta(t,\cdot)$ to ease notation.
 
 For any $x\in \partial Q$, the vector $n_Q(x)$ denotes the interior unit normal to $\partial Q$ at $x$. Given additionally a sufficiently regular deformation $\eta \colon Q \to \Omega$, we will use 
\[
n_\eta(x) \coloneqq \frac{\cof \nabla \eta(x) n_Q(x)}{\abs{\cof \nabla \eta(x) n_Q(x)}}
\]
to denote the interior unit normal of $\eta(Q)$ at $\eta(x)$. In case $\eta$ is also time-dependent we use $n_\eta(t,x)$ in place of $n_{\eta(t)}(x)$. Finally for $y\in \partial \Omega$, the vector $n_\Omega(y)$ denotes the \emph{exterior}\footnote{To reduce the distinction between cases, it is best to not think of $\Omega$ as the domain, but of $\RR^n \setminus \Omega$ as a fixed, rigid obstacle. Thus $n_\Omega$ is the interior normal of that obstacle, in the same way $n_\eta$ is the interior normal of the movable solids.} unit normal at $y$. 

We use the usual notations $W^{k,p}(Q;\RR^n)$ for Sobolev spaces and $L^q((0,T); W^{k,p}(Q;\RR^n))$ for the respective Bochner spaces. A subscript $\Gamma$ is used to denote spaces of functions whose trace vanishes on that set, e.g.\@ $W^{k,p}_\Gamma(Q) \coloneqq \{u \in W^{k,p}(Q): u|_\Gamma = 0\}$. Additionally, we denote by $M(K;\RR^n)$ the space of $\RR^n$-valued Radon measures and by $M^+(K)$ the set of non-negative Radon measures on a compact set $K$. For a measure $\sigma\in M(K;\RR^n)$ and $\varphi\in C(K;\RR^n)$ we denote 
\[
\langle \sigma , \varphi \rangle \coloneqq \int_{K} \varphi\cdot d\sigma.
\]
 
 Spaces are written out in full (e.g., $L^2((0,T); W^{1,2}(Q;\RR^n))$), but when writing norms, we usually omit both the domain and the image if there is no chance of confusion (e.g., we write $\norm[L^2]{f}$ instead of $\norm[L^2(Q;\RR^n)]{f}$). Additionally, when dealing with linear operators on Sobolev spaces, we write the linear argument in angled brackets, e.g.
 \[A\langle u \rangle \coloneqq \langle A, u\rangle_{(W^{k,p})^* \times W^{k,p}}\]
 for $A \colon W^{k,p}(Q) \to \RR$ and $u \in W^{k,p}(Q)$.
 
 \subsection{Acknowledgements}
The authors acknowledge the support of the the Primus research programme of Charles University under grant No.\@ PRIMUS/19/SCI/01. The research of A.\v{C}.\@ and M.K.\@ was partly funded by the ERC-CZ grant LL2105. A.\v{C}.\@ further acknowledges the support of Charles University, project GA UK No.\@ 393421. The work of G.G.\@ and M.K.\@ was partially supported by the Charles University research program No.\@ UNCE/SCI/023 and by the Czech Science Foundation (GA\v{C}R) under grant No.\@ GJ19-11707Y.

\subsection{Research data policy and data availability statements}
Data sharing not applicable to this article as no datasets were generated or analyzed during the current study.

 \section{Modelling of viscoelastic materials and statement of the main results}
 \label{Modelling-sec}
 
 \subsection{Viscoelastic solids}
The time evolution of a viscoelastic solid body in $\RR^n$ can be described in Lagrangian coordinates by a (time dependent) deformation of a reference configuration $Q \subset \RR^n$, which in the following we typically denote by $\eta \colon [0, T] \times Q \to \RR^n$. The set $Q \subset \RR^n$ will be a $C^{1,\alpha}$-smooth, bounded domain, or alternatively a disjoint union of finitely many of such domains in order to describe multiple bodies. We assume that the movement of the solid is confined to the set $\Omega \subset \RR^n$ which is a $C^{1,\alpha}$-smooth domain, but possibly unbounded (e.g.\ $\Omega = \RR^n$, half-space, etc.). Furthermore, in order to rule out non-physical phenomena such as self-interpenetration, we restrict our attention to deformations that are almost everywhere globally injective and orientation preserving. These assumptions are encoded in the class of admissible deformations, which we define using the Ciarlet--Ne\v{c}as condition \cite{CN87} as
\begin{equation}
\label{domain-def}
\E \coloneqq \left\{\eta \in W^{2, p}(Q; \RR^n) : \eta(Q) \subset \Omega, \eta|_\Gamma = \eta_0, \det \nabla \eta > 0, \text{ and } \mathcal{L}^n(\eta(Q)) = \int_{Q}\det \nabla \eta(x)\,dx \right\}.
\end{equation}
Here we use $\eta_0$ to denote a given admissible (initial) deformation and let $\Gamma$ be a (fixed) measurable subset of $\partial Q$. Note, however, that for the main result of this paper we do not assume that $\mathcal{H}^{n - 1}(\Gamma) > 0$ and refer the reader to \Cref{partial-Dir} for more information. Here and in the following we assume that $p > n$. In particular, this implies that every $\eta \in \E$ admits a representative of class $C^{1, 1 - \frac{n}{p}}$. Throughout the rest of the paper we identify $\eta$ with this regular representative without further notice. Additionally, we assume a constant Lagrangian density $\rho \in (0,\infty)$, but remark that all our arguments also work for variable densities, as long as these are bounded from above and away from zero.

Next, we specify the assumptions on the energy-dissipation pair $(E, R)$. To be precise, we assume that the elastic energy $E \colon W^{2,p}(Q; \RR^n)\to (-\infty, \infty]$ has the following properties: 
\begin{enumerate}[label=(E.\arabic*), ref=E.\arabic*]
\item \label{E1} There exists $E_{\min} > - \infty$ such that $E(\eta) \ge E_{\min}$ for all $\eta \in W^{2, p}(Q; \RR^n)$. Moreover, $E(\eta) < \infty$ for every $\eta \in W^{2, p}(Q; \RR^n)$ with $\inf_Q\det \nabla \eta > 0$.
\item \label{E2} For every $E_0  \ge E_{\min}$ there exists $\e_0 > 0$ such that $\det \nabla \eta \ge \e_0$ for all $\eta$ with $E(\eta) \le E_0$.
\item \label{E3} For every $E_0 \ge E_{\min}$ there exists a constant $C$ such that 
\[
\|\nabla^2 \eta\|_{L^p} \le C 
\]
for all $\eta$ with $E(\eta) < E_0$.
\item \label{E4} $E$ is weakly lower semicontinuous, that is, 
\[
E(\eta) \le \liminf_{k \to \infty} E(\eta_k)
\]
whenever $\eta_{k} \rightharpoonup \eta$ in $W^{2, p}(Q; \RR^n)$. Moreover, $E$ is continuous with respect to strong convergence in $W^{2, p}(Q; \RR^n)$.

\item \label{E5} $E$ is differentiable in its effective domain with derivative $DE(\eta) \in (W^{2, p}(Q; \RR^n))^*$ given by 
\[
 DE(\eta) \langle \varphi \rangle = \left.\frac{d}{d\e} E(\eta + \e \varphi))\right|_{\mathrlap{\e = 0}}. 
\]
Furthermore, $DE$ is bounded on any sub-level set of $E$ and $DE(\eta_k) \langle \varphi \rangle \to DE(\eta)\langle \varphi \rangle$ whenever $\eta_k \to \eta$ in $W^{2, p}(K; \RR^n)$ for all $K$ compactly contained in $\overline{Q}$ with $\dist(K, \Gamma) > 0$ and $\varphi \in W^{2, p}_{\Gamma}(Q; \RR^n)$.

\item \label{E6} $DE$ satisfies 
\[
\liminf_{k \to \infty}  (DE(\eta_k) - DE(\eta)) \langle (\eta_k - \eta)\psi \rangle \ge 0
\]
for all for $\psi \in C^{\infty}_{\Gamma}(Q; [0, 1])$ and all sequences $\eta_k \rightharpoonup \eta$ in $W^{2, p}(Q; \RR^n)$. In addition, $DE$ satisfies the following Minty-type property: If 
\[
\limsup_{k \to \infty} (DE(\eta_k) - DE(\eta))\langle (\eta_k - \eta)\psi \rangle \le 0
\]
for all $\psi \in C^{\infty}_{\Gamma}(Q; [0, 1])$, then $\eta_k \to \eta$ in $W^{2, p}(K; \RR^n)$ for all $K$ compactly contained in $\overline{Q}$ with $\dist(K, \Gamma) > 0$.
\end{enumerate}

Additionally, we assume that the dissipation potential $R \colon W^{2, p}(Q; \RR^n) \times W^{1, 2}(Q; \RR^n) \to [0, \infty)$ satisfies the following properties:

\begin{enumerate}[label=(R.\arabic*), ref=R.\arabic*]
\item \label{R1} $R$ is weakly lower semicontinuous in its second argument, that is, for all $\eta \in W^{2, p}(Q; \RR^n)$ and every $b_{k} \rightharpoonup b$ in $W^{1, 2}(Q; \RR^n)$ we have that
\[
R(\eta, b) \le \liminf_{k \to \infty} R(\eta, b_k)
\]
\item \label{R2} $R$ is homogeneous of degree $2$ with respect to its second argument, that is, 
\[
R(\eta, \lambda b) = \lambda^2 R(\eta, b)
\]
for all $\lambda \in \RR$.
\item \label{R3} $R$ admits the following Korn-type inequality: For any $\e_0 > 0$, there exists $K_R$ such that
\[
K_R \|b\|_{W^{1,2}}^2 \le \|b\|_{L^2}^2 +  R(\eta, b)
\]
for all $\eta \in \mathcal{E}$ with $\det \nabla \eta > \e_0$ and all $b \in W^{1, 2}(Q; \RR^n)$. 
\item \label{R4} $R$ is differentiable in its second argument, with derivative $D_2R(\eta, b) \in (W^{1, 2}(Q; \RR^n))^*$ given by
\[
 D_2R(\eta, b)\langle \varphi \rangle \coloneqq \left.\frac{d}{d\e} R(\eta, b + \e \varphi)\right|_{\mathrlap{\e = 0}}. 
\]
Furthermore, the map $(\eta, b) \mapsto D_2R(\eta, b)$ is bounded and weakly continuous with respect to both arguments, that is, 
\[
\lim_{k \to \infty}  D_2R(\eta_k, b_k) \langle \varphi \rangle =  D_2R(\eta, b) \langle \varphi \rangle
\]
holds for all $\varphi \in W^{1, 2}(Q; \RR^n)$ and all sequences $\eta_k \rightharpoonup \eta$ in $W^{2, p}(Q; \RR^n)$ and $b_k \rightharpoonup b$ in $W^{1, 2}(Q; \RR^n)$.
\end{enumerate}
We also introduce a variant of \eqref{R3} that will be used for the quasistatic case in the form of
\begin{enumerate}[label=(R.3\textsubscript{q}), ref=R.3\textsubscript{q}]
 \item \label{R3q} $R$ admits the following Korn-type inequality: For any $\e_0 > 0$, there exists $K_R$ such that
\[
K_R \|b\|_{W^{1,2}}^2 \le  R(\eta, b)
\]
for all $\eta \in \mathcal{E}$ with $\det \nabla \eta > \e_0$ and all $b \in W^{1, 2}_{\Gamma}(Q; \RR^n)$.
\end{enumerate}

We mention here that the assumptions on the energy-dissipation pair are standard within the framework of second-order viscoelastic materials (see in particular \cite{MR2567249}, \cite{kromerQuasistaticViscoelasticitySelfcontact2019}, and the references therein). For the convenience of the reader, explicit examples of $E$ and $R$ that satisfy the assumptions above are given in \Cref{sec:physConsiderations}.

\begin{rmk}\label{DRgrowth}
 Note that in particular \eqref{R2} and \eqref{R4} allow us to derive some additional growth conditions on the dissipation and its derivative. First of all, by taking the derivative of the identity in \eqref{R2} with respect to $b$ and dividing by $\lambda$, we get
 \begin{equation} \label{eq:R1homog}
  D_2R(\eta,\lambda b) = \lambda D_2R(\eta,b)
 \end{equation}
 i.e., $D_2R$ is homogeneous of degree $1$ with respect to its second argument. 
 
 Furthermore we can prove that for any $E_0 > E_{\min}$ there exists a constant $C$ such that
 \begin{align} \label{eq:D2Rgrowthbounds}
  \norm[(W^{1, 2})^*]{D_2R(\eta,b)} &\leq C \norm[W^{1,2}]{b},   \\ \label{eq:Rgrowthbounds}
  2R(\eta,b) &\leq C \norm[W^{1,2}]{b}^2 \end{align}
 for all $b \in W^{1,2}(Q;\RR^n)$ and all $\eta \in W^{2,p}(Q;\RR^n)$ with $E(\eta) \leq E_0$. To see this, assume that \eqref{eq:D2Rgrowthbounds} is not true. Then there exist sequences $\{\eta_k\}_{k\in\NN}$ and $\{b_k\}_{k\in\NN}$ with $\norm[(W^{1, 2})^*]{D_2R(\eta_k,b_k)} > k \norm[W^{1,2}]{b_k}$ and $E(\eta_k) \leq E_0$. Additionally, due to the $1$-homogeneity of $D_2R$ (see \eqref{eq:R1homog} above), we can assume without loss that $\norm[W^{1,2}]{b_k}= 1$. This allows us to use\eqref{E3} to pick weakly converging subsequences (which we do not relabel) and respective limits $\eta$ and $b$. But then on one hand, by \eqref{R4}, $D_2R(\eta_k,b_k)$ converges and thus $\norm[(W^{1, 2})^*]{D_2R(\eta_k,b_k)}$ needs to stay bounded, and on the other hand, by our assumption it is larger than $k$, which is a contradiction. This proves \eqref{eq:D2Rgrowthbounds}. Finally \eqref{eq:Rgrowthbounds} follows from \eqref{eq:D2Rgrowthbounds} by noting that due to \eqref{R2} we have
 \begin{align*}
  2R(\eta,b) = D_2R(\eta,b)\langle b\rangle \leq \norm[(W^{1, 2})^*]{D_2R(\eta,b)} \norm[W^{1,2}]{b} \leq C \norm[W^{1,2}]{b}^2.
 \end{align*}
\end{rmk}

\begin{rmk}
\label{partial-Dir}
The difference between \eqref{R3} and \eqref{R3q} is subtle but central to the difference between quasistatic and inertial evolutions. The reasons for this do not only become evident in the proof, but also have a physical explanation. Indeed, in contrast to the full inertial problem, in the quasistatic regime there is no automatic conservation of linear or rotational momentum. As a result, when considering physical dissipations such as $R(\eta,b) = \int_Q | \nabla \eta^T \nabla b + \nabla b^T \nabla \eta|^2\, dx$, which are invariant under Galilean transformations, we need to include additional restrictions to the admissible deformations in $\mathcal{E}$, such as (partial) Dirichlet boundary data or a fixed center of mass and rotation around it.
\end{rmk}

\begin{rmk}[On Dirichlet boundary data and contact] \label{rmk:DirichletBoundary}
 As it is a common occurrence in various applications, we incorporated the potential for Dirichlet boundary data into our formulation. The handling of these boundary conditions during the evolution is mostly standard; however, some subtleties arise when it comes to contact. When a freely moving part of the body comes into contact with a section of the solid where the Dirichlet boundary condition is specified, the latter behaves like a fixed obstacle. Since we are able to deal with fixed obstacles, this situation does not present any issues. 
 
 What can potentially be more problematic, however, is the transition between the fixed and the free part of the boundary. As we require to cut off test functions in proximity of the fixed part of the boundary, we inevitably lose control over the resulting contact force. Notice that as long as this only happens to one of the sides that comes into contact, this is not an issue. Indeed, there is a corresponding opposite and equal force on the other side, which carries the same information that was lost due to the cut-off. In particular, we only run into issues if contact happens on both sides at points or regions where the fixed portion of the boundary transitions into the freely moving boundary.
 
 To avoid this situation and keep the mathematical details manageable, we require $\eta_0|_\Gamma$ to be injective and that $\eta_0(\Gamma) \cap \partial \Omega = \emptyset$. We note however that more general situations can be handled with some additional care as well.
\end{rmk}

\subsection{Statement of the main results} \label{main-sec}
The precise definition of (weak) solution to the initial value problem considered in this paper can be formulated as follows. 
\begin{definition}
\label{var-in-def}
Let $T>0$, $\eta_0 \in \E$, $\eta^* \in L^2(Q; \RR^n)$, and $f \in L^2((0, T); L^2(Q; \RR^n))$ be given. We say that 
\[
\eta \in L^{\infty}((0,T); \E) \cap W^{1,2}((0, T); W^{1, 2}(Q; \RR^n))
\]
with $E(\eta)\in L^\infty((0,T))$ is a weak solution to \eqref{eq:strongSolution} in $(0, T)$ if $\eta(0) = \eta_0$ and the variational inequality
\begin{multline}
\label{var-ineq-def}
\int_0^T DE(\eta(t))\langle  \varphi(t) \rangle + D_2R(\eta(t), \partial_t \eta(t))\langle  \varphi(t)\rangle\,dt \\ - \rho \langle \eta^*, \varphi(0)\rangle_{L^2} - \int_0^T \rho \langle \partial_t \eta(t), \partial_t\varphi(t) \rangle_{L^2} \,dt \ge \int_0^T \langle f(t), \varphi(t) \rangle_{L^2}\,dt
\end{multline}
holds for all $\varphi \in C([0, T]; \Adm{\eta}{}) \cap C^1_c([0, T); L^2(Q; \RR^n))$. Here the set $\Adm{\eta}{}$ denotes the class of admissible perturbations for the deformation $\eta$; its precise definition is given below in \Cref{def-admissibleTestFunctions-timedep}.

Furthermore, we say that this $\eta$ is a weak solution with a contact force if additionally it satisfies
\begin{multline*}
\int_0^T DE(\eta(t))\langle  \varphi(t) \rangle + D_2R(\eta(t), \partial_t \eta(t))\langle  \varphi(t)\rangle\,dt \\ - \rho \langle \eta^*, \varphi(0)\rangle_{L^2} - \int_0^T \rho \langle \partial_t \eta(t), \partial_t\varphi(t) \rangle_{L^2} \,dt = \int_{[0,T]\times \partial Q} \varphi(t,x)\cdot d \sigma(t,x) +  \int_0^T \langle f(t), \varphi(t) \rangle_{L^2}\,dt
\end{multline*}
 for all $\varphi \in C([0, T]; W^{2,p}_\Gamma(Q;\RR^n)) \cap C^1_c([0, T); L^2(Q; \RR^n))$, where $\sigma \in M([0,T] \times\partial Q; \RR^n)$ is a contact force satisfying the action-reaction principle in the sense of \Cref{def-contact-force}.
\end{definition}

Observe that in view of its regularity, $\eta$ belongs to the space $C_w([0,T];W^{2,p}(Q;\RR^n))$. Therefore, we have $\eta(t) \in W^{2,p}(Q;\RR^n)$ for all $t \in [0,T]$, and in particular the initial condition $\eta(0) = \eta_0$ holds in the classical sense. 

With this in hand, we can state the main result of this paper.

\begin{thm}
\label{main-thm-VI}
Let $E$ and $R$ be as in \eqref{E1}--\eqref{E6} and \eqref{R1}--\eqref{R4}, respectively, and let $T> 0$, $\eta_0 \in \E$, $\eta^* \in L^2(Q; \RR^n)$, and $f \in L^2((0,T); L^2(Q; \RR^n))$ be given. Then \eqref{eq:strongSolution} admits a weak solution with a contact force in $(0,T)$ in the sense of \Cref{var-in-def}, where the resulting contact force $\sigma$ has no concentrations in time. Additionally this solution satisfies the energy inequality
\begin{align*}
 E(\eta(t)) + \frac{\rho}{2} \norm[L^2]{\partial_t \eta(t)}^2 + \int_0^t 2R(\eta(s),\partial_t\eta(s)) \,ds \leq E(\eta_0) + \frac{\rho}{2} \norm[L^2]{\eta^*}^2 + \int_0^t \langle f(s),\partial_t \eta(s)\rangle_{L^2}\, ds
\end{align*}
for almost all $t\in [0,T]$.
\end{thm}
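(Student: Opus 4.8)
The plan is to construct $\eta$ as the limit of a sequence of \emph{time-delayed} approximate solutions, following the two time-scale strategy of \cite{benesovaVariationalApproachHyperbolic2020} and using the auxiliary quasistatic existence result of \Cref{Aux-sec} to supply the contact force at each stage. Fix a time step $\tau = T/N$ and, starting from $\eta_0$ together with the prescribed initial velocity $\eta^*$, apply \Cref{Aux-sec} inductively on the intervals $I_k = [(k-1)\tau, k\tau)$: on $I_k$ one solves a quasistatic-type problem of the form $D_2R(\eta^{k-1},\,\cdot\,) + DE(\eta) = g_k$ under the Ciarlet--Ne\v{c}as constraint, where $g_k$ contains the external force together with a discrete second difference in time built from the two previous intervals, arranged so that in the limit $\tau \to 0$ this discrete second difference reconstructs the inertial term $\rho\,\partial_{tt}\eta$. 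By \Cref{Aux-sec} each such step admits a solution in $\E$ together with a contact force $\sigma \in M(\partial Q;\RR^n)$ supported on $C_\eta$, parallel to $n_\eta$, satisfying the action--reaction identity of \Cref{def-contact-force}, and --- this being the improvement over \cite{kromerQuasistaticViscoelasticitySelfcontact2019} --- quantified as a genuine measure. Concatenating the steps gives, for every $\tau$, an approximation $\eta_\tau \in L^\infty((0,T);\E) \cap W^{1,2}((0,T);W^{1,2}(Q;\RR^n))$ and a contact force $\sigma_\tau \in L^2((0,T);M(\partial Q;\RR^n))$ solving a time-delayed version of \eqref{var-ineq-def}, together with a discrete energy inequality.

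The second step is to extract $\tau$-uniform a priori bounds. Testing the time-delayed equation with the discrete time-difference of $\eta_\tau$ and summing yields the discrete energy inequality, which by \eqref{E1}, \eqref{E2}, \eqref{E3} and the Korn-type inequality \eqref{R3} controls $\sup_t E(\eta_\tau(t))$, $\sup_t \norm[L^2]{\partial_t\eta_\tau(t)}$ and $\norm[L^2((0,T);W^{1,2})]{\partial_t\eta_\tau}$, and hence also $\norm[L^\infty((0,T);W^{2,p})]{\eta_\tau}$, all uniformly in $\tau$; the growth bound \eqref{eq:D2Rgrowthbounds} then controls $D_2R(\eta_\tau,\partial_t\eta_\tau)$ in $L^2((0,T);W^{-1,2})$, and the equation itself controls $\norm[L^2((0,T);M)]{\sigma_\tau}$. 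An Aubin--Lions--Simon compactness argument then gives, along a subsequence, $\eta_\tau \to \eta$ strongly in $C([0,T];C^1(\overline{Q}))$, $\eta_\tau \overset{*}{\weakto} \eta$ in $L^\infty((0,T);W^{2,p})$, $\partial_t\eta_\tau \weakto \partial_t\eta$ in $L^2((0,T);W^{1,2})$, and weak-$*$ limits for $\sigma_\tau$ and for $D_2R(\eta_\tau,\partial_t\eta_\tau)$.

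The core of the argument is the passage to the limit in the three nonlinear terms. The dissipative term converges by the weak continuity assumption \eqref{R4}, using the strong $C^1$-convergence of $\eta_\tau$ and the weak convergence of $\partial_t\eta_\tau$. For the elastic term, weak $W^{2,p}$-convergence alone is insufficient, so I would exploit the Minty-type structure \eqref{E6}: inserting $(\eta_\tau-\eta)\psi$ with $\psi \in C_c^\infty(Q;[0,1])$ into the (delayed) inequality and observing that the inertial and dissipative contributions of this perturbation vanish in the limit --- by the velocity bounds and \eqref{R4} --- forces $\liminf_\tau (DE(\eta_\tau)-DE(\eta))\langle(\eta_\tau-\eta)\psi\rangle \le 0$, whence the Minty property in \eqref{E6} gives $\eta_\tau \to \eta$ strongly in $W^{2,p}(Q;\RR^n)$ and so $DE(\eta_\tau) \to DE(\eta)$ by \eqref{E5}. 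I expect the genuinely hard point to be the contact term: one must show that the weak-$*$ limit $\sigma$ is still supported on $C_\eta$, still satisfies $\sigma = \abs{\sigma}\,n_\eta$, and still obeys the action--reaction identity, even though the contact set depends discontinuously on $\eta$ and the measures $\sigma_\tau$ may concentrate. This is exactly the compactness statement for contact forces announced in the introduction and established in \Cref{Contact-sec}; the enabling fact is once more the strong $C^1$-convergence $\eta_\tau \to \eta$ (ultimately a consequence of the second-gradient regularization), since it makes the deformed interior normals $n_{\eta_\tau}$ and the geometry underlying the Ciarlet--Ne\v{c}as constraint converge uniformly, so that the structural constraints on $\sigma_\tau$ survive the weak-$*$ convergence of measures.

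Finally, assembling these limits one recovers the variational inequality \eqref{var-ineq-def} and the contact-force identity of \Cref{var-in-def}. Admissibility $\eta(t) \in \E$ for every $t$ is stable under the uniform $C^1$-convergence --- so $\det\nabla\eta > 0$, the boundary condition on $\Gamma$, and the Ciarlet--Ne\v{c}as equality all pass to the limit --- and $\eta(0) = \eta_0$ holds because $\eta_\tau(0) = \eta_0$ and $\eta_\tau \to \eta$ in $C([0,T];C^1(\overline{Q}))$. The energy inequality for $\eta$ follows from the discrete energy inequality by weak lower semicontinuity, using \eqref{E4} for $E$, \eqref{R1} together with Fatou's lemma for the dissipation integral, and weak lower semicontinuity of the $L^2$-norm for the kinetic term, first along the discrete time levels and then for almost every $t$ by a Lebesgue-point argument.
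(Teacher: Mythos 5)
Your proposal captures the paper's overall architecture faithfully: iterated time-delayed problems supplied by \Cref{Aux-sec}, uniform energy estimates, Aubin--Lions compactness, the Minty argument \eqref{E6} for $DE$, the compactness--closure theorem for contact forces from \Cref{Contact-sec}, and weak lower semicontinuity for the energy inequality. All of these are the right ingredients in the right order. However, there is one structural ingredient missing that the proof genuinely needs, and one step where the sketch glosses over the resulting technical obligation.

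The missing ingredient is the \emph{regularization of the energy--dissipation pair}. The auxiliary results in \Cref{Aux-sec} are not stated for $(E,R)$ satisfying \eqref{E1}--\eqref{E6}, \eqref{R1}--\eqref{R4}; they are stated for a pair satisfying \eqref{A1}--\eqref{A5}, \eqref{B1}--\eqref{B4}, which are coercivity, lower semicontinuity, and differentiability hypotheses posed on $W^{k_0,2}(Q;\RR^n)$ with $k_0$ large enough that $W^{k_0,2}$ embeds compactly into $W^{2,p}$. A physical $E$ with only $W^{2,p}$ control does not satisfy \eqref{A3} or the $W^{k_0,2}$-weak continuity in \eqref{A5}, so the minimizing-movements scheme of \Cref{AuxExistence} does not apply to it directly. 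The paper bridges this by passing to the regularized pair
\[
E^{(h)}(\eta) = E(\eta) + h^{a_0}\,\|\nabla^{k_0}\eta\|_{L^2}^2, \qquad R^{(h)}(\eta,b) = R(\eta,b) + h\,\|\nabla^{k_0} b\|_{L^2}^2,
\]
with $a_0$ and $k_0$ chosen carefully, together with a regularized initial datum $\eta_0^{(h)}\in W^{k_0,2}$ obtained by pushing $\eta_0$ slightly along the almost normal and then mollifying, arranged so that $E^{(h)}(\eta_0^{(h)})\to E(\eta_0)$. Without this device, the inductive step ``apply \Cref{Aux-sec}'' cannot be executed, and the remainder of the proposal rests on a false premise.

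Once the regularization is in place, the limit passage you sketch also becomes more delicate than stated. In the Minty step you propose testing with $(\eta_\tau - \eta)\psi$; but in the regularized framework the test functions must live in $W^{k_0,2}$, and the limit $\eta$ is only in $W^{2,p}$. The paper therefore tests with $(\eta^{(h)} - \tilde\eta^{(h)})\psi$ where $\tilde\eta^{(h)}$ is a scale-$h^{b_1}$ mollification of $\eta$, and then has to check that the extra regularization terms $h^{a_0}\langle\nabla^{k_0}\eta^{(h)},\nabla^{k_0}(\cdot)\rangle$ and $h\langle\nabla^{k_0}\partial_t\eta^{(h)},\nabla^{k_0}(\cdot)\rangle$ vanish in the limit, which forces the compatibility constraints $a_0 < 1$ and $a_0 > b_1(2-k_0)$ on the exponents. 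This bookkeeping is essential to making the Minty argument close, and is absent from your proposal. Finally, a smaller point: you say ``the equation itself controls $\|\sigma_\tau\|_{L^2((0,T);M)}$'', but a naive duality estimate from the equation only gives a bound in a negative Sobolev space (which is precisely the obstruction in \cite{kromerQuasistaticViscoelasticitySelfcontact2019}); the paper obtains the measure bound by testing with the almost normal $\tilde n_{\eta^{(h)}}$ and exploiting the sign and directionality structure of $\sigma_\tau$, which is the content of \Cref{existence-CF-quasi} and Step~6.
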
 

Let us now give a brief description of the method used in the proof. Following the approach developed in \cite{benesovaVariationalApproachHyperbolic2020}, our goal will be to approximate solutions to the original problem with solutions to suitably defined initial value problems for equations of first order, gradient flow type. Thus, we begin by considering a version of \eqref{eq:strongSolution} where the inertial term is replaced by a time discretization. To be precise, for a fixed $h > 0$, we consider the problem
\begin{equation}
\label{time-delayed}
\rho\frac{\partial_t \eta^{(h)}(t) - \partial_t \eta^{(h)}(t - h)}{h} + DE(\eta^{(h)}(t)) + D_2R(\eta^{(h)}(t), \partial_t\eta^{(h)}(t)) = f(t)+\sigma^{(h)}(t),
\end{equation}
complemented by an initial condition in the form of $\eta^{(h)}(0) = \eta_0$. We begin by finding a solution to \eqref{time-delayed} in the interval $[0, h]$, under the assumption that $\partial_t \eta^{(h)}$ is known for $t < 0$ (to be precise, one can assume that $\partial_t \eta^{(h)} = \eta^*$ on $[- h, 0]$). This way, the term $\partial_t \eta^{(h)}(t - h)$ on the left-hand side of \eqref{time-delayed} can be regarded as a forcing term, and the standard machinery of minimizing movements (see \cite{MR1260440}) yields a solution $\eta^{(h)}$ defined on $[0, h]$. We then consider \eqref{time-delayed} on the interval $[h, 2h]$, again with the understanding that the time-shifted time derivative $\partial_t \eta^{(h)}(t - h)$ should not be regarded as part of the solution, but as a known term, in this case given by the solution found in the previous step. Iterating this procedure leads to a piecewise-defined function, still denoted by $\eta^{(h)}$, defined on the whole time interval $[0, T]$. A solution to the original problem can then be obtained by passing to the limit with $h \to 0^+$. This delicate limiting process is explained in detail in \Cref{inertia-sec}.

It is worth noting that this method allows us to derive a corresponding existence result (including contact forces) for the quasistatic case (see \Cref{KR-recovery}). In particular, in this paper we solve the two open problems formulated in Remarks 3.2 and 3.3 in \cite{kromerQuasistaticViscoelasticitySelfcontact2019}.

\subsection{Outlook and future research directions}
The flexibility of the methods suggests that the results obtained can be generalized to different situations. The list below includes some of the directions that we plan to investigate in the future.
 
 \begin{itemize}
  \item \textbf{Irregular domains:} To simplify proofs and discussion, we have focused on domains that are somewhat regular, i.e.\ per construction the normals vary continuously and there are no edges or corners. In order to deal with more complex geometries, more care needs to be taken to study admissible directions for the contact force. In the static case (see \cite{MR3990255} and \cite{schurichtVariationalApproachContact2002}), this difficulty has been dealt with via the use of the Clarke-subdifferential \cite{clarkeBook}; it would be interesting to extend these results to the fully dynamic case as well.
  \item \textbf{Friction:} Similarly, we have simplified the situation by ignoring the effects of friction. In particular, dynamic friction could be of interest here, as it is purely dissipative as well as dependent on the contact force itself. It would thus perfectly fit into the framework presented.
  \item \textbf{Homogenization:} A common explanation of friction is via microscopic irregularities in the surface. As our result allows us to consider the effect of contact between macroscopic irregularities, it should be worth studying the resulting limit regimes when the scale of these irregularities is sent to zero.
  \item \textbf{Fracture mechanics:} A longstanding, active topic in solid mechanics is the study of fractures. While these represent the pulling apart of material and thus the opposite of collisions, they naturally result in situations where disconnected parts of the solid are close to each other. In particular in shear fractures immediate contact is to be expected. Extending our methods to this case thus seems like a natural target for future research. We mention here an important contribution of Dal Maso and Larsen \cite{MR2847479}, where the authors introduce a minimizing movements scheme for the study of the wave equation on domains with (evolving) cracks.
  \item \textbf{Fluid structure interactions:} Collisions between elastic bodies rarely occur in a vacuum. Instead, in numerous physical applications the volume that separates the solids is typically filled with a fluid (for example, air). Depending on the fluid, the boundary conditions, and the regularity of all surfaces involved, the presence of a fluid can result in large changes in behavior, in some situations even entirely preventing collisions (we refer to \cite{hillairet2007} for more information and to \cite{gravinaRebound2022} for a study of rebound dynamics). When contact is theoretically possible, existence results for this kind of systems generally break at the time of first collision. Thus, it is natural to ask whether the methods presented in this paper can be used to extend solutions past that point.
\end{itemize}
 
\section{Contact forces, admissible test functions, and their convergence}\label{Contact-sec} 

Before we begin with the proof of the main theorem, we first need to complete its statement with a more precise discussion of two related concepts: contact forces and the set of admissible test functions. These and their properties will not only be crucial in what follows, but some of the considerations here should be of independent interest for proving related results. Throughout the section we let $I \subset \RR$ be a closed time interval. For consistency with the strategy outlined at the end of Subsection \ref{main-sec}, $I$ will play the role of $[0, h]$ in our study of the quasistatic problem (see \Cref{Aux-sec}) and $[0, T]$ when considering the full problem (see \Cref{inertia-sec}).

\subsection{Contact set and forces}
Let us begin by giving the definitions of contact set and contact force for a given deformation $\eta$. For the convenience of the reader, we  recall some well-known properties that will be used throughout the rest of the section.
 
\begin{definition}[Contact set]\label{def-contact-set}
\begin{enumerate}[label=(\roman*)]
\item[$(i)$] Let $\eta\in \mathcal{E}$. The \emph{(Lagrangian) contact set} of $\eta$ is defined via
  \begin{align*}
   C_\eta & \coloneqq \{x \in \overline{Q} : \eta(x) \in \partial\Omega \text{ or } \eta^{-1}(\eta(x)) \neq \{x\}  \}.
  \end{align*}
Note that $C_{\eta}$ consists of points of self-contact as well as points of contact with the boundary of the fixed domain $\Omega$.
\item[$(ii)$] Let $\eta \colon I \times \overline Q \to \RR^n$ be such that $\eta(t) \in \mathcal{E}$ for all $t \in I$, then we define its \emph{(Lagrangian) contact set} as 
\begin{equation*}
C_\eta \coloneqq \{(t,x) \in I \times \overline{Q} : x \in C_{\eta(t)} \},
\end{equation*}
where $C_{\eta(t)}$ denotes the contact set for the deformation $\eta(t, \cdot)$, defined as in $(i)$. 
\end{enumerate}
\end{definition}

The following result contains well-known structural properties of the contact set. In particular, by the regularity of $\eta$ and the Ciarlet-Ne\v{c}as condition (see \eqref{domain-def}), one can show that there are no contact points in the interior. 
\begin{lem}
\label{Ceta-basic-properties}
For $\eta \in \mathcal E$, let $C_{\eta}$ be given as in \Cref{def-contact-set}. Then $C_{\eta} \subset \partial Q$. Furthermore, for $x\in C_\eta$ we have that 
\begin{itemize}
\item[$(i)$] if $\eta(x) \in \partial\Omega$, then $\eta^{-1}(\eta(x)) = \{x\}$ and $n_{\eta}(x)$ coincides with the interior unit normal vector to $\partial \Omega$ at $\eta(x)$;
\item[$(ii)$] if $\eta^{-1}(\eta(x))\neq \{x\}$, then $\eta^{-1}(\eta(x)) = \{x, y\}$ for some $y\in \partial Q$ and $n_{\eta}(x) + n_{\eta}(y) = 0$.
\end{itemize}
\end{lem}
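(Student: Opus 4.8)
The plan is to prove the statement in three stages: first, establishing that $C_\eta \subset \partial Q$; second, handling the boundary-contact alternative $(i)$; and third, the self-contact alternative $(ii)$. Throughout I would exploit that every $\eta \in \mathcal{E}$ has a representative of class $C^{1,1-n/p}$, that $\det \nabla \eta > 0$ pointwise, and that the Ciarlet--Ne\v{c}as condition $\mathcal{L}^n(\eta(Q)) = \int_Q \det \nabla\eta\,dx$ forces $\eta$ to be injective away from a null set. The combination of continuity, positivity of the Jacobian, and the inverse function theorem means that $\eta$ is an open map and a local $C^1$-diffeomorphism on the interior $Q$; hence $\eta(Q)$ is open, $\eta$ restricted to a small ball around any interior point is a homeomorphism onto an open neighborhood, and by the area formula together with Ciarlet--Ne\v{c}as, $\eta$ is in fact injective on all of $Q$ (not merely a.e.). This last point is the standard consequence of \cite{CN87}: a.e.\ injectivity of a Sobolev map with the equality of measures upgrades, in the presence of the continuity and local invertibility here, to genuine injectivity on the open set $Q$.

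For the inclusion $C_\eta \subset \partial Q$: if $x \in Q$ is interior, then $\eta(x)$ lies in the open set $\eta(Q) \subset \Omega$, which is disjoint from $\partial\Omega$, so the first defining condition of $C_\eta$ fails; and since $\eta$ is injective on $Q$, if $\eta^{-1}(\eta(x))$ contained a point $y \neq x$ then $y$ must lie in $\partial Q$, but then one can argue using local invertibility near $x$ that a whole neighborhood of $\eta(x)$ is covered by $\eta(Q)$, and matching this against the boundary behaviour near $y$ (where $\eta$ maps the one-sided neighborhood of $y$) contradicts injectivity a.e.\ --- more cleanly, one simply notes $\eta^{-1}(\eta(x)) = \{x\}$ already follows from injectivity on $\overline{Q}$, which itself follows by a limiting/degree argument from injectivity on $Q$ plus continuity up to the boundary and the non-vanishing Jacobian. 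In either formulation the conclusion is that no interior point can be a contact point, so $C_\eta \subset \partial Q$.

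For case $(i)$: suppose $x \in C_\eta$ with $\eta(x) \in \partial\Omega$. Since $\eta(Q) \subset \Omega$ and $\eta(Q)$ is open, the image point $\eta(x)$ lies on $\partial\Omega$ and $\eta(\overline Q) \subset \overline\Omega$; I would show $\eta$ is a $C^1$ map of $\overline Q$ near $x$ that sends a one-sided neighborhood into $\overline\Omega$ and touches $\partial\Omega$ at $\eta(x)$, so the tangent spaces must align and, comparing orientations (using $\det\nabla\eta>0$ and the definition $n_\eta(x) = \cof\nabla\eta(x)n_Q(x)/|\cdots|$, which is exactly the push-forward of the interior normal), $n_\eta(x)$ equals the interior normal of $\Omega$ at $\eta(x)$. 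That $\eta^{-1}(\eta(x)) = \{x\}$ follows from the injectivity of $\eta$ on $\overline Q$ established above. For case $(ii)$: if $\eta^{-1}(\eta(x)) \neq \{x\}$, then by injectivity of $\eta$ on $Q$ all extra preimages lie in $\partial Q$; local invertibility near each preimage, together with $\eta(\overline Q)\subset\overline\Omega$ and the fact that two open images $\eta(U_x), \eta(U_y)$ of one-sided neighborhoods can only meet tangentially along $\partial(\eta(Q))$ without violating a.e.\ injectivity, forces exactly two preimages $\{x,y\}$ and forces the two one-sided images to lie on opposite sides of a common hyperplane at $\eta(x)=\eta(y)$; hence the pushed-forward interior normals are antipodal, $n_\eta(x) + n_\eta(y) = 0$. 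The main obstacle, and the step needing the most care, is the rigorous upgrade from a.e.\ injectivity (Ciarlet--Ne\v{c}as) to everywhere injectivity on $\overline Q$ and the precise geometric argument that at a contact point the local images meet exactly tangentially with exactly two sheets --- this is where the $C^1$-regularity (from $p > n$ and the second-derivative bound) and the strict positivity of the Jacobian are essential, and it is the part that genuinely uses the ``generalized second-order material'' hypothesis flagged in the introduction.
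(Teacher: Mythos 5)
The paper itself does not prove this lemma; it simply cites \cite{CN87} (Theorem 2) and \cite{PalmerHealey} (Lemma 2), so there is no internal proof to compare against. Your outline follows the standard route one finds in those references (Ciarlet--Ne\v{c}as a.e.\ injectivity $+$ inverse function theorem $+$ local geometric tangency of image patches), and the reasoning for $C_\eta \subset \partial Q$ and for part $(ii)$ is essentially correct in spirit.

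However, there is a genuine error. You repeatedly invoke ``injectivity of $\eta$ on $\overline Q$,'' claiming it follows from injectivity on the open set $Q$ by ``a limiting/degree argument,'' and then use it to conclude $\eta^{-1}(\eta(x))=\{x\}$ in case $(i)$. This is false as stated and, if true, would render the entire lemma (and the paper) vacuous: global injectivity on $\overline Q$ is exactly the absence of self-contact, which is the very phenomenon case $(ii)$ is describing (think of a horseshoe-shaped $Q$ deformed so that its two ends touch: $\eta$ is injective on the interior but not on $\overline Q$). The Ciarlet--Ne\v{c}as condition only upgrades to injectivity on the open set $Q$; nothing prevents distinct boundary points from having the same image. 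The correct argument for part $(i)$ is geometric, parallel to your argument for $(ii)$: if $\eta(x)\in\partial\Omega$ and some $y\neq x$ (necessarily in $\partial Q$ by interior injectivity) had $\eta(y)=\eta(x)$, then both $n_\eta(x)$ and $n_\eta(y)$ must equal the interior normal of $\Omega$ at that point (by the same tangency comparison you use to identify $n_\eta(x)$), so the one-sided images $\eta(B_r(x)\cap Q)$ and $\eta(B_r(y)\cap Q)$ lie on the \emph{same} side of the common tangent hyperplane and therefore overlap in a set of positive measure, contradicting a.e.\ injectivity. With that replacement, your proposal is sound; without it, case $(i)$ has a gap and the proof is internally inconsistent with case $(ii)$.
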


For a proof of \Cref{Ceta-basic-properties} we refer to Theorem 2 in \cite{CN87} (see also Lemma 2 in \cite{PalmerHealey}). The time-dependent version stated below follows from the same argument with straightforward changes. Below we use $\eta^{-1}$ to denote the inverse with respect to only the space variable, that is, $\eta^{-1}(t,\eta(t,x)) \coloneqq \{z\in \overline Q: \eta(t,z)=\eta(t,x) \}$.

\begin{lem}
\label{Ceta-t-lem}
For $\eta \colon  I  \times \overline Q \to \RR^n$ with $\eta(t) \in \mathcal{E}$ for all $t \in  I $, let $C_{\eta}$ be given as in \Cref{def-contact-set}. Then $C_{\eta} \subset I \times \partial Q$. Furthermore, for $(t, x) \in C_{\eta}$ we have that
\begin{itemize}
\item[$(i)$] if $\eta(t, x) \in \partial \Omega$, then $\eta^{-1}(t, \eta(t, x)) = \{x\}$ and $n_{\eta}(t, x)$ coincides with the interior unit normal vector to $\partial \Omega$ at $\eta(t, x)$;
\item[$(ii)$] if $\eta^{-1}(t, \eta(t, x)) \neq \{x\}$, then $\eta^{-1}(t, \eta(t, x)) = \{x, y\}$ and $n_{\eta}(t, x) + n_{\eta}(t, y) = 0$.
\end{itemize}
\end{lem}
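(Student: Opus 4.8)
The plan is to reduce the statement entirely to its static counterpart, \Cref{Ceta-basic-properties}, applied separately at each time. Indeed, by \Cref{def-contact-set}$(ii)$ the set $C_\eta$ is nothing but the union over $t \in I$ of the slices $\{t\} \times C_{\eta(t)}$, and by hypothesis $\eta(t) = \eta(t,\cdot) \in \mathcal{E}$ for every $t$. Thus there is no genuine time dependence in the assertion: $t$ plays the role of a spectator parameter, and every conclusion is obtained by invoking \Cref{Ceta-basic-properties} for the fixed deformation $\eta(t)$ and then reading off the result for the point $(t,x)$ using the conventions $n_\eta(t,x) = n_{\eta(t)}(x)$ and $\eta^{-1}(t,\eta(t,x)) = \eta(t)^{-1}(\eta(t)(x))$ (recall that, as noted just above the statement, $\eta^{-1}$ is taken in the space variable only, for frozen $t$).

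For completeness I would recall why \Cref{Ceta-basic-properties} holds, since this is ``the same argument'' referred to in the text. Fix $t$ and write $\zeta \coloneqq \eta(t) \in \mathcal{E}$. Since $p > n$, $\zeta$ has a $C^1$ representative; since $\det \nabla \zeta > 0$ on $Q$, $\zeta$ is a local diffeomorphism, hence an open map, on the interior $Q$; and the Ciarlet--Ne\v{c}as identity $\mathcal{L}^n(\zeta(Q)) = \int_Q \det\nabla\zeta\,dx$ together with the area formula forces $\zeta$ to be injective outside a Lebesgue-null set. If $\zeta$ had a contact point $x$ in the interior, openness would provide two disjoint neighborhoods (of $x$ and of a second interior preimage, or a relative one-sided neighborhood of a boundary preimage) whose images are open neighborhoods of the common value $\zeta(x)$ and therefore overlap in a set of positive measure, contradicting a.e.\ injectivity. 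Hence $C_\zeta \subset \partial Q$, and consequently $C_\eta \subset I \times \partial Q$. The same overlap argument, carried out near boundary points where $\zeta$ is a $C^1$-diffeomorphism of a relative half-ball onto a one-sided neighborhood lying on the side of $n_\zeta$, yields $(i)$ and $(ii)$: when $\zeta(x) \in \partial\Omega$, the inclusion $\zeta(Q) \subset \Omega$ and the $C^{1,\alpha}$-regularity of $\partial\Omega$ force $n_\zeta(x)$ to be the interior normal of $\partial\Omega$ at $\zeta(x)$ and leave no room for a further preimage; when $\zeta(x) \in \Omega$, any two distinct preimages must have opposite normals, and a third preimage is impossible since at most two half-spaces through a point can have pairwise disjoint interiors. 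For the details we defer to \cite{CN87, PalmerHealey}.

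I expect no real obstacle here: the only modifications relative to the cited results are notational bookkeeping — attaching the label $t$ to the deformation and to the normals — together with the (trivial) observation that the hypothesis ``$\eta(t) \in \mathcal{E}$ for all $t$'' is precisely what is needed to apply the static lemma at each time. The single point worth a word of caution, already flagged in the text, is that $\eta^{-1}$ denotes the spatial inverse for frozen $t$, so that the preimage sets appearing in $(i)$ and $(ii)$ are the fibers of $\zeta = \eta(t)$ and the two conclusions transfer verbatim.
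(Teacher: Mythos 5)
Your reduction to the static lemma \Cref{Ceta-basic-properties}, applied slice by slice in $t$ using the convention that $\eta^{-1}$ and $n_\eta$ are evaluated with the time variable frozen, is exactly the "same argument with straightforward changes" that the paper invokes, so your proposal is correct and matches the paper's approach. The recap of why \Cref{Ceta-basic-properties} itself holds (Ciarlet--Ne\v{c}as a.e.\ injectivity, the area formula, and openness of $\eta(t)$ on the interior) is accurate and consistent with the cited sources.
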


\begin{rmk}\label{normal-continuity}
Note that \eqref{E2} guarantees that the normal field $n_\eta$ inherits the continuity of $\nabla \eta$. Indeed, if $\eta \in \E$ is such that $E(\eta)\leq E_0$, then by \eqref{E2} and \eqref{E3} we have that 
\[
|(\nabla\eta)^{-1}| \le \frac{|\nabla \eta|^{n-1}}{\det \nabla \eta} \le C
\]
for some constant $C$ that depends only on $E_0$. In particular, this implies that $|(\nabla \eta)^{-T} n_Q|\geq \e_0$ for some $\e_0$ depending on $E_0$. In turn, we have that 
\[
n_\eta = \frac{\cof \nabla \eta \,n_Q}{| \cof \nabla \eta\, n_Q|} = \frac{(\nabla \eta)^{-T} n_Q}{|(\nabla \eta)^{-T} n_Q|}
\] 
belongs to $C^{0,\alpha}(\partial Q; \RR^n)$, with H\"older seminorm bounded by a constant that only depends on $E_0$. 

Additionally, for time dependent deformations, we note that if $\eta \colon I \times Q \to \Omega$ is such that $E(\eta(t)) < E_0$, then by an application of the chain rule, we see that $n_{\eta(t)}$ inherits some of the regularity of $\partial_t \nabla \eta$, e.g.\@ 
\[
\norm[L^2]{\partial_t n_{\eta(t)}} \leq C \norm[L^2]{\partial_t \nabla \eta(t)}
\]
for a constant $C$ depending only on $Q$ and $E_0$.
\end{rmk}

From this we can derive a well-known result about up-to the boundary local injectivity.
\begin{lem}
\label{IFT}
Let $\eta \in \E$ with $E(\eta)<\infty$. Then there exists a positive radius $r$ depending on $E(\eta)$ such that the restriction of $\eta$ to $B_r(x) \cap \overline{Q}$ is injective for all $x \in \overline{Q}$.
\end{lem}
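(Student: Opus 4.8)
The plan is to combine the uniform $C^{1,\alpha}$-bound coming from \eqref{E2}--\eqref{E3} with a quantitative inverse-function-theorem argument, uniformly over $x \in \overline{Q}$. Fix $E_0 \coloneqq E(\eta) < \infty$. By \eqref{E3} we have $\|\nabla^2\eta\|_{L^p} \le C(E_0)$, and since $p > n$ the embedding $W^{1,p}(Q;\RR^{n\times n}) \hookrightarrow C^{0,1-n/p}(\overline{Q};\RR^{n\times n})$ gives a modulus of continuity for $\nabla\eta$ that is controlled purely by $E_0$ and $Q$: there is $\omega(t) = L t^{1-n/p}$ with $|\nabla\eta(x_1) - \nabla\eta(x_2)| \le \omega(|x_1 - x_2|)$ for all $x_1, x_2 \in \overline{Q}$. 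Moreover, as in \Cref{normal-continuity}, \eqref{E2} and \eqref{E3} force $\det\nabla\eta \ge \e_0 > 0$ and $|(\nabla\eta)^{-1}| \le C$ uniformly on $\overline{Q}$, so $\nabla\eta(x)$ is uniformly invertible with uniformly bounded inverse.

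Next I would run the standard Lipschitz-perturbation-of-a-linear-map argument at each point. Fix $x_0 \in \overline{Q}$ and set $A \coloneqq \nabla\eta(x_0)$, so $|A^{-1}| \le C$. Choose $r$ small enough (depending only on $E_0$ and $Q$, via $\omega$ and $C$) that $\omega(2r) \le \frac{1}{2C}$; then for all $x \in B_{2r}(x_0) \cap \overline{Q}$ we have $|\nabla\eta(x) - A| \le \frac{1}{2C} \le \frac{1}{2|A^{-1}|}$. For any two points $x_1, x_2$ in the convex set $B_r(x_0) \cap \overline{Q}$ — here I use that $Q$ is $C^{1,\alpha}$, so after possibly shrinking $r$ depending on the geometry of $\partial Q$, the intersection $B_r(x_0)\cap\overline{Q}$ is contained in a convex neighborhood on which we may integrate along the segment, or alternatively one replaces segments by uniformly-short paths inside $\overline Q$ with a bounded length-distortion constant — we write
\[
\eta(x_1) - \eta(x_2) = A(x_1 - x_2) + \int_0^1 \big(\nabla\eta(x_2 + s(x_1-x_2)) - A\big)(x_1 - x_2)\,ds,
\]
hence $|\eta(x_1) - \eta(x_2)| \ge |A(x_1-x_2)| - \frac{1}{2|A^{-1}|}|x_1 - x_2| \ge \frac{1}{|A^{-1}|}|x_1-x_2| - \frac{1}{2|A^{-1}|}|x_1-x_2| = \frac{1}{2|A^{-1}|}|x_1 - x_2| > 0$ whenever $x_1 \ne x_2$. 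This shows $\eta$ is injective on $B_r(x_0)\cap\overline{Q}$, and the radius $r$ was chosen independently of $x_0$, which is exactly the claim.

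I expect the main technical nuisance — not a deep obstacle, but the one point needing genuine care — to be the passage from "$\nabla\eta$ is close to a constant matrix on a ball" to "$\eta$ is injective on that ball" when the ball meets $\partial Q$: one cannot naively integrate $\nabla\eta$ along a straight segment if the segment exits $\overline{Q}$. This is handled by the $C^{1,\alpha}$-regularity of $\partial Q$: for $r$ small (depending only on $Q$) the set $B_r(x_0)\cap\overline Q$ is, in suitable coordinates, the subgraph region of a function with small Lipschitz constant, hence any two of its points are joined by a path inside $\overline Q$ of length at most $(1+\delta)|x_1-x_2|$, and the estimate above goes through with constants adjusted by the fixed factor $(1+\delta)$. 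Everything else is a quantitative bookkeeping of constants, all ultimately depending only on $E_0 = E(\eta)$ and on the fixed domain $Q$.
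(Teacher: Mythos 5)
Your proof is correct, but it takes a genuinely different route from the paper's. The paper's argument is three lines: if $\eta(x)=\eta(y)$ with $x\neq y$, then by the Ciarlet--Ne\v{c}as structural result (\Cref{Ceta-basic-properties}) both points lie on $\partial Q$ with $n_\eta(x)=-n_\eta(y)$, hence $|n_\eta(x)-n_\eta(y)|=2$; since the H\"older seminorm of $n_\eta$ is bounded by a constant depending only on $E(\eta)$ (\Cref{normal-continuity}), this forces $|x-y|$ to be bounded below, which is exactly the claim. Your argument is instead a quantitative inverse-function-theorem computation: from \eqref{E2}--\eqref{E3} one gets a uniform modulus of continuity for $\nabla\eta$ and a uniform bound on $(\nabla\eta)^{-1}$, and then $\eta$ is shown to be bi-Lipschitz on each small ball intersected with $\overline Q$. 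Your approach is more elementary and slightly more general --- it never invokes the a.e.\ global injectivity of $\eta$ encoded in the Ciarlet--Ne\v{c}as condition, while the paper's proof depends on it through \Cref{Ceta-basic-properties} --- but the price is that you must wrestle with the boundary geometry, since one cannot naively integrate $\nabla\eta$ along a chord that exits $\overline Q$; the paper sidesteps this entirely because the normal field lives on $\partial Q$ and its H\"older bound carries all the geometric information. Two small corrections to your write-up: the parenthetical that $B_r(x_0)\cap\overline Q$ is convex should be deleted (it need not be), and once straight segments are replaced by in-domain paths of length $(1+\delta)|x_1-x_2|$, the ball on which you invoke the modulus-of-continuity estimate must be enlarged accordingly (the path can leave $B_{2r}(x_0)$) and the final lower bound becomes $\tfrac{1-\delta}{2|A^{-1}|}|x_1-x_2|$, still positive for $\delta<1$. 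Neither is a gap --- you flagged the right fix --- but the constants need the bookkeeping to be done.
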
 
\begin{proof}
Suppose that we have $x,y\in \overline{Q}$ with $\eta(x)=\eta(y)$ and $x\neq y$. Then by \Cref{Ceta-basic-properties} we have that $x,y\in\partial Q$ and $n_\eta(x)=-n_\eta(y)$, so in particular $|n_\eta(x)-n_\eta(y)|=2$. As noted in \Cref{normal-continuity}, the seminorm $|n_\eta|_{C^{0,\alpha}}$ can be bounded in terms of only $E(\eta)$. Therefore $|x-y|>2r$ with $r$ depending only on $E(\eta)$. This implies that $\eta$ must be injective on every ball of radius $r$.
\end{proof}

With this in hand, we can define contact forces as follows.

\begin{definition}[Contact force]\label{def-contact-force}
\begin{enumerate}[label=(\roman*)]
\item[$(i)$] Let $\eta \in \mathcal{E}$. Then a \emph{contact force} for $\eta$ is a vector valued measure $\sigma\in M(\partial Q;\RR^n)$ with $\supp \sigma \subset C_\eta$ and with the property that on its support $\sigma$ points in the direction of $n_{\eta}$ in the sense that there exists a non-negative measure $|\sigma| \in M^+(\partial Q)$ such that $d\sigma = n_\eta d|\sigma|$, that is,  
\begin{equation*}
\int_{\partial Q} \varphi \cdot d\sigma = \int_{\partial Q} \varphi \cdot n_\eta \, d|\sigma|
\end{equation*}
for all $\varphi \in C(\partial Q; \RR^n)$.
\item[$(ii)$] Let $\eta \colon  I  \times \partial Q \to \RR^n$ be such that $\eta(t)\in \mathcal E$ for all $t \in  I $ and assume that $\eta$ is Borel measurable when considered as a mapping from $I$ into $W^{2,p}(Q;\RR^n)$. Then a \emph{contact force} for $\eta$ is a vector valued measure $\sigma \in M( I  \times \partial Q;\RR^n)$ with $\supp \sigma \subset C_\eta$ and with the property that on its support $\sigma$ points in the direction of $n_{\eta}$ in the sense that there exists a non-negative measure $|\sigma|\in M^+( I  \times \partial Q)$ such that $d\sigma = n_\eta d|\sigma|$, that is,
\begin{equation*}
\int_{ I  \times \partial Q} \varphi \cdot d\sigma = \int_{ I \times \partial Q} \varphi \cdot n_\eta \, d|\sigma|
\end{equation*}
for all $\varphi \in C( I \times \partial Q; \RR^n)$.
\item[$(iii)$] We say that a contact force $\sigma$ \emph{satisfies the action-reaction principle at self-contact} if 
\begin{equation*}
\int_{\partial Q} (\varphi \circ \eta) \cdot d\sigma = 0
\end{equation*}
for all $\varphi \in C_c(\Omega;\RR^n)$. Similarly, for time-dependent deformations, we say that $\sigma$ \emph{satisfies the action-reaction principle at self-contact} if
\begin{equation*}
\int_{ I \times \partial Q} (\varphi \circ \eta) \cdot d\sigma = 0
\end{equation*}
for all $\varphi \in C_c( I  \times \Omega;\RR^n)$.
\end{enumerate}
\end{definition}

\begin{rmk}
By the polar decomposition of measures (see, for example, Corollary 1.29 in \cite{AFP}), every measure $\mu \in M( I \times\partial Q;\RR^n)$ can be decomposed as $d\mu = g\, d|\mu|$, where $|\mu|\in M^+( I \times\partial Q)$ is the total variation of $\mu$ and $g\colon  I \times \partial Q\to \RR^n$ is Borel measurable with $|g|\equiv 1$ everywhere on $ I \times \partial Q$. Thus, if $\supp \sigma \subset C_\eta$, the definition above says that $\mu$ is a contact force whenever this decomposition holds with $g = n_\eta$ on $ I  \times \partial Q$.
\end{rmk}

\subsection{Compactness-Closure theorems for contact forces}
In this section we investigate compactness and closure properties of contact forces. These will enable us to conclude that contact forces associated to approximate solutions will converge to the contact force of the limiting solution. We present time-independent and time-dependent versions of these results, as both will be needed throughout the rest of the paper. However, we omit the proofs for the former case since these follow from analogous (but simpler) arguments.

\begin{thm}[Compactness-Closure for contact forces]
\label{cc-cf}
Let $\{\eta_k\}_{k \in \NN} \subset \E$ be given and assume that there exist $\eta \in \E$ and a constant $E_0$, independent of $k$, such that $\eta_k \to \eta$ in $C^1(Q;\RR^n)$, $E(\eta_k) \leq E_0$ for all $k$, and $E(\eta) \le E_0$. 
For every $k$, let $\sigma_k \in M(\partial Q;\RR^n)$ be a contact force for $\eta_k$ and assume that 
\[
\sup_k \|\sigma_k\|_{M(\partial Q;\RR^n)} \le C.
\]
Then there exist a subsequence (not relabelled) and a limit measure $\sigma$ such that $\sigma_k \weakstarto \sigma$ in $M(\partial Q;\RR^n)$. Moreover, $\sigma$ is a contact force for $\eta$, and if $\sigma_k$ satisfies the action-reaction principle at self-contact for all $k$, then so does $\sigma$.
\end{thm}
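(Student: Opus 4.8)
The plan is to extract the weak-$*$ limit $\sigma$ from the uniform bound $\sup_k \|\sigma_k\|_{M(\partial Q;\RR^n)} \le C$ via the sequential Banach--Alaoglu theorem (using that $C(\partial Q;\RR^n)$ is separable, since $\partial Q$ is compact), and then to verify the three defining properties of a contact force for $\eta$ one by one: (a) $\supp\sigma \subset C_\eta$; (b) the polar decomposition $d\sigma = n_\eta\,d|\sigma|$; and (c) the action-reaction principle, if it holds for each $\sigma_k$. The crucial structural facts available are that $\eta_k \to \eta$ in $C^1$ with uniformly bounded energy, so by \Cref{normal-continuity} the normals $n_{\eta_k}$ are uniformly bounded in $C^{0,\alpha}(\partial Q;\RR^n)$ and converge uniformly to $n_\eta$, and by \Cref{IFT} all the $\eta_k$ (and $\eta$) are injective on balls of a common radius $r$.

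For (c), the passage to the limit is immediate: $\varphi\circ\eta_k \to \varphi\circ\eta$ uniformly on $\partial Q$ for every $\varphi \in C_c(\Omega;\RR^n)$ (by uniform convergence of $\eta_k$ and uniform continuity of $\varphi$ on the relevant compact set), so $\int (\varphi\circ\eta_k)\cdot d\sigma_k \to \int (\varphi\circ\eta)\cdot d\sigma$, and the left side vanishes for all $k$. For (b), the natural route is: first show $|\sigma_k| \weakto \mu$ (up to a further subsequence) for some $\mu \in M^+(\partial Q)$, then note that for $\varphi \in C(\partial Q;\RR^n)$ we have $\varphi\cdot n_{\eta_k} \to \varphi\cdot n_\eta$ uniformly, so $\langle \sigma_k,\varphi\rangle = \int \varphi\cdot n_{\eta_k}\,d|\sigma_k| \to \int \varphi\cdot n_\eta\,d\mu$; hence $d\sigma = n_\eta\,d\mu$. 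It then remains to identify $\mu = |\sigma|$, i.e. that $\mu$ is exactly the total variation; since $|n_\eta| \equiv 1$ one gets $|\sigma| \le \mu$ automatically from $d\sigma = n_\eta\,d\mu$, and the reverse inequality $\mu \le |\sigma|$ follows because $\mu = \text{weak-}*\lim |\sigma_k|$ while $|\sigma| \ge$ ... — actually this needs a small argument: one shows $\mu(\partial Q) = \lim \|\sigma_k\|_M \ge \|\sigma\|_M = |\sigma|(\partial Q)$ by lower semicontinuity, combined with $|\sigma| \le \mu$ as measures, forcing $|\sigma| = \mu$. Alternatively, and perhaps more cleanly, once $d\sigma = n_\eta\,d\mu$ with $\mu \ge 0$ and $|n_\eta|\equiv 1$, the decomposition $d\sigma = n_\eta\,d\mu$ is itself a polar-type decomposition with unit density, so by uniqueness of the polar decomposition $\mu = |\sigma|$ and $n_\eta$ is the polar vector $|\sigma|$-a.e.; this already gives (b) in the required form without needing the total-mass comparison.

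The main obstacle is property (a), the support condition $\supp\sigma \subset C_\eta$. Weak-$*$ convergence does not in general preserve supports under only a closedness hypothesis on the limiting set, and $C_{\eta_k}$ need not converge to $C_\eta$ in any naive sense — contact can appear in the limit (points $x$ with $\eta(x)\in\partial\Omega$ or $\eta^{-1}(\eta(x))\ne\{x\}$ that were not in contact along the sequence) but, more dangerously, one must rule out mass of $\sigma$ concentrating at points that are \emph{not} in $C_\eta$. The strategy here will be a localization argument: fix $x_0 \in \partial Q \setminus C_\eta$; then $\eta(x_0) \notin \partial\Omega$ and $\eta^{-1}(\eta(x_0)) = \{x_0\}$, so $\eta(x_0)$ has positive distance from $\partial\Omega$ and, using the injectivity radius $r$ from \Cref{IFT} together with a compactness argument on $\partial Q$, the preimage $\eta^{-1}(B_\delta(\eta(x_0)))$ is contained in a single small ball $B_\rho(x_0)$ for suitable $\delta,\rho > 0$. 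By $C^1$-convergence the same holds for $\eta_k$ with uniform constants for large $k$: $\eta_k(B_\rho(x_0)\cap\overline Q) \subset B_\delta(\eta(x_0))$, this set stays away from $\partial\Omega$, and $\eta_k^{-1}(\eta_k(B_\rho(x_0)\cap\overline Q)) \subset B_\rho(x_0)\cap\overline Q$ — i.e. no point of $B_\rho(x_0)\cap\partial Q$ is in self-contact or boundary contact for $\eta_k$. Hence $\sigma_k \llcorner (B_\rho(x_0)\cap\partial Q) = 0$ for all large $k$, which passes to the limit to give $\sigma \llcorner (B_\rho(x_0)\cap\partial Q) = 0$, so $x_0 \notin \supp\sigma$. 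Since $x_0 \in \partial Q\setminus C_\eta$ was arbitrary and $C_\eta$ is closed (by the $C^1$-regularity of $\eta$ and compactness of $\overline Q$), this yields $\supp\sigma \subset C_\eta$. Assembling (a), (b), (c) completes the proof; the time-dependent analogue, \Cref{cc-cf}'s counterpart, will require threading the same localization argument through the extra time variable, which is where the real bookkeeping lies but no new ideas are needed.
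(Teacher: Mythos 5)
Your proof is correct, and it relies on the same structural inputs as the paper's (proof of the time-dependent \Cref{compactness-cforce}, from which \Cref{cc-cf} is said to follow by simplification): the uniform $C^{0,\alpha}$-bound on $n_{\eta_k}$ and its uniform convergence from \Cref{normal-continuity}, and the uniform injectivity radius from \Cref{IFT}. Steps (b) and (c) are essentially identical to the paper's argument (the paper's identity \eqref{identification-proof} is exactly your ``uniform convergence of $\varphi\cdot n_{\eta_k}$ plus weak-$*$ convergence of $|\sigma_k|$'' decomposition). The one genuine divergence is your treatment of (a). The paper picks $x\in\supp\sigma$, uses weak-$*$ convergence to find $x_k\in\supp\sigma_k\subset C_{\eta_k}$ with $x_k\to x$, splits into the two contact cases, extracts subsequences (using \Cref{IFT} for the lower bound $|x_k-y_k|\geq r$), and concludes $x\in C_\eta$. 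You instead prove the contrapositive as a local statement: for $x_0\notin C_\eta$ there is a neighborhood on which $C_{\eta_k}=\emptyset$ for $k$ large, hence $\sigma_k$ vanishes there and so does $\sigma$. The two are logically equivalent and of comparable length; your version has the small advantage that it localizes cleanly (it could be used, for instance, to get quantitative estimates on how far $\supp\sigma$ can be from $\bigcup_k C_{\eta_k}$), while the paper's is marginally shorter since it avoids the extra business of tuning $\rho$ and $\delta$. One other minor point: you are more careful than the paper about identifying the weak-$*$ limit $\mu$ of $|\sigma_k|$ with the total variation $|\sigma|$; the paper simply calls that limit $|\sigma|$ from the outset and leaves the justification implicit. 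Your polar-decomposition argument (since $d\sigma=n_\eta\,d\mu$ with $|n_\eta|\equiv1$, the uniqueness of the polar decomposition forces $\mu=|\sigma|$) is the right way to make this rigorous; the alternative total-mass comparison you sketch only yields $\mu(\partial Q)\geq|\sigma|(\partial Q)$, which together with $|\sigma|\leq\mu$ as measures is not quite enough to conclude equality, so your instinct to prefer the polar-decomposition route was correct.
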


\begin{thm}[Compactness-Closure for contact forces, time-dependent]
\label{compactness-cforce}
Let $\eta_k, \eta \colon  I \times Q \to \RR^n$ be such that $\eta_k(t), \eta(t) \in \E$ for all $t$ and all $k \in \NN$ and assume that $E(\eta_k(t)) \le E_0$ for all $k$, $E(\eta(t)) \le E_0$ for some $E_0$ independent of $k$. Furthermore, assume that $\eta_k(t) \to \eta(t)$ in $C^1(Q; \RR^n)$ uniformly in $t$ for $t \in I$, that $\eta_k$ is Borel measurable in time for all $k$, and that $n_\eta\in C(I\times\partial Q;\RR^n)$.
For every $k$, let $\sigma_k \in M( I \times \partial Q;\RR^n)$ be a contact force for $\eta_k$ and assume that 
\[
\sup_k \|\sigma_k\|_{M( I \times \partial Q;\RR^n)} \leq C.
\]
Then there exist a subsequence (not relabelled) and a limit measure $\sigma$ such that $\sigma_k \weakstarto \sigma$ in $M(I \times \partial Q; \RR^n)$. Moreover, $\sigma$ is a contact force for $\eta$ and if $\sigma_k$ satisfies the action-reaction principle at self-contact for all $k$, then so does $\sigma$.
\end{thm}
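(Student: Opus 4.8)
The plan is to prove \Cref{compactness-cforce} by reducing it to the time-independent case \Cref{cc-cf} applied pointwise in $t$, together with a disintegration argument to recover the joint measure on $I \times \partial Q$. First I would extract the weakly-$*$ convergent subsequence: since $\sup_k \|\sigma_k\|_{M(I\times\partial Q;\RR^n)} \le C$ and $I \times \partial Q$ is compact, the sequential Banach--Alaoglu theorem yields a subsequence (not relabeled) and a limit $\sigma \in M(I\times\partial Q;\RR^n)$ with $\sigma_k \weakto \sigma$. It then remains to identify $\sigma$ as a contact force for $\eta$ and to propagate the action-reaction principle. For this, the key observation is that testing against functions of the form $\psi(t)\varphi(x)$, with $\psi \in C(I)$ and $\varphi \in C(\partial Q;\RR^n)$, and using the uniform $C^1$-convergence $\eta_k(t) \to \eta(t)$ (hence $n_{\eta_k}(t) \to n_{\eta(t)}$ uniformly in $t$, by \Cref{normal-continuity} together with the uniform bound $E(\eta_k(t)) \le E_0$), lets us pass to the limit in the identity $d\sigma_k = n_{\eta_k}\, d|\sigma_k|$; the assumed continuity $n_\eta \in C(I\times\partial Q;\RR^n)$ ensures the right-hand side makes sense against the weak-$*$ limit $|\sigma_k| \weakto \lambda$ (a further subsequence) in $M^+(I\times\partial Q)$.

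Concretely, the steps are: (1) pass to a subsequence so that both $\sigma_k \weakto \sigma$ and $|\sigma_k| \weakto \lambda$ in $M(I\times\partial Q)$; (2) show $d\sigma = n_\eta\, d\lambda$ by testing against $\varphi \in C(I\times\partial Q;\RR^n)$ and estimating
\[
\left| \int \varphi\cdot d\sigma_k - \int \varphi \cdot n_\eta\, d|\sigma_k| \right| = \left| \int \varphi \cdot (n_{\eta_k} - n_\eta)\, d|\sigma_k| \right| \le \|\varphi\|_\infty \, \|n_{\eta_k} - n_\eta\|_{C(I\times\partial Q)} \, C \to 0,
\]
while the first term converges to $\int \varphi\cdot d\sigma$ and the second, since $\varphi\cdot n_\eta \in C(I\times\partial Q)$, converges to $\int \varphi\cdot n_\eta\, d\lambda$; (3) conclude in particular $|\sigma| \le \lambda$, so $\sigma$ admits the representation $d\sigma = n_\eta\, d|\sigma|$ required by \Cref{def-contact-force}(ii); (4) verify $\supp\sigma \subset C_\eta$: since $\eta_k(t) \to \eta(t)$ uniformly in $C^1$, a point $(t_0,x_0) \notin C_\eta$ has, by \Cref{Ceta-t-lem} and \Cref{IFT}, a space-time neighborhood $U$ on which $\eta(t)$ stays uniformly away from both $\partial\Omega$ and from identifying any two distinct points, and this property is stable under $C^1$-perturbation (here the uniform lower bound on $\det\nabla\eta_k$ from \eqref{E2} and the uniform injectivity radius from \Cref{IFT} are essential), so $\supp\sigma_k \cap U = \emptyset$ for $k$ large, hence $\sigma(U) = 0$; (5) propagate the action-reaction principle: for $\varphi \in C_c(I\times\Omega;\RR^n)$, write $\int (\varphi\circ\eta_k)\cdot d\sigma_k = 0$ and pass to the limit using that $\varphi\circ\eta_k \to \varphi\circ\eta$ uniformly on $I\times\partial Q$ (again by uniform $C^1$, indeed uniform $C^0$, convergence) while $\sigma_k \weakto \sigma$; the product of a uniformly convergent sequence of continuous functions with a weakly-$*$ convergent sequence of measures converges to the product of the limits.

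The main obstacle I anticipate is Step (2)/(4): reconciling the two a priori unrelated weak-$*$ limits $\sigma$ and $\lambda$, and in particular ensuring that $\lambda$ is absolutely continuous with respect to $|\sigma|$ on the relevant set so that $d\sigma = n_\eta\,d|\sigma|$ holds with $|\sigma|$ itself (not merely with some larger measure $\lambda$). Since $|n_\eta| = 1$ everywhere on $\partial Q$ (by \Cref{normal-continuity}, $n_\eta$ is a genuine unit normal field, being continuous and of modulus one), the identity $d\sigma = n_\eta\,d\lambda$ forces $|\sigma| = \lambda$ as total variations, which resolves the issue cleanly — but this requires knowing $n_\eta$ is defined and of unit length $\lambda$-a.e., which is exactly where the hypothesis $n_\eta \in C(I\times\partial Q;\RR^n)$ and the uniform energy bound are used. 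A secondary technical point is the measurability of $\eta$ and of $t \mapsto \sigma_k(t,\cdot)$ needed to even speak of disintegrations if one takes the slicing route; the approach sketched above avoids slicing entirely and works directly with the joint measures, which I expect to be the cleaner path. The time-independent \Cref{cc-cf} is, as the authors note, the same argument with the $t$-variable suppressed, so no separate proof is needed.
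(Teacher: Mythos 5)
Your proposal is correct and follows essentially the same route as the paper: extract a weak-$*$ limit of the total variations $|\sigma_k|$, use the uniform convergence $n_{\eta_k}\to n_\eta$ (guaranteed by the uniform energy bound via \Cref{normal-continuity}) to identify the density of the limit, and use the uniform $C^1$-convergence together with the injectivity radius from \Cref{IFT} to show the limit is supported in $C_\eta$. The only cosmetic differences are that the paper avoids the ``reconcile $\sigma$ with $\lambda$'' step by \emph{defining} $\sigma$ via $d\sigma = n_\eta\,d|\sigma|$ and then proving $\sigma_k\weakto\sigma$, and that the paper argues the support inclusion by chasing a convergent sequence $(t_k,x_k)\in\supp\sigma_k$ rather than the contrapositive neighborhood argument you use; both are valid.
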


\begin{proof}
By \Cref{def-contact-force}, we have that $d \sigma_k = n_{\eta_k}d|\sigma_{k}|$ with $|\sigma_{k}|\in M^+( I \times \partial Q)$ and $\supp \sigma_k\subset C_{\eta_k}$. Since the sequence $\{\sigma_k\}_{k \in \NN}$ is bounded in $M(I \times \partial Q;\RR^n)$, we must have that $\{|\sigma_{k}|\}_{k \in \NN}$ is bounded in $M^+(I \times \partial Q)$. Thus, eventually extracting a subsequence (which we do not relabel), we have that $|\sigma_{k}| \weakstarto |\sigma|$ for some $|\sigma| \in M^+(I \times \partial Q)$. Let $\sigma \in M( I \times\partial Q; \RR^n)$ be defined by setting $d\sigma = n_\eta d|\sigma|$. We claim that $\sigma_k \weakstarto \sigma$ in $M( I \times \partial Q; \RR^n)$. To prove the claim, let $g \in C(I \times \partial Q; \RR^n)$ and observe that
\begin{equation}
\label{identification-proof}
\int_{ I \times \partial Q} g\cdot n_{\eta_k} \, d|\sigma_k| -\int_{I \times \partial Q} g\cdot n_\eta \, d|\sigma| = \int_{I \times \partial Q} g\cdot (n_{\eta_k}-n_\eta) \, d|\sigma_k| + \int_{I \times \partial Q} g\cdot n_\eta \, d(|\sigma_k| - |\sigma|).
\end{equation}
It is worth noting that the first term on the right-hand side of \eqref{identification-proof} is well defined since $n_{\eta_k}$ is Borel measurable by assumption and $|\sigma_k|$ is a non-negative Radon measure. Moreover, observe that 
\begin{equation*}
\left|\int_{ I \times \partial Q} g\cdot (n_{\eta_k} - n_\eta) d|\sigma_k| \right| \le \|n_{\eta_k}-n_\eta\|_{L^{\infty}} \| |\sigma_k| \|_{M^+} \le C \| n_{\eta_k} - n_\eta \|_{L^\infty} \to 0
\end{equation*}
as $k \to \infty$ since $n_{\eta_k}\to n_\eta$ uniformly on $ I \times\partial Q$. Finally, the continuity of $g \cdot n_\eta$ and the fact that $|\sigma_k| \weakstarto |\sigma|$ imply that, as $k \to \infty$, the last term on the right-hand side of \eqref{identification-proof} vanishes as well. This proves the claim.


Next, we prove that $\supp \sigma \subset C_\eta$. To this end, fix $(t,x) \in \supp\sigma$. By the weak convergence of measures, we can find $(t_k,x_k) \in \supp\sigma_k \subset C_{\eta_k}$ such that $(t_k,x_k) \to (t,x)$. There are now two possibilities: either there is a further subsequence (not relabelled) such that $\eta_k(t_k,x_k) \in \partial\Omega$, or there exist points $y_k \in \partial Q$ with $y_k \neq x_k$ such that $\eta_k(t_k,x_k) = \eta_k(t_k,y_k)$. In the first case, using the estimate
\begin{equation*}
|\eta(t,x) - \eta_k(t_k,x_k)| \leq |\eta(t,x) - \eta(t_k,x_k)| + |\eta(t_k,x_k) - \eta_k(t_k,x_k)|,
\end{equation*}
the uniform convergence of $\eta_k$ to $\eta$ and the uniform continuity of $\eta$, we get that $\eta_k(t_k,x_k) \to \eta(t,x)$. Since $\eta_k(t_k,x_k) \in \partial \Omega$ for all $k$, we must also have that $\eta(t,x) \in \partial \Omega$, and therefore $(t,x) \in C_{\eta}$. 
In the second case, in view of the fact that $E(\eta_k(t_k)) \le E_0$, \Cref{IFT} gives the existence of a minimal distance $r$ (which only depends on $E_0$) with the property that $|x_k - y_k| \ge r$.  By the compactness of $\partial Q$, eventually extracting a further subsequence, we can assume that $y_k \to y \in \partial Q$. Reasoning as above, by the uniform convergence we see that $\eta(t, x) = \eta(t, y)$. Since necessarily $x \neq y$, we conclude that $(t, x) \in C_\eta$ also in this case.  

Finally, we note that for any $\varphi \in C_c(\Omega;\RR^n)$ we have $\varphi \circ \eta_k \to \varphi \circ \eta$ uniformly. This, together with the convergence of $\sigma_k \weakstarto \sigma$, implies that the action-reaction principle continues to hold in the limit.
\end{proof}

\begin{rmk}
An added difficulty in the proof is that we do not have continuity in time, but only Borel measurability. This makes justifying that $\sigma_k \weakstarto \sigma$ not an immediate consequence of the convergence $|\sigma_k| \weakstarto |\sigma|$. However, as shown above, continuity of the limit and uniform convergence can be used to justify this convergence. While these assumptions may not seem natural at first glance, we mention here that they arise from the construction of solutions to the quasistatic problem. To be precise, we will consider approximations that are piecewise constant and uniformly bounded in time and prove that these converge uniformly to a limiting deformation that is continuous in time. 
\end{rmk}

\begin{rmk} \label{rem:lowerRegularityFriction}
 \Cref{cc-cf} and \Cref{compactness-cforce} are designed for higher order materials and the associated sense of convergence. In fact, the statements are in general not true if the convergence is not strong enough. Specifically, if there is no pointwise convergence of $\nabla \eta_k$, then the limit measure might not lie in the contact set. Additionally, it is necessary to have a strong sense of convergence for the normal vector to guarantee that the limit measure is still pointing in the normal direction. Indeed, take $Q = [0,1]^2$ and consider a sequence of deformations such that 
\[
\eta_k(0,x_2) = \eta_k(1,1-x_2) = \left(\frac{1}{k}\sin(kx_2),x_2\right)
\]
for all $x_2 \in [0,1]$. This can be done in such a way that $\eta_k$ converges to some $\eta$ with 
\[
\eta(0,x_2) = \eta(1,1-x_2) = (0,x_2)
\]
in a sense that does not imply pointwise convergence of $\nabla \eta_k$, e.g., weakly in $W^{2,2}(Q;\RR^2)$.
  
  Now, observe that every contact set $C_{\eta_k}$ contains points $x$ for which $n_{\eta_k}(x)$ points in direction $(1,1)$. In particular, we can construct contact forces $\sigma_k$ which only point in this direction and have unit mass. But then there exist a subsequence and a limit measure $\sigma$ with the same property. However, all normals associated to $C_\eta$ are of the form $(0,\pm 1)$.
  
   Yet, we also note that if we are not studying self contact, but contact between a deformable solid and an immovable obstacle, the approach presented above can be adapted to work also for lower order materials, as the obstacle's normal is fixed.
 \end{rmk}

Finally we note that if the measures $\sigma_k$ are additionally uniformly $L^2$ in time, as will be the case for the corresponding quasistatic result, then the same holds for the limit. Before stating this result, we discuss the notion of a measure being ``$L^2$ in time'', namely the space $L^2_{w^*}(I;M(\partial Q;\mathbb R^n))$.

\begin{rmk}[Definition of $L^2_{w^*}(I;M(\partial Q;\mathbb R^n))$]\label{l2weakstar-def}
We use $L^2_{w^*}(I;M(\partial Q;\mathbb R^n))$ to denote the space of all $\sigma\colon I \to M(\partial Q;\mathbb R^n)$ which are weak$^*$ measurable, satisfy 
\[
\|\sigma\|_{L^2_{w^*}(I;M(\partial Q;\mathbb R^n))}^2\coloneqq\int_I\|\sigma(t)\|^2_{M(\partial Q;\mathbb R^n)}dt<\infty,
\]
and any such two $\sigma,\tilde \sigma$ are considered to be equivalent if $\sigma(t)=\tilde\sigma(t)$ for $\mathcal{L}^1$-a.e.\@ $t \in I$. Here we recall that $\sigma$ is said to be weakly$^*$ measurable if 
\[
t \mapsto \int_{\partial Q} \varphi \cdot d\sigma(t)
\]
is measurable for all $\varphi\in C(\partial Q;\mathbb R^n)$.
\newline
Observe that any $\sigma \in L^2_{w^*}(I;M(\partial Q;\mathbb R^n))$ can be regarded as an element of $M( I \times \partial Q;\mathbb R^n)$ by setting $$\int_{ I\times \partial Q}\varphi (t,x) \cdot d\sigma(t,x) \coloneqq \int_I \int_{\partial Q} \varphi (t,x)\cdot d\sigma(t)\, dt,\quad \varphi\in C( I\times\partial Q;\mathbb R^n).$$
Conversely, if $\sigma \in M( I\times \partial Q;\mathbb R^n)$ can be represented as $$\int_{ I\times \partial Q}\varphi (t,x) \cdot d\sigma(t,x) = \int_I \int_{\partial Q} \varphi (t,x)\cdot d\nu_t \, \phi(t)\, dt,\quad \varphi\in C( I \times\partial Q;\mathbb R^n)$$
with $\phi\in L^2(I;\mathbb R)$ and $\{\nu_t\}_{t\in I}\subset M(\partial Q;\mathbb R^n)$ a bounded weakly$^*$ measurable family of measures, then one can regard $\sigma$ as belonging to $\sigma \in L^2_{w^*}(I;M(\partial Q;\mathbb R^n))$ by setting $$\sigma(t)\coloneqq \phi(t)\nu_t,$$
for $\mathcal{L}^1$-a.e.\@ $t\in I$.
\end{rmk}

\begin{lem}\label{cforce-l2intime}
Let $\sigma_k\in L_{w^*}^2( I ;M(\partial Q;\RR^n))$ with $\|\sigma_k\|_{L^2_{w^*}( I ;M(\partial Q;\RR^n))}\leq C$ be such that $\sigma_k \weakstarto \sigma$ in $M(  I \times \partial Q;\RR^n)$. Then $\sigma\in L_{w^*}^2( I ;M(\partial Q;\RR^n))$ and $\|\sigma\|_{L_{w^*}^2( I ;M(\partial Q;\RR^n))}\leq C$.
\end{lem}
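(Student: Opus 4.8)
The statement asserts that the $L^2$-in-time bound on a sequence of measure-valued contact forces passes to the weak-* limit in $M(I\times\partial Q;\RR^n)$. The natural strategy is duality: the $L^2((0,T);M(\partial Q;\RR^n))$-norm of $\sigma$ can be expressed by testing against functions $\varphi \in L^2(I;C(\partial Q;\RR^n))$ (the predual pairing), and one writes
\[
\|\sigma\|_{L^2(I;M(\partial Q;\RR^n))} = \sup\left\{ \int_I \langle \sigma(t), \varphi(t)\rangle\, dt : \varphi \in C(I\times\partial Q;\RR^n),\ \int_I \|\varphi(t)\|_{C(\partial Q)}^2\, dt \le 1 \right\},
\]
using that such $\varphi$ are dense enough to realize the norm. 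For each fixed such $\varphi$, the quantity $\int_I \langle \sigma_k(t),\varphi(t)\rangle\, dt$ converges to $\int_I \langle\sigma(t),\varphi(t)\rangle\, dt$ by the weak-* convergence $\sigma_k \weakto \sigma$ in $M(I\times\partial Q;\RR^n)$, since $\varphi \in C(I\times\partial Q;\RR^n)$ is a valid test function for that convergence. On the other hand, for each $k$, Cauchy–Schwarz in time gives
\[
\int_I \langle \sigma_k(t),\varphi(t)\rangle\, dt \le \int_I \|\sigma_k(t)\|_{M(\partial Q;\RR^n)} \|\varphi(t)\|_{C(\partial Q)}\, dt \le \|\sigma_k\|_{L^2(I;M)} \left(\int_I \|\varphi(t)\|_{C}^2\, dt\right)^{1/2} \le C.
\]
Passing to the limit in $k$ yields $\int_I \langle\sigma(t),\varphi(t)\rangle\, dt \le C$ for every admissible $\varphi$; taking the supremum over $\varphi$ gives $\|\sigma\|_{L^2(I;M(\partial Q;\RR^n))} \le C$, which in particular shows $\sigma \in L^2(I;M(\partial Q;\RR^n))$ (i.e. the disintegration $t\mapsto \sigma(t)$ exists with square-integrable total-variation density).

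**Key steps in order.** First, recall (or invoke a standard reference such as \cite{AFP}) the disintegration of $\sigma \in M(I\times\partial Q;\RR^n)$ and the identification of $L^2(I;M(\partial Q;\RR^n))$ as a subspace of $M(I\times\partial Q;\RR^n)$, characterized precisely by finiteness of the duality-defined norm above; here one should be slightly careful that the supremum is genuinely attained/approximated by \emph{continuous} $\varphi$ on $I\times\partial Q$ and not merely by bounded measurable ones — this can be handled by a standard mollification/Lusin argument, approximating a measurable near-optimal $\varphi$ by continuous functions while controlling the $L^2(I;C(\partial Q))$-norm. Second, fix such a continuous $\varphi$ and combine the Cauchy–Schwarz estimate (uniform in $k$) with the weak-* convergence to get the bound on the limit pairing. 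Third, take the supremum.

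**Main obstacle.** The only real subtlety is the duality characterization in the first step: $L^2(I;M(\partial Q;\RR^n))$ is not reflexive and its relation to $M(I\times\partial Q;\RR^n)$ requires knowing that a measure on the product decomposes into a measurable family of measures on $\partial Q$ with the time-slice total variation being the ``right'' function, and that the norm is computed by testing against $L^2(I;C(\partial Q;\RR^n))$. One must verify that continuous test functions suffice to compute (or arbitrarily well approximate) this supremum; since the measures live on the compact set $I\times\partial Q$, density of $C(I\times\partial Q;\RR^n)$ in the relevant test class follows from standard measure-theoretic approximation, so this is routine but is where the care is needed. Everything else is elementary: Cauchy–Schwarz and a limit.
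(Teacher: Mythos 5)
Your high-level plan (duality, Cauchy--Schwarz in time, pass to the limit) matches the spirit of the paper's argument, but there is a genuine gap in the step you flag as ``routine,'' and the framing you use to bridge it is actually misleading. You describe the testing against $L^2(I;C(\partial Q;\RR^n))$ as ``the predual pairing,'' but $L^2(I;M(\partial Q;\RR^n))$ is \emph{not} the dual of $L^2(I;C(\partial Q;\RR^n))$ --- the paper makes exactly this point in the remark immediately following the lemma, noting that $M(\partial Q)$ fails the Radon--Nikod\'{y}m property. Consequently, the equivalence you invoke (``$\sigma\in M(I\times\partial Q;\RR^n)$ lies in $L^2(I;M)$ with norm $\le C$ \emph{iff} the supremum of the pairing over continuous $\varphi$ with $\|\varphi\|_{L^2(I;C)}\le 1$ is $\le C$'') is the entire substance of the lemma; there is no standard reference you can point to that provides it wholesale, and you cannot obtain the ``if'' direction merely by a mollification/Lusin argument, since the hard part is showing that a finite dual bound forces the time-marginal of $\sigma$ to have an $L^2$ density in the first place.

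The paper's proof avoids this circularity by not attempting a duality characterization of the whole norm. Instead it works with the total variation $|\sigma|$ (a \emph{positive} measure), disintegrates it as $|\sigma|=\mu(dt)\otimes\nu_t$ with $\nu_t$ probability measures, and tests the weak-$\ast$ convergence only against functions of the form $\tilde g(t,x)=g(t)$ that are constant in $x$. Because $\nu_t(\partial Q)=1$, such a test function directly exposes the time-marginal $\mu$: $\int_{I\times\partial Q}\tilde g\,d|\sigma_k| = \int_I \||\sigma_k(t)|\|_M\,g(t)\,dt$ is controlled by Cauchy--Schwarz, and passing to the limit shows that $g\mapsto \int_I g\,d\mu$ is an $L^2$-bounded functional on $C(I)$, whence by Hahn--Banach $\mu$ has an $L^2$ density $\phi_\mu$. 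Finally one identifies $\phi_\mu(t)=\||\sigma(t)|\|_{M(\partial Q)}$ for a.e.~$t$. The key insight your proposal misses is precisely this restriction to $x$-constant test functions applied to the \emph{positive} measure $|\sigma|$: it is what lets one isolate the time-marginal cleanly and avoid the ill-posed predual duality entirely. To repair your proof you would need to supply the ``if'' direction of your claimed characterization, and doing so would in effect reproduce the paper's disintegration-plus-Hahn--Banach argument.
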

\begin{proof} 

By the disintegration theorem (see e.g.\@ \cite[Theorem 2.28]{AFP}) if we define $\mu\in M^+( I )$ by $\mu(A)=|\sigma|(A\times \partial Q)$ for any Borel set $A\subset  I $, we obtain a weakly$^\ast$ measurable family of measures $\{\nu_t\}_{t\in I }\subset M(\partial Q;\RR^n)$  with $\|\nu_t\|_{M(\partial Q;\RR^n)}=1$ such that $\sigma = \mu(dt)\otimes\nu_t$. More precisely, we have that
\begin{equation*}
\int_{{I} \times \partial Q} \varphi \,d\sigma = \int_{ I} \int_{\partial Q} \varphi(t,x)\,d\nu_t(x)\,d\mu(t), \quad \varphi \in C(  I \times \partial Q).
\end{equation*}
We now show that $\mu$ has $L^2$ density with respect to the Lebesgue measure on $ I $. To see this, let $g \in C( I )$ with $\|g\|_{L^2}\leq 1$. Then for all $k$ we have that
\begin{equation}
\label{dis-duality}
C\geq \| \sigma_k \|_{L_{w^*}^2(I ;M(\partial Q;\RR^n))}\geq \int_I \|  \sigma_k(t)  \|_{M(\partial Q;\RR^n)} g(t) \,dt =\int_{I \times \partial Q} \varphi \,d|\sigma_k| 
\end{equation}
where $\varphi(t,x) \coloneqq g(t)$. Since $|\sigma_k| \weakstarto |\sigma|$ in $M( I \times \partial Q)$, \eqref{dis-duality} implies that
\begin{equation}
\label{dis-duality2}
C \geq \int_{I \times \partial Q} \varphi\,d|\sigma|=\int_I\int_{\partial Q} \varphi(t,x) \,d\nu_t(x)\,d\mu(t)=\int_I g(t)\,d\mu(t).
\end{equation}
Since this is true for all $g$, \eqref{dis-duality2} shows that $\mu$ defines a linear functional on $C(I)$ which is bounded with respect to the $L^2$-norm. By the Hahn--Banach theorem, this functional admits an extension to $L^2(I)$ and can be represented by $\phi \in L^2(I)$, in the sense that \begin{equation*}
\int_I g \,d\mu = \int_I g \phi \,dt, \quad g \in C(I).
\end{equation*}
Thus, as argued at the end of \Cref{l2weakstar-def}, we see that $\sigma\in L_{w^*}^2(I;M(\partial Q;\RR^n))$. Moreover, we have  $\|\sigma\|_{L_{w^*}^2(I;M(\partial Q;\RR^n))}=\|\phi\|_{L^2(I)}\leq C$. This completes the proof.
\end{proof}

\begin{rmk}
Even the stronger assumption of $\sigma_k\in L^2( I ;M(\partial Q))$, i.e.\@ strong measurability, (which is satisfied for our approximation in \Cref{existence-CF-quasi}) does not help, as the strong measurability may be lost in the weak$^{*}$ limit. This is closely related to the fact that $L^2( I ;M(\partial Q))$ is not the dual space to $L^2( I ;C(\partial Q))$, as $M(\partial Q)$ does not satisfy the Radon-Nikod\'ym property (see e.g. \cite[Section 1.3]{banachSpacesBook}).
\end{rmk}

\subsection{Normals and almost normals}
In the core sections of the paper, it would prove convenient to use the normal field $n_{\eta}$ as a way of perturbing an admissible deformation. This will however not be allowed as normal vectors lack the needed regularity. We overcome this difficulty by introducing an ``almost'' normal field that is sufficiently regular for our purposes. 

\begin{definition}[Almost normals]
\label{AN-def}
\begin{itemize}
\item[$(i)$] Let $\eta\in \mathcal E$. Then $\tilde n_\eta\colon \overline Q \to \RR^n$ is an \emph{almost normal} to $\eta$, if \[
\tilde n_\eta(x)\cdot n_\eta(x) > \frac12 \ \forall x\in \partial Q \qquad \text{ and } \qquad |\tilde n_\eta(x)|\leq 1 \ \forall x \in\overline Q. 
\]
\item[$(ii)$] Let $\eta\in L^\infty( I ;W^{2,p}(Q;\RR^n))\cap W^{1,2}( I ;W^{1,2}(Q;\RR^n))$ be such that $E(\eta)\in L^\infty( I )$. Then a vector field $\tilde n_\eta \colon  I \times \overline Q\to \RR^n$ is called \emph{almost normal} to $\eta$, if
\[
\tilde n_\eta(t,x)\cdot n_\eta(t,x) > \frac12 \ \forall (t,x)\in  I \times \partial Q \qquad \text{ and } \qquad  |\tilde n_\eta(t,x)|\leq 1 \ \forall (t,x)\in  I \times \overline Q. 
\]
\end{itemize}
\end{definition}

Notice in particular that almost normals are defined also in the interior of $Q$. While the existence of a sufficiently regular (with respect to the space variables) almost normal $\tilde n_\eta$ is well known to specialists in the field, the precise statement used in the following and its proof are included here for the convenience of the reader. As before, we present both the time-independent and time-dependent versions, but only prove the slightly more complicated time-dependent version.

\begin{prop}\label{smooth-almost-normal-notime}
Let $\eta\in \mathcal{E}$ with $E(\eta)< \infty$. Then there exists an almost normal $\tilde n_\eta$ to $\eta$ with $\tilde n_\eta \in C^{k_0}(\overline Q;\RR^n)$ for all $k_0\in \NN$ satisfying $\| \tilde n_\eta \|_{C^{k_0}(Q;\RR^n)}\leq  C_{k_0}$, where $C_{k_0}$ depends only on $E(\eta)$ and $k_0$.
\end{prop}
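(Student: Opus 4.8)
The plan is to construct $\tilde n_\eta$ by first building a smooth vector field $N$ on a neighborhood of $\partial Q$ in $\RR^n$ that agrees closely with the normal $n_\eta$ on $\partial Q$, and then to extend and damp it into the interior and regularize it so that the two defining inequalities of an almost normal are met with uniform constants. Concretely, recall from Remark~\ref{normal-continuity} that on the sublevel set $\{E(\eta) \le E(\eta)\}$ the normal field $n_\eta$ lies in $C^{0,\alpha}(\partial Q;\RR^n)$ with H\"older seminorm controlled by $E(\eta)$, and $Q$ is a $C^{1,\alpha}$ domain. First I would extend $n_\eta$ to a neighborhood $U$ of $\partial Q$, e.g.\ via $x \mapsto n_\eta(\pi(x))$ where $\pi$ is the nearest-point projection onto $\partial Q$ (well defined and as regular as the boundary on a uniform tubular neighborhood), obtaining a field $\bar n \in C^{0,\alpha}(U;\RR^n)$ with $|\bar n| \equiv 1$ and seminorm bounded in terms of $E(\eta)$ only.

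Next I would mollify: set $N_\delta \coloneqq \bar n * \rho_\delta$ for a standard mollifier at a scale $\delta>0$ chosen small enough (depending only on the H\"older seminorm, hence only on $E(\eta)$, and on $k_0$) that $|N_\delta(x) - \bar n(x)| < \tfrac14$ for all $x$ within distance $\delta$ of $\partial Q$; this uses the uniform H\"older continuity and is where the dependence of $\delta$, and hence of the $C^{k_0}$-bound, on $E(\eta)$ enters. For such $\delta$ one has $N_\delta(x)\cdot n_\eta(x) = N_\delta(x)\cdot \bar n(x) > \tfrac34$ and $|N_\delta| \le 1$ on $\partial Q$, and $\|N_\delta\|_{C^{k_0}} \le C_{k_0}\delta^{-k_0}\|\bar n\|_{L^\infty} = C_{k_0}(E(\eta))$ by the standard mollification estimates. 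Finally I would cut off into the interior: pick $\psi \in C^\infty(\overline Q;[0,1])$ with $\psi \equiv 1$ near $\partial Q$ and $\supp\psi$ inside the tubular neighborhood, with $\|\psi\|_{C^{k_0}}$ depending only on $Q$ and $k_0$, and set $\tilde n_\eta \coloneqq \psi\, N_\delta$. Then $\tilde n_\eta \in C^{k_0}(\overline Q;\RR^n)$ with the asserted bound; on $\partial Q$, $\psi \equiv 1$ so $\tilde n_\eta \cdot n_\eta > \tfrac34 > \tfrac12$; and $|\tilde n_\eta| \le |N_\delta| \le 1$ everywhere. One small wrinkle: mollification can slightly decrease $|N_\delta|$ but never increases it beyond $1$ since $|\bar n|\le 1$, so the second condition is automatic; if one wanted $N_\delta\cdot n_\eta$ robustly above $\tfrac12$ rather than $\tfrac34$ this only relaxes the requirement, so there is room to spare.

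The main obstacle is bookkeeping the uniformity: one must verify that every constant introduced — the radius of the tubular neighborhood and the regularity of $\pi$, the mollification scale $\delta$, and the cutoff $\psi$ — depends only on $Q$, on $k_0$, and on the H\"older seminorm of $n_\eta$, which by Remark~\ref{normal-continuity} is itself controlled by $E(\eta)$. There is no genuine analytic difficulty; the argument is a routine regularization, and the only care needed is to route all estimates through the $E(\eta)$-dependent H\"older bound rather than through $\eta$ itself. The time-dependent version (Proposition stated next, not this one) would additionally need the construction to depend measurably — and with the stated $L^2$-in-time estimate on $\partial_t \tilde n_\eta$ — on $t$, for which one performs the same construction with a fixed $\delta$ and $\psi$ (uniform in $t$ since $E(\eta(t))$ is uniformly bounded) and invokes the chain-rule estimate $\|\partial_t n_{\eta(t)}\|_{L^2} \le C\|\partial_t\nabla\eta(t)\|_{L^2}$ from Remark~\ref{normal-continuity}; but for the present time-independent statement this is not needed.
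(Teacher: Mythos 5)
Your construction — extend the normal field off $\partial Q$, mollify at a scale $\delta$ chosen via the $E(\eta)$-controlled H\"older seminorm so that the dot product with $n_\eta$ stays above $1/2$ on $\partial Q$, and control the interior — is essentially the same argument as the paper's (which extends to a $\delta$-neighborhood $Q_\delta$ and convolves with $\xi_{\tilde\delta}$). Your cutoff $\psi$ is a cosmetic variant of how the interior is handled, and if anything makes the bound $|\tilde n_\eta|\le 1$ on all of $\overline Q$ more transparent. The uniformity bookkeeping (route everything through the seminorm, hence through $E(\eta)$) is also the same.

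One step as written is not correct: you appeal to the nearest-point projection $\pi$ onto $\partial Q$ being ``well defined and as regular as the boundary on a uniform tubular neighborhood.'' That is false at the regularity level in force here. The domain $Q$ is only $C^{1,\alpha}$ with $\alpha = 1-n/p < 1$, and a $C^{1,\alpha}$ hypersurface need not have positive reach; a model case is the graph of $|x|^{1+\alpha}$, for which points just above the origin have two competing nearest boundary points, so $\pi$ is multivalued arbitrarily close to $\partial Q$. Even where single-valued, $\pi$ need not be Lipschitz, so $\bar n = n_\eta\circ\pi$ need not be $C^{0,\alpha}$ with controlled seminorm. The fix is trivial and does not change the structure of your argument: replace $n_\eta\circ\pi$ by any seminorm-preserving extension of the $C^{0,\alpha}$ field $n_\eta$ from the compact set $\partial Q$ (e.g.\ componentwise McShane/Whitney extension), then truncate to enforce $|\bar n|\le 1$; this is also what the paper implicitly does when it writes ``consider an extension.'' With that substitution your proof is complete and matches the paper's.
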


\begin{prop}\label{smooth-almost-normal}
Let $\eta\in L^\infty( I ;\mathcal{E})\cap W^{1,2}( I ;W^{1,2}(Q;\RR^n))$ be such that $E(\eta(t)) \leq E_0$ for all $t\in  I $. Then there exists an almost normal $\tilde n_\eta$ to $\eta$ with $\tilde n_\eta \in L^\infty( I ;C^{k_0}(\overline Q;\RR^n))$ for all $k_0\in \NN$, satisfying $\| \tilde n_\eta \|_{L^\infty( I ;C^{k_0}(Q;\RR^n))}\leq  C_{k_0}$, where $ C_{k_0}$ depends only on $E_0$ and $k_0$. Moreover, we can have that $\tilde n_\eta\in W^{1,2}(I;L^2(Q;\mathbb R^n))$ with $\|\partial_t \tilde n_\eta\|_{L^2(I\times Q;\mathbb R^n)}\leq C \|\partial_t \nabla \eta\|_{L^2(I\times Q;\mathbb R^n)}$, where $C$ depends only on $E_0$.
\end{prop}

\begin{proof}
Reasoning as in \Cref{normal-continuity}, due to the uniform bound on $E(\eta(t))$ and the fact that $\eta \in C( I ;C^{1,\alpha}(Q;\RR^n))$, we obtain that $n_\eta\in C( I ;C^{0,\alpha}(\partial Q;\RR^n))$. Notice that we can find $\delta > 0$ such that for all $t \in I$ and all $x,\tilde{x} \in \partial Q$ it holds that 
\begin{equation*}
n_\eta(t,x)\cdot n_\eta(t,\tilde x) > 3/4
\end{equation*}
whenever $|x - \tilde{x}| < \delta$. We then consider an extension of $n_Q$ (still denoted by $n_{Q}$) to the $\delta$-neighborhood of $Q$, namely $Q_{\delta}$, such that $n_Q\in C^{0,\alpha}(Q_\delta;\RR^n)$ with 
\begin{equation*}
\|n_Q\|_{C^{0,\alpha}(Q_\delta;\RR^n)}
\leq C(\delta )\|n_Q\|_{C^{0,\alpha}(\partial Q;\RR^n)}
\end{equation*}
and with the property that $|n_Q|\equiv 1$ on $ P_\delta$, where $P_\delta$ denotes the $\delta$-neighborhood of $\partial Q$. We then consider the extension of the deformed normal $n_\eta$ to $I\times Q_\delta$ (again, still denoted by $n_\eta$) by  \[
n_\eta (t,x) = \frac{\cof \nabla \eta(t,x) \,n_Q(x)}{| \cof \nabla \eta(t,x)\, n_Q(x)|} |n_Q(x)|, \quad (x,t)\in I\times Q_\delta \text{ if } |n_Q(x)| \neq 0
\]
and by zero otherwise, where we use an extension of $\eta$ to $I\times Q_\delta$ by a standard extension operator from the fixed domain $I\times Q$, which in particular can be chosen linear and so that it preserves the norms in $L^\infty( I ;W^{2,p}(Q_\delta;\RR^n))$ and $W^{1,2}( I ;W^{1,2}(Q_\delta;\RR^n))$ up to a constant. Note that this in particular implies sufficient regularity on the extended domain, so that the previous expression is well defined as $\cof \nabla \eta(t,x)\, n_Q(x)$ is uniformly continuous and bounded away from zero on $\partial Q$.

Arguing as in \Cref{normal-continuity} we see that $n_\eta\in C(I;C^{0,\alpha}(Q_\delta;\RR^n))$ with H\"older seminorm dependent only on $E_0$. Moreover, by an application of the chain rule, we see that 
\[
\norm[L^2(Q)]{\partial_t n_{\eta}(t)} \leq C \norm[L^2(Q)]{\partial_t \nabla \eta(t)}
\]
for a constant $C$ depending only on $Q$ and $E_0$.

Choose then $\tilde \delta >0$ (possibly smaller that $\delta$, dependent only on $E_0$) so that for all $t\in I$, all $x\in \partial Q$, and all $\tilde x\in Q_\delta$ we have that
\begin{equation*}
n_\eta(t,x)\cdot n_\eta(t,\tilde x) > 1/2
\end{equation*}
whenever $|x-\tilde{x}|< \tilde \delta$. Finally, we mollify the (extended) normal field in space, that is, we set $\tilde n_\eta \coloneqq n_\eta * \xi_{\tilde \delta}$ in $ I \times \overline{Q}$, where $*$ is convolution with respect to $x$ and $\xi_{\tilde \delta}$ is the standard mollification kernel with parameter $\tilde{\delta}$. Then, for each time $t$, we have that 
\[
\|\tilde n_\eta(t)\|_{C^{k_0}(\overline Q;\RR^n)} \leq C_{k_0}\|n_\eta(t)\|_{C(\partial Q;\RR^n)},
\]
where $C_{k_0}$ depends only on $E_0$ and $k_0$, and 
\[
\norm[L^2(Q)]{\partial_t \tilde n_{\eta}(t)} \leq\norm[L^2(Q)]{\partial_t n_{\eta}(t)} \leq C \norm[L^2(Q)]{\partial_t \nabla \eta(t)}.
\]
As one can readily check, $\tilde n_\eta$ is an almost normal to $\eta$ in the sense of \Cref{AN-def}. This concludes the proof.
%
\end{proof}

\subsection{Test functions for problems involving self-contact}
We can now introduce the cone of admissible test functions for our variational inequality and provide practical characterizations that are used throughout the rest of the paper. For our purposes, we need to develop this theory for both the static and the dynamical case. We believe that these characterizations are of independent interest and thus provide the precise statements for both cases. 

In the following we use $X$ to denote a Banach space that embeds compactly into $C^{1, \alpha}(Q; \RR^n)$. Throughout the paper we apply the results of this section for $X = W^{k_0, 2}(Q; \RR^n)$ and $X = W^{2, p}(Q; \RR^n)$ (with suitable conditions on $k_0$ and $p$).

\subsubsection{Test functions for static problems}

We begin by giving a definition of the admissible test functions.

\begin{definition} \label{def-admissibleTestFunctions}
Let $\eta \in \mathcal{E}$ be a given deformation with $E(\eta) < \infty$. Then we define the corresponding cone of admissible test functions (i.e., the tangent cone to $\mathcal{E}$) as 
\begin{equation}
\label{T-def}
\Adm{\eta}{X} \coloneqq \left\{\varphi \in W^{2,p}(Q;\RR^n) \cap X : 
\forall x \in C_{\eta} \sum_{z\in\eta^{-1}(\eta(x))}\varphi(z)\cdot n_\eta(z)\geq 0, \varphi|_\Gamma = 0\right\}.
\end{equation}
In case $X = W^{2,p}(Q;\RR^n)$, we omit it in the notation.
\end{definition}

We remark here for clarity that in view of \Cref{Ceta-basic-properties}, the sum in \eqref{T-def} consists of either one (in the case of contact with $\partial \Omega$) or two (in the case of self-contact) elements. Intuitively, the inequality in the definition of $\Adm{\eta}{X}$ signifies that admissible test functions, i.e.\@ admissible perturbations of the deformation $\eta$, cannot point outside of $\Omega$ whenever $\eta$ already lies on $\partial \Omega$, and cannot further displace the deformed configuration in a way that would cause interpenetration of matter.

\begin{lem}
Let $U,V\subset \RR^n$ be open, disjoint sets with $\partial U \cap \partial V \neq \emptyset$. Let $p\in \partial U \cap \partial V$ be such that $\partial U$, $\partial V$ are $C^1$ near $p$  and let vectors $u,v\in\RR^n$ be such that \begin{equation*}
u\cdot n_U(p) + v\cdot n_V(p) \leq -\delta<0.
\end{equation*}
Then there exist $\e_0>0$ and $\rho>0$ depending on $\delta$, $|u-v|$, and the $C^{1}$ moduli of continuity of $\partial U$ and $\partial V$ (in a neighborhood of $p$), such that $B_{\rho\e}(p+\e u) \subset V +\e v$ for all $\e \in (0,\e_0)$. 
\end{lem}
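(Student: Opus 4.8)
The statement is a quantitative, local geometric fact: if two vectors $u,v$ pull apart the boundaries $\partial U$ and $\partial V$ at a common point $p$ (in the sense that the sum of the normal components is strictly negative), then after displacing $p$ by $\e u$ one still sits, with room to spare, inside the $\e v$-translate of $V$. The plan is to reduce to a flat picture by a rigid change of coordinates and then estimate directly.

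First I would normalize: translate so that $p = 0$ and rotate so that $n_V(0) = e_n$, i.e., near $0$ the set $V$ lies (locally) on the side $\{y_n > \psi_V(y')\}$ of its boundary graph, where $\psi_V$ is $C^1$ with $\psi_V(0) = 0$, $\nabla \psi_V(0) = 0$, and modulus of continuity of $\nabla \psi_V$ controlled by the hypothesis. The point $p + \e u = \e u$ has $n$-th coordinate $\e u_n = \e\, u\cdot n_V(0)$. I want to show a ball of radius $\rho\e$ around $\e u + \e v$ lies in $V$; equivalently, using that $V + \e v$ near $\e v$ is the region above the graph of $y' \mapsto \psi_V(y' - \e v') + \e v_n$, it suffices to show that every point $y$ with $|y - \e(u+v)| < \rho\e$ satisfies
\begin{equation*}
y_n > \psi_V(y' - \e v') + \e v_n.
\end{equation*}
For such $y$ we have $y_n \ge \e(u_n + v_n) - \rho\e$ and $|y' - \e v' - \e u'| < \rho\e$, so $|y' - \e v'| \le \e|u'| + \rho\e$, hence by the modulus of continuity $\omega_V$ of $\nabla\psi_V$ and $\psi_V(0)=\nabla\psi_V(0)=0$ we get $|\psi_V(y' - \e v')| \le (\e|u'| + \rho\e)\,\omega_V(\e|u'| + \rho\e)$, which is $o(\e)$ as $\e \to 0$. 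So the required inequality reduces to
\begin{equation*}
\e u_n - \rho\e > o(\e),
\end{equation*}
i.e. $u_n = u\cdot n_V(0) > 0$ up to lower order — but this is exactly where the hypothesis must enter, and it does \emph{not} by itself give $u\cdot n_V(0) > 0$.

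The resolution is to bring in $\partial U$ as well. The key observation is that $U$ and $V$ are disjoint with a common boundary point, and both boundaries are $C^1$ there; hence they are tangent at $p$ and $n_U(p) = -n_V(p) = -e_n$ (the interior normals point in opposite directions, since locally $U \subset \{y_n < \psi_U(y')\}$ must be disjoint from $V \subset \{y_n > \psi_V(y')\}$ forces $\psi_U \le \psi_V$ with equality and equal gradients at $0$). Therefore $u\cdot n_U(p) = -u_n$, and the hypothesis reads $-u_n + v_n \le -\delta$, i.e.
\begin{equation*}
u_n - v_n \ge \delta.
\end{equation*}
This is the genuine content. Now I would \emph{not} aim to land near $\e u + \e v$ but argue more carefully about which points of the ball $B_{\rho\e}(\e u)$ lie in $V + \e v$; actually the cleaner route is: translate the target by $-\e v$, so we must show $B_{\rho\e}(\e u - \e v) \subset V$, i.e. for $|z - \e(u-v)| < \rho\e$ we need $z_n > \psi_V(z')$. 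Here $z_n \ge \e(u_n - v_n) - \rho\e \ge \e\delta - \rho\e$, and $|z'| \le \e|u' - v'| + \rho\e \le \e|u-v| + \rho\e$, so $|\psi_V(z')| \le (\e|u-v| + \rho\e)\,\omega_V(\e|u-v| + \rho\e)$. Choosing first $\rho = \delta/4$ and then $\e_0$ small enough (depending on $\delta$, $|u-v|$, and $\omega_V$) that $(\e|u-v| + \rho\e)\,\omega_V(\e|u-v|+\rho\e) < \e\delta/2$ for $\e < \e_0$, we obtain $z_n > \e\delta - \e\delta/4 > \e\delta/2 > \psi_V(z')$, as desired; note $\rho$ and $\e_0$ depend only on $\delta$, $|u-v|$, and the $C^1$ modulus of $\partial V$ (the $C^1$ modulus of $\partial U$ enters only to justify $n_U(p) = -n_V(p)$, via the tangency argument, which needs the $C^1$ regularity of both at $p$).

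\textbf{Main obstacle.} The only real subtlety — and the step I would be most careful about — is the claim that $n_U(p) = -n_V(p)$, i.e. that two disjoint open sets with a common $C^1$ boundary point are necessarily internally tangent there with opposite interior normals. One must rule out the transversal configuration: if the tangent planes differed, the two ``above the graph'' regions would overlap in a small cone near $p$, contradicting $U \cap V = \emptyset$. Making this precise requires only elementary $C^1$-graph comparison near $p$, but it is where the hypothesis ``$\partial U$, $\partial V$ are $C^1$ near $p$'' is used in an essential way, and it is needed before the hypothesis $u\cdot n_U(p) + v\cdot n_V(p) \le -\delta$ can be converted into the usable inequality $u_n - v_n \ge \delta$. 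Everything after that is the routine $o(\e)$ estimate sketched above.
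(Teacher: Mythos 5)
Your proof is correct and follows essentially the same route as the paper's (very brief) sketch: both hinge on the observation that $w = u - v$ points quantitatively into $V$ — which requires the tangency fact $n_U(p) = -n_V(p)$, which you correctly identify as the crux and justify from disjointness plus $C^1$ regularity, whereas the paper uses it silently — and then invoke the $C^1$ cone/graph property of $\partial V$ at $p$ to fit a ball of radius $\rho\e$. Your explicit graph computation is just the unpacked version of the paper's ``open cone with vertex at $p$ in the direction $w$,'' and as a small bonus it shows $\rho$ and $\e_0$ depend only on $\delta$, $|u-v|$, and the modulus of $\partial V$ (the modulus of $\partial U$ is needed only to secure the tangency), which is slightly sharper than the dependence the lemma states.
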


\begin{proof} 
As the proof follows from elementary consideration, we only give a sketch of the general idea. By assumption, the vector $w=u-v$ at the point $p$ points inside $V$. Since the boundary of $V$ is of class $C^1$, there exists a non-empty open cone in $V$ with vertex at $p$ in the direction $w$. The height of the cone gives $\e_0$, and the aperture of the cone gives $\rho$. \qedhere

%
%
\end{proof}

\begin{prop}\label{bad-displacement}
Let $\eta \in \mathcal{E}$ and $\varphi\in W^{2,p}_{\Gamma}(Q;\RR^n)$. If there exists $x\in C_\eta$ with 
\begin{equation*}
-\delta\coloneqq\sum_{z\in \eta^{-1}(\eta(x))}\varphi(z)\cdot n_\eta(z) <0,
\end{equation*} then there is $\e_0>0$ depending on $\delta$, $\varphi$, and $E(\eta)$ such that $\eta +\e \varphi \notin \mathcal E$ for all $\e \in (0,\e_0)$.
\end{prop}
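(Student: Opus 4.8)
The plan is to argue by contradiction and exploit the geometric lemma just proved. Suppose $\eta + \e\varphi \in \mathcal{E}$ for a sequence $\e \to 0^+$. Since $x \in C_\eta$, there are two cases to handle, mirroring the dichotomy in \Cref{Ceta-basic-properties}: either $\eta(x) \in \partial\Omega$ (contact with the fixed obstacle), or $\eta^{-1}(\eta(x)) = \{x, y\}$ for some $y \in \partial Q$ with $y \neq x$ (self-contact). In the self-contact case, by \Cref{IFT} there is a radius $r > 0$ (depending only on $E(\eta)$) so that $\eta$ is injective on balls of radius $r$; hence $x$ and $y$ lie in disjoint relatively open neighborhoods whose $\eta$-images are small open sets $U = \eta(B_\rho(x) \cap Q)$ and $V = \eta(B_\rho(y) \cap Q)$ meeting only at the single point $\eta(x) = \eta(y)$. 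One checks that $\overline{U}$ and $\overline V$ are $C^1$ near that common point (this is exactly where the $C^{1,\alpha}$-regularity coming from $W^{2,p}$ with $p > n$, together with \eqref{E2}, is used — the deformed boundary is a $C^1$ graph with controlled modulus of continuity). The vectors $u := \varphi(x)$ and $v := \varphi(y)$ then satisfy $u \cdot n_\eta(x) + v \cdot n_\eta(y) = -\delta < 0$, where $n_\eta(x), n_\eta(y)$ are precisely the interior normals $n_U, n_V$ at the contact point by \Cref{Ceta-basic-properties}(ii).

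Next I would apply the geometric Lemma with $p = \eta(x)$, the sets $U$ and $V$, and the vectors $u, v$ above: it produces $\e_0 > 0$ and $\rho' > 0$ (depending on $\delta$, $|u - v|$, and the $C^1$ moduli, all of which are controlled by $E(\eta)$ and $\varphi$) such that $B_{\rho'\e}(\eta(x) + \e u) \subset V + \e v$ for all $\e \in (0, \e_0)$. The point is now to compare this "straight-line" displacement with the actual displaced deformation $\eta + \e\varphi$. Since $\varphi \in W^{2,p} \hookrightarrow C^1$, we have $(\eta + \e\varphi)(x) = \eta(x) + \e\varphi(x) = \eta(x) + \e u$, and similarly near $y$ the image $(\eta+\e\varphi)(B_\rho(y)\cap Q)$ is a $C^1$-small perturbation of $V + \e v$ — more precisely, a Taylor/Lipschitz estimate gives $|(\eta+\e\varphi)(z) - (\eta(z) + \e v)| \le \e|\varphi(z) - v| \le C\e\rho$ for $z \in B_\rho(y)$, so that the image covers a ball around $\eta(y) + \e v$ of radius at least $(\rho' - C\rho)\e$, which is positive once $\rho$ is chosen small enough at the outset. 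Hence the point $(\eta + \e\varphi)(x) = \eta(x) + \e u$ lies in the interior of $(\eta + \e\varphi)(B_\rho(y) \cap Q)$ for all small $\e$. This means $\eta + \e\varphi$ maps a neighborhood of $y$ onto an open set containing the image of $x$, forcing a genuine overlap: a full $n$-dimensional set of points in $\Omega$ each has at least two preimages under $\eta + \e\varphi$ (one near $x$, one near $y$). That contradicts the Ciarlet–Ne\v{c}as condition $\mathcal{L}^n((\eta+\e\varphi)(Q)) = \int_Q \det\nabla(\eta+\e\varphi)$, i.e.\ a.e.\ injectivity, built into membership in $\mathcal{E}$. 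In the obstacle case $\eta(x) \in \partial\Omega$ the argument is the same with $V = \RR^n \setminus \Omega$ and $v = 0$: the lemma gives $B_{\rho'\e}(\eta(x) + \e\varphi(x)) \subset (\RR^n\setminus\Omega)$ for small $\e$, so $(\eta+\e\varphi)(x) \notin \Omega$, violating the constraint $\eta(Q) \subset \Omega$ directly.

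The main obstacle, and the step requiring the most care, is the passage from the infinitesimal (straight-line) picture supplied by the geometric Lemma to the nonlinear deformed configuration: one must show that the $O(\e^2)$-type curvature of $\eta$ near $x$ and $y$, and the $o(\e)$ (in fact $O(\e\rho)$) error from replacing $\varphi$ by its value at the contact point, do not destroy the open overlap produced by the lemma. This is handled by first fixing $\rho$ small (so that $\varphi$ is nearly constant and $\eta$ nearly affine on $B_\rho$, using $\varphi, \eta \in C^1$), which makes the comparison errors a small fraction of the cone aperture $\rho'\e$, and only then invoking the lemma; all the resulting constants depend only on $\delta$, $\varphi$ (through its $C^1$ norm), and $E(\eta)$ (through the injectivity radius of \Cref{IFT} and the normal-field regularity of \Cref{normal-continuity}), as claimed. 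A secondary technical point is ensuring the boundary pieces $\partial U, \partial V$ near the contact point genuinely inherit a uniform $C^1$ modulus of continuity from the $C^{1,\alpha}$-bound on $\eta$, which follows since $\nabla\eta$ is invertible with controlled inverse (again by \eqref{E2}–\eqref{E3}) so that $\eta$ is a $C^1$-diffeomorphism onto its image near each such point.
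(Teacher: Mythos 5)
Your proof follows essentially the same route as the paper's: split into the obstacle and self-contact cases, invoke \Cref{IFT} and the preceding geometric lemma with $p=\eta(x)$, $u=\varphi(x)$, $v=\varphi(y)$ (thereby also quietly correcting a slip in the paper's text, which writes $u=n_\eta(x)$, $v=n_\eta(y)$), compare the straight-line displacement against the true displaced map via the $C^1$ bound on $\varphi$, and conclude that $(\eta+\e\varphi)(x)$ is covered by the image of a neighborhood of $y$, violating admissibility. The only small blemish is a slip mid-argument where you say the perturbed image covers a ball around $\eta(y)+\e v$ rather than around $p+\e u=\eta(x)+\e\varphi(x)$ as the lemma actually provides, but you state the correct conclusion in the following sentence, so the argument stands.
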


\begin{proof}
Let $x$ and $\delta>0$ be as in the statement. If $\eta(x) \in \partial\Omega$ we set $U=\eta(Q)$, $V=\RR^n\setminus\overline\Omega$, $p=\eta(x)$, $u=\varphi(x)$, and $v=0$. Recalling that the $C^1$-modulus of continuity of $\eta(\partial Q)$ depends on $E(\eta)$ (see \Cref{normal-continuity}), the previous lemma gives $\e_0>0$ such that $\eta(x)+\e \varphi(x)\notin \overline\Omega$ for all $\e\in(0,\e_0)$, which implies $(\eta+\e\varphi)(Q)\not\subset \Omega$, and therefore that $\eta+\e \varphi \notin \mathcal E$.

If this is not the case then by \Cref{Ceta-basic-properties} we have that $\eta^{-1}(\eta(x))=\{x,y\}$ for some $y \neq x$. Then by \Cref{IFT} we can find a radius $r>0$ (which depends on $E(\eta)$) such that $U=\eta(B_r(x)\cap Q)$ and $V=\eta(B_r(y)\cap Q)$ are disjoint. As before, denote $p=\eta(x)$, $u=n_\eta(x)$, $v=n_\eta(y)$ and apply the previous lemma. We thus have $\e_0>0$ and $\rho>0$ such that \begin{equation*}
B_{\rho\e}(\eta(x))+ \e \varphi(x) \subset \eta(B_r(y)\cap Q)+ \e \varphi(y)
 \end{equation*}
for all $\e\in(0,\e_0)$. Eventually replacing $r$ with a smaller number so that $|\varphi(w)-\varphi(y)|\leq \rho$ for all $w\in B_r(y)$, we get that 
\begin{equation*}
\operatorname{dist}(\eta(B_r(y)\cap Q)+ \e \varphi(y),(\eta+\e\varphi)(B_r(y)\cap Q))\leq \rho\e.
\end{equation*} 
Consequently, we have that
 \begin{equation*}
\eta(x)+ \e \varphi(x) \in (\eta+\e\varphi)(B_r(y)\cap Q).
\end{equation*}
for all $\e\in(0,\e_0)$. Since by \Cref{Ceta-basic-properties} self-contact cannot happen at an interior point, this implies that $\eta+\e\varphi\notin \E$. This concludes the proof.
\end{proof}


Equipped with this, we can derive some useful characterizations for the cone of admissible test functions.

\begin{prop}[Characterizations of $\Adm{\eta}{X}$]
\label{T-char}
Let $\eta \in \mathcal{E}\cap X$ and $\varphi\in X$. The following are equivalent:
\begin{itemize}
\item[$(i)$] $\varphi \in \Adm{\eta}{X}$.
\item[$(ii)$] There exists a sequence $\{\varphi_k\}_k$ such that $\varphi_k \to \varphi$ in $X$, $\varphi_k = 0$ on $\Gamma$, and 
\begin{align} \label{eq:equivInterior}
\sum_{z\in \eta^{-1}(\eta(x))} \varphi_k(z)\cdot n_\eta(z)>0
\end{align}
for all $k \in \NN$ and all $x\in C_\eta$. 
\item[$(iii)$] There exist a sequence of positive real numbers $\{\e_k\}_k$ and a sequence of test functions $\{\varphi_k\}_k$ with $\varphi_k \to \varphi$ in $X$ such that $\eta + \e \varphi_k \in \E$ for all $\e \in [0, \e_k)$.
\end{itemize}
Additionally, the condition
\begin{itemize}
 \item[$(iv)$] there exists a curve $\Phi \in C([0, \e_0); \E \cap X) \cap C^1([0, \e_0); X)$ such that $\Phi(0) = \eta$ and $\Phi'(0^+) = \varphi$ 
\end{itemize}
implies any of $(i)$-$(iii)$ and conversely if $\varphi$ satisfies the condition imposed on $\varphi_k$ in \eqref{eq:equivInterior}, then this implies $(iv)$.\footnote{Note that this implies that \eqref{eq:equivInterior} characterizes the interior of $\Adm{\eta}{X}$. Additionally, for a sufficiently regular $\eta$, it is not hard to prove that $(iv)$ is in fact equivalent to the other three conditions. However, this proof involves splitting $\varphi$ into its precise normal and tangential components and is thus not easily transferred to the general situation.}
\end{prop}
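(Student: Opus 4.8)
The plan is to prove the equivalence of $(i)$--$(iii)$ by a cyclic argument $(iii)\Rightarrow(i)\Rightarrow(ii)\Rightarrow(iii)$, and then to treat $(iv)$ separately at the end. The implication $(iii)\Rightarrow(i)$ is the easy direction: if $\eta+\e\varphi_k\in\E$ for all small $\e$, then for each $x\in C_\eta$ and each $\e\in(0,\e_k)$ the Ciarlet--Ne\v{c}as condition together with \Cref{Ceta-basic-properties} forces $\sum_{z\in\eta^{-1}(\eta(x))}(\eta+\e\varphi_k)(z)\cdot$(the relevant normal)$\,\geq\,$ something that, after subtracting the corresponding identity for $\eta$ and dividing by $\e$, yields $\sum_{z}\varphi_k(z)\cdot n_\eta(z)\geq 0$ (one has to be slightly careful here: the clean statement is really the contrapositive of \Cref{bad-displacement}, which says that a strictly negative sum for $\varphi_k$ rules out $\eta+\e\varphi_k\in\E$). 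Passing to the limit $\varphi_k\to\varphi$ in $X\hookrightarrow C^1$ preserves the non-strict inequality, so $\varphi\in\Adm{\eta}{X}$. Also $\varphi|_\Gamma=0$ is preserved in the limit since the trace is continuous on $X$.

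The implication $(i)\Rightarrow(ii)$ is where the real work is: given $\varphi\in\Adm{\eta}{X}$ I must produce a recovery sequence satisfying the \emph{strict} inequality \eqref{eq:equivInterior}. The natural candidate is $\varphi_k\coloneqq\varphi+\tfrac1k\,\tilde n_\eta$, where $\tilde n_\eta\in C^{k_0}(\overline Q;\RR^n)$ is a smooth almost normal supplied by \Cref{smooth-almost-normal-notime} (here one uses that $X$ contains $C^{k_0}$-regular fields, which is the point of the hypothesis $X\hookrightarrow\hookrightarrow C^{1,\alpha}$ and of the two concrete choices $X=W^{k_0,2}$ or $W^{2,p}$). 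If $\Gamma\ne\emptyset$ one must also correct the boundary values, but $\tilde n_\eta$ does not vanish on $\Gamma$ in general; the fix is to multiply by a cutoff $\psi\in C_c^\infty(Q;[0,1])$ that is $\equiv 1$ on a neighborhood of the compact set $C_\eta$ (recall $C_\eta\subset\partial Q$ and, by \Cref{IFT}, self-contact pairs are separated by a fixed distance, so $C_\eta$ stays away from $\Gamma$ after shrinking — actually $\Gamma$ and $C_\eta$ can meet, so more care is needed: one localizes near $C_\eta\setminus\Gamma$ only, and observes that on $\Gamma$ the constraint $\varphi|_\Gamma=0$ already gives the value $0$, which is harmless). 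Then for $x\in C_\eta$, $\sum_z\varphi_k(z)\cdot n_\eta(z)=\sum_z\varphi(z)\cdot n_\eta(z)+\tfrac1k\sum_z\tilde n_\eta(z)\cdot n_\eta(z)\geq 0+\tfrac1k\cdot\tfrac12\cdot(\#\text{ of preimages})>0$, using the defining property $\tilde n_\eta\cdot n_\eta>\tfrac12$ of an almost normal. Finally $\varphi_k\to\varphi$ in $X$. This is the step I expect to be the main obstacle, precisely because of the interplay between the Dirichlet boundary $\Gamma$ and the contact set, and because one must ensure the perturbation lands in $X$ with the right convergence.

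For $(ii)\Rightarrow(iii)$: each $\varphi_k$ satisfies the strict inequality in \eqref{eq:equivInterior}, and I claim this strictness, together with a quantitative/compactness argument, gives a uniform $\e_k>0$ with $\eta+\e\varphi_k\in\E$ for all $\e\in[0,\e_k)$. Indeed, by continuity of $x\mapsto\sum_{z\in\eta^{-1}(\eta(x))}\varphi_k(z)\cdot n_\eta(z)$ on the compact set $C_\eta$ (continuity here uses the $C^1$-regularity of $\eta$, the $C^{0,\alpha}$-regularity of $n_\eta$ from \Cref{normal-continuity}, and the structure of $\eta^{-1}$ from \Cref{IFT}), the sum is bounded below by some $\delta_k>0$; one then runs the argument in the proof of \Cref{bad-displacement} in reverse — or rather one checks directly that for small $\e$ the deformation $\eta+\e\varphi_k$ remains injective a.e.\ (using local injectivity from \Cref{IFT} plus the strict opening estimate on contact pairs), keeps $\det\nabla(\eta+\e\varphi_k)>0$ (since $\det\nabla\eta\geq\e_0>0$ by \eqref{E2} and $\nabla\varphi_k$ is bounded in $C^0$), keeps $\eta(Q)\subset\Omega$ (same opening estimate at $\partial\Omega$-contact), and satisfies the Ciarlet--Ne\v{c}as equality — the last because a.e.-injectivity plus $\det\nabla(\cdot)>0$ gives it for free. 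The threshold $\e_k$ depends on $\delta_k$, on $\|\varphi_k\|_{C^1}$, and on $E(\eta)$, exactly as in \Cref{bad-displacement}.

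Finally, $(iv)\Rightarrow(i)$: if such a curve $\Phi$ exists, then $\Phi(\e)\in\E$ for all small $\e$, so writing $\Phi(\e)=\eta+\e\varphi+o(\e)$ in $X\hookrightarrow C^1$ and applying (the contrapositive of) \Cref{bad-displacement} rules out a strictly negative sum at any $x\in C_\eta$, hence $\varphi\in\Adm{\eta}{X}$; the constraint $\varphi|_\Gamma=0$ follows by differentiating $\Phi(\e)|_\Gamma=\eta_0$ at $\e=0^+$. Conversely, if $\varphi$ itself satisfies the strict inequality \eqref{eq:equivInterior}, then by the $(ii)\Rightarrow(iii)$ argument (applied to the constant sequence $\varphi_k\equiv\varphi$) there is $\e_0>0$ with $\eta+\e\varphi\in\E$ for all $\e\in[0,\e_0)$, and the straight-line curve $\Phi(\e)\coloneqq\eta+\e\varphi$ is manifestly $C^1$ from $[0,\e_0)$ into $\E\cap X$ with $\Phi(0)=\eta$ and $\Phi'(0^+)=\varphi$, giving $(iv)$.
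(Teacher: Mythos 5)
Your overall architecture matches the paper's: the same cycle of implications, the same key ingredients (smooth almost normals from \Cref{smooth-almost-normal-notime}, the contrapositive of \Cref{bad-displacement}, local injectivity from \Cref{IFT}), and essentially the same verification for the two $(iv)$-related claims. However, two points deserve comment.

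First, in $(i)\Rightarrow(ii)$ you go to considerable lengths — cutoffs, worrying that $\Gamma$ and $C_\eta$ might meet — to ensure $\varphi_k|_\Gamma=0$. This is unnecessary: read condition $(ii)$ again and notice it imposes \emph{only} the strict inequality and $\varphi_k\to\varphi$ in $X$; there is no requirement that $\varphi_k$ itself lie in $\Adm{\eta}{X}$ or vanish on $\Gamma$. Hence the paper's unadorned choice $\varphi_k = \varphi + \tfrac1k\tilde n_\eta$ is already complete, and the whole cutoff discussion (which you correctly sensed is delicate and left unresolved) can simply be deleted. Your confusion here is harmless in the end, but the proposed fix is not actually needed and, as you note yourself, is not fully carried out.

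Second, the place where the paper genuinely economizes is $(ii)\Rightarrow(iii)$: the paper cites Proposition~3 of \cite{PalmerHealey} (for $X=W^{2,p}$ and self-contact) and notes that contact with $\partial\Omega$ is handled by treating the obstacle as a solid part on which $\varphi=0$. You instead sketch a direct argument. The sketch is structurally reasonable (opening estimate at contact pairs, local injectivity away from contact, $\det\nabla>0$ and the Ciarlet--Ne\v{c}as equality), but it has a gap you do not address: the $\varphi_k$ handed to you by $(ii)$ need not vanish on $\Gamma$, yet membership $\eta+\e\varphi_k\in\E$ in $(iii)$ forces $\varphi_k|_\Gamma=0$. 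So $(ii)\Rightarrow(iii)$ requires building a \emph{new} sequence (or correcting the given one near $\Gamma$ while keeping the strict opening at $C_\eta$), and this is exactly the kind of construction the paper outsources to \cite{PalmerHealey}. Finally, your derivation of $(i)+$\eqref{eq:equivInterior}$\Rightarrow(iv)$ by applying $(ii)\Rightarrow(iii)$ to the constant sequence $\varphi_k\equiv\varphi$ is a valid shortcut and is essentially what the paper does directly (here $\varphi|_\Gamma=0$ does hold since $\varphi\in\Adm{\eta}{X}$, so the $\Gamma$-issue does not arise).
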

\begin{proof}
\begin{description}
\item[$(i)\implies (ii)$]
It is enough to consider $\varphi_k \coloneqq \varphi + \frac{1}{k}\xi_{\Gamma} \tilde{n}_\eta$ where $\tilde{n}_\eta$ is the smooth almost normal to $\eta$ given by \Cref{smooth-almost-normal-notime} and $\xi_{\Gamma} \colon \overline{Q} \to \RR$ is a smooth function that vanishes on $\Gamma$ and is otherwise positive. Indeed, then for $x \in C_\eta$ we have that
\begin{equation*}
\sum_{z\in\eta^{-1}(\eta(x))} \varphi_k(z)\cdot n_\eta(z) \geq \sum_{z\in\eta^{-1}(\eta(x))} \frac{1}{k} \xi_{\Gamma}(z) \tilde n_\eta(z) \cdot n_\eta(z) \geq \frac{1}{2k} \sum_{z\in\eta^{-1}(\eta(x))} \xi_{\Gamma}(z)>0.
\end{equation*}
\item[$(ii)\implies (iii)$] This is proved in Proposition 3 in \cite{PalmerHealey} for the case of self-contact and $X=W^{2,p}$, but the argument easily extends to general $X$. The case of contact with the boundary is the same, as it can be treated like a part of the solid on which $\varphi = 0$.
\item[$(iii)\implies (i)$]
Let $\{\varphi_k\}_k$ be as in $(iii)$. We claim that $\varphi_k\in \Adm{\eta}{X}$. In fact, in view of \Cref{bad-displacement} $\varphi_k \notin \Adm{\eta}{X}$ implies $\eta+\e\varphi_k\notin\E$ for $\e$ arbitrarily small, which is a contradiction with $(iii)$. Now it is apparent that $\Adm{\eta}{X}$ is closed with respect to uniform convergence, therefore also with respect to convergence in $X$. Thus $\varphi\in\Adm{\eta}{X}$. This proves that $(i)$--$(iii)$ are equivalent. 
\item[$(iv)\implies (i)$]
To see this, let $x \in C_{\eta}$ and set 
\[
\tilde\varphi(\e, x) \coloneqq \frac{\Phi(\e, x) - \eta(x)}{\e}.
\]
We claim that we must have
\begin{equation}
\label{iv}
\lim_{\e \to 0^+} \sum_{z\in\eta^{-1}(\eta(x))} \tilde\varphi(\e, z) \cdot n_{\eta}(z) \geq 0.
\end{equation}
Notice that the limit in \eqref{iv} exists since $\Phi\in C^1([0,\e_0);X)$ and that furthermore it equals $\sum_{z\in\eta^{-1}(\eta(x))}\varphi(z) \cdot n_{\eta}(z)$. To prove the claim, arguing by contradiction, assume that there exists a number $\delta>0$ with
\[
\sum_{z\in\eta^{-1}(\eta(x))}\tilde\varphi(\e, z) \cdot n_{\eta}(z) \leq  -\delta < 0
\]
for all $\e$ small enough. Then, by \Cref{bad-displacement} we have that $\Phi(\e, \cdot) = \eta(\cdot) + \e \tilde \varphi(\e, \cdot) \notin \mathcal E$ for all $\e>0$ sufficiently small and we have thus arrived at a contradiction.

\item[$(i)+$\eqref{eq:equivInterior} $\implies(iv)$] Set $\Phi(\e,x) \coloneqq \eta(x) + \e \varphi(x)$. Then clearly $\Phi(0)= \eta$ and $\Phi'(0^+)=\varphi$. Additionally $\Phi$ has the required regularity. It is only left to check that $\Phi(\e,\cdot) \in \mathcal{E}$. For this, we note that per definition $\Phi(0) \in \mathcal{E}$ and there is $\e_0 >0$ such that $\Phi(\e,\cdot)$ is globally injective (and maps to the interior of $\Omega$) for any $\e \in (0,\e)$. 

In fact it is enough to check for injectivity at the boundary. For any points $x,y\in \partial Q$ such that $\eta(x)$ and $\eta(y)$ (resp. $\eta(x)$ and $\partial \Omega$) have a fixed minimum distance, this follows from continuity. Similarly for any points $x,y \in Q$ that are close to each other, the same follows from local injectivity (see \Cref{IFT}). This allows us to restrict our attention to pairs $(x,y)$ in an arbitrary neighborhood of the compact set $\{(\tilde{x},\tilde{y}) \in \partial Q \times \partial Q: \eta(\tilde{x}) = \eta(\tilde{y})\}$ (resp.\@ points $x$ in a neighborhood of $\{\tilde{x} \in \partial Q: \eta(\tilde{x}) \in \partial \Omega\}$). 

But then we can choose this neighborhood small enough so that the condition in \eqref{eq:equivInterior} extends to those pairs as well. Together with the definition of the normal vector, for any such pair $(x,y)$, this allows us to pick a unit vector $n$ such that $n \cdot (\eta(x)-\eta(y)) = 0$ and $(\varphi(x)-\varphi(y)) \cdot n >0$. This then implies that $\Phi(\e,x)$ and $\Phi(\e,y)$ cannot coincide. A similar result holds for $z\in \partial \Omega$ in place of $\eta(y)$.  \qedhere

\end{description}
\end{proof}

\begin{rmk} 
 Note that the approximating sequences in (ii) and (iii) are necessary. For a general $\varphi$ satisfying one of the conditions, it is not true that $\eta + \e \varphi \in \mathcal{E}$ for any $\e > 0$. Even if one excludes tangential movement, a condition like $\varphi \cdot n \geq 0$ only guarantees that the points of $C_\eta$ themselves are not displaced in a way that would violate interpenetration of matter or move outside of $\Omega$, but it cannot be used to conclude the same about any neighborhood.

 To see this in a simple example, consider the following situation. Take $\Omega$ to be the upper half-plane, $Q$ a subset of the upper half-plane such that $\partial Q$ includes $(-1,1)\times \{0\}$ and $\eta(x_1,x_2) \coloneqq (x_1,x_2 + x_1^2)$ as well as $ \varphi(x_1,x_2) \coloneqq (0,2x_1)$. Then there is contact only at $(0,0)$, where $\varphi \cdot n = 0$, but for any $\e > 0$ we have $(\eta(s,0) + \e \varphi_k(s,0))_2 < 0$ for some $s$ small enough and thus a non-superficial intersection with the boundary.
 
 However, $\varphi$ is still an admissible test function in the sense of $(i)$ and one can even construct the curve
 \begin{align*}
  \Phi(t,x) \coloneqq (x_1,x_2 + (x_1+t)^2)
 \end{align*}
 which satisfies $\Phi(0,\cdot) = \eta$ and $\tfrac{d}{dt} |_{t=0}\Phi = \varphi$ and as such is admissible in the sense of $(iv)$.
%
\end{rmk}

The next result shows that the set of admissible test functions is well-behaved with respect to sequences of approximating deformations.
\begin{prop} \label{prop:TErecoveryLocal}
Let $\{\eta_k\}_{k \in \NN} \subset \E \cap X$ be given with $E(\eta_k)$ uniformly bounded and assume that there exists $\eta \in \E \cap X$ such that $\eta_k \to \eta$ in $X$. Then, for every $\varphi \in \Adm{\eta}{X}$ there exists a sequence $\{\varphi_k\}_{k \in \NN}$ such that $\varphi_k \to \varphi$ in $X$ and with the property that $\varphi_k \in \Adm{\eta_k}{X}$ for all $k \in \NN$.
\end{prop}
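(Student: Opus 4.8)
The plan is to construct the recovery sequence $\varphi_k$ by adding to $\varphi$ a small multiple of a regularized almost-normal field for $\eta_k$, chosen so that the defect in the interpenetration inequality caused by switching from $n_\eta$ to $n_{\eta_k}$ is absorbed. First I would invoke \Cref{smooth-almost-normal-notime} to obtain, for each $k$, an almost normal $\tilde n_{\eta_k} \in C^{k_0}(\overline Q;\RR^n)$ with norm bounded uniformly in $k$ (using the uniform bound on $E(\eta_k)$), and similarly an almost normal $\tilde n_\eta$ for $\eta$. Since $\eta_k \to \eta$ in $X$ and $X$ embeds compactly into $C^{1,\alpha}$, we get $\nabla \eta_k \to \nabla \eta$ uniformly, hence $n_{\eta_k} \to n_\eta$ uniformly on $\partial Q$ (as in \Cref{normal-continuity}). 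The natural candidate is then
\[
\varphi_k \coloneqq \varphi + \delta_k \tilde n_{\eta_k},
\]
for a sequence $\delta_k \to 0^+$ to be chosen; since $\|\tilde n_{\eta_k}\|_{C^{k_0}} \le C_{k_0}$ uniformly, this guarantees $\varphi_k \to \varphi$ in $X$, and $\varphi_k|_\Gamma = \varphi|_\Gamma = 0$ (note $\tilde n$ need not vanish on $\Gamma$ — this is a wrinkle; one should instead use $\delta_k\psi \tilde n_{\eta_k}$ with $\psi$ a fixed cutoff equal to $1$ near the contact set and vanishing near $\Gamma$, which is fine because $C_\eta \cap \Gamma = \emptyset$ by the boundary conditions, or alternatively multiply by a cutoff supported away from $\Gamma$).

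The heart of the matter is checking the interpenetration inequality: for $x \in C_{\eta_k}$ we need $\sum_{z \in \eta_k^{-1}(\eta_k(x))} \varphi_k(z)\cdot n_{\eta_k}(z) \ge 0$. Here the key obstruction is that $C_{\eta_k}$ is not $C_\eta$, so the hypothesis $\varphi \in \Adm{\eta}{X}$ does not directly say anything at points of $C_{\eta_k}$. The way around this is the following quantitative argument. Fix $x \in C_{\eta_k}$ with partners $\{x\}$ or $\{x,y_k\}$. By \Cref{IFT} (applied with the uniform energy bound) the self-contact partners are separated by a fixed distance $r$, and by compactness of $\partial Q$ one can show that every point of $C_{\eta_k}$ lies within a distance $\omega_k \to 0$ of $C_\eta$ (this uses the uniform convergence $\eta_k \to \eta$ and the structure lemma: if not, one extracts a contradiction as in the proof of \Cref{compactness-cforce}). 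At the nearby point $x' \in C_\eta$ we have $\sum_{z' \in \eta^{-1}(\eta(x'))} \varphi(z')\cdot n_\eta(z') \ge 0$; then using uniform continuity of $\varphi$ (valid since $\varphi \in X \hookrightarrow C^1$) and of $n_\eta$, plus $\|n_{\eta_k} - n_\eta\|_\infty \to 0$, the sum $\sum_{z} \varphi(z)\cdot n_{\eta_k}(z)$ at the original point $x$ differs from the nonnegative quantity at $x'$ by at most some $\tau_k \to 0$ independent of $x$. Meanwhile the added term contributes $\sum_z \delta_k \tilde n_{\eta_k}(z)\cdot n_{\eta_k}(z) \ge \delta_k/2$ (by the defining property of almost normals, and since the sum has one or two terms, at least $\delta_k/2$). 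Hence choosing $\delta_k$ so that $\delta_k/2 \ge \tau_k$ (e.g. $\delta_k \coloneqq \max(2\tau_k, k^{-1})$) yields $\sum_z \varphi_k(z)\cdot n_{\eta_k}(z) \ge 0$, i.e. $\varphi_k \in \Adm{\eta_k}{X}$.

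The main obstacle, and the step deserving the most care, is establishing the uniform estimate that $\operatorname{dist}(x, C_\eta) \to 0$ uniformly over $x \in C_{\eta_k}$, together with the uniform-in-$x$ bound $\tau_k \to 0$ on the perturbation of the interpenetration sum. This is essentially a uniform version of the closure argument already used in the proof of \Cref{compactness-cforce}: one argues by contradiction, picks $x_k \in C_{\eta_k}$ with $\operatorname{dist}(x_k, C_\eta) \ge c > 0$, extracts convergent subsequences of $x_k$ (and of the partners $y_k$, using the separation from \Cref{IFT} in the self-contact case), and passes to the limit using uniform convergence of $\eta_k$ and continuity of $\eta$ to conclude the limit point lies in $C_\eta$, a contradiction. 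One subtlety is matching up $\eta_k^{-1}(\eta_k(x))$ with $\eta^{-1}(\eta(x'))$ — in the self-contact case the two partners of $x$ under $\eta_k$ converge to the two partners of $x'$ under $\eta$ (again by \Cref{IFT} ruling out coalescence and by injectivity of $\eta$ on small balls), so the two-term sums genuinely correspond. Once this uniform closeness is in hand, the rest is the routine continuity estimate sketched above, and the cutoff near $\Gamma$ handles the boundary condition. I would also remark that this proposition is precisely the ingredient needed later to pass admissible test functions through the limit in the variational inequality.
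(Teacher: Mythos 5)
Your proposal is correct and follows essentially the same route as the paper: both proofs perturb $\varphi$ by a small multiple of a smooth almost normal so that the interpenetration inequality becomes strict, and then transfer it from $C_\eta$ to $C_{\eta_k}$ via the closure-of-contact-points argument from the proof of \Cref{compactness-cforce} together with the uniform convergence $n_{\eta_k}\to n_\eta$. The only organizational difference is that the paper proceeds via \Cref{T-char}~$(ii)$: it approximates $\varphi$ by $\varphi_l=\varphi+\tfrac1l\tilde n_\eta$, shows by a fixed-$l$ compactness/contradiction argument that $\varphi_l\in\Adm{\eta_k}{X}$ once $k$ is large, and diagonalizes in $(l,k)$; your version instead folds that compactness argument into a uniform modulus $\tau_k\to 0$ and chooses $\delta_k\ge 2\tau_k$ directly, using $\tilde n_{\eta_k}$ in place of $\tilde n_\eta$ (either works, since $n_{\eta_k}\to n_\eta$ uniformly). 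The $\Gamma$-wrinkle you flag---that the almost normal need not vanish on $\Gamma$---is real, but it is equally present in the paper's own argument and is harmless once one multiplies the almost normal by a fixed cutoff vanishing near $\Gamma$; you were right to note it.
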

\begin{proof}

By \Cref{T-char} $(ii)$, we can approximate $\varphi$ by a sequence of $\{\varphi_l\}_{l \in \NN}$ for which \eqref{eq:equivInterior} holds. Additionally, as we have seen in the proof of \Cref{compactness-cforce}, any converging sequence of contact points $\{x_k\}_k$ with $x_k \in C_{\eta_k}$ has to converge to a contact point $x \in C_\eta$. But then by the fact that $C_\eta$ is compact, $n_{\eta_k}$ converges uniformly, and since each element of $\{\varphi_l\}_{l \in \NN}$ is continuous, we have that for fixed $l$, \eqref{eq:equivInterior} holds for all $\eta_k$ with $k$ large enough. In particular, we can use this to pick a non-decreasing sequence $l_k$ such that $\varphi_{l_k} \in \Adm{\eta_k}{X}$.
\end{proof}

\subsubsection{Test functions for evolutionary problems}
Throughout this section, let
\begin{equation}\label{eta-time-dependent}
\eta\in L^\infty(I;\E \cap X) \cap W^{1,2}(I^{\circ};W^{1,2}(Q;\RR^n)) \quad \text{with} \quad E(\eta(t))\leq E_0 \text{ for all } t \in I.
\end{equation}
Note that this matches exactly the regularity that we ask for solutions to our time-dependent problem (see \Cref{var-in-def}). Recall that by the Aubin-Lions lemma $\eta$ admits a representative in $C(I; C^{1, \alpha}(Q; \RR^n))$. In the following, we always work with this representative without further notice.

Our aim here is to present the time-dependent versions of the results in the previous section. They follow along the same lines, therefore we will be brief and only indicate the differences to the static versions.

\begin{definition}\label{def-admissibleTestFunctions-timedep}
Let $\eta$ be given as in \eqref{eta-time-dependent}. By a slight abuse of notation, we denote
\begin{equation*}
C(I;\Adm{\eta}{X}) \coloneqq \{ \varphi \in C(I;W^{2,p}(Q;\RR^n) \cap X): \varphi(t) \in \Adm{\eta(t)}{X}, t\in I \},
\end{equation*}
where $\Adm{\eta(t)}{X}$ is understood as in the time-independent \Cref{def-admissibleTestFunctions}.
\end{definition}

\begin{lem}
\label{bad-displacement-timedep}
Let $\eta$ be given as in \eqref{eta-time-dependent} and let $\varphi \in C(I;X)$ be such that $\varphi(t, \cdot) = 0$ on $\Gamma$. If there exists $(t,x)\in C_\eta$ with\footnote{As before, $\eta^{-1}$ denotes the preimage of $\eta(t,\cdot)$.}
\begin{equation*}
-\delta\coloneqq \sum_{z\in\eta^{-1}(t,\eta(t,x))}\varphi(t,z)\cdot n_\eta(t,z)<0,
\end{equation*}
then there is $\e_0>0$ depending on $\delta, \varphi$ and $E_0$ such that $\eta(t)+\e\varphi(t)\notin \E$ for all $\e\in(0,\e_0)$.

\end{lem}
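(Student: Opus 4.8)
The plan is to reduce \Cref{bad-displacement-timedep} to its time-independent analogue, \Cref{bad-displacement}, applied at the fixed time $t$ supplied by the hypothesis. Since $\eta$ admits a representative in $C(I;C^{1,\alpha}(Q;\RR^n))$, for the given $t$ we have $\eta(t)\in \E$; moreover $E(\eta(t))\leq E_0$ by \eqref{eta-time-dependent}, and $\varphi(t)\in X \subset W^{2,p}(Q;\RR^n)$ since $\varphi\in C(I;X)$. Likewise, the contact point condition $(t,x)\in C_\eta$ means precisely that $x\in C_{\eta(t)}$, and the displacement sum appearing in the hypothesis,
\[
\sum_{z\in\eta^{-1}(t,\eta(t,x))}\varphi(t,z)\cdot n_\eta(t,z) = -\delta < 0,
\]
is exactly the quantity $\sum_{z\in\eta(t)^{-1}(\eta(t)(x))}\varphi(t)(z)\cdot n_{\eta(t)}(z)$ that enters \Cref{bad-displacement} for the deformation $\eta(t)$ and the perturbation $\varphi(t)$.

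Thus I would simply invoke \Cref{bad-displacement} with the deformation $\eta(t)$, the test function $\varphi(t)$, and the contact point $x$: it produces an $\e_0>0$, depending on $\delta$, on $\varphi(t)$, and on $E(\eta(t))$, such that $\eta(t)+\e\varphi(t)\notin\E$ for all $\e\in(0,\e_0)$. This is exactly the conclusion sought. The only point requiring a small remark is the claimed dependence of $\e_0$: in \Cref{bad-displacement} the radius $r$ from \Cref{IFT} and the $C^1$-modulus of $\eta(t)(\partial Q)$ depend only on $E(\eta(t))$, hence only on $E_0$ in view of the uniform bound $E(\eta(t))\leq E_0$; and the dependence on $\varphi(t)$ can be bounded in terms of $\|\varphi\|_{C(I;X)}$ together with the compact embedding $X\hookrightarrow C^{1,\alpha}(Q;\RR^n)$ (which controls $\|\varphi(t)\|_{C^1}$ and the modulus of continuity of $\varphi(t)$ uniformly in $t$). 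So the stated dependence ``$\e_0$ depends on $\delta,\varphi$ and $E_0$'' is legitimate.

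There is essentially no genuine obstacle here; the statement is a direct transcription of \Cref{bad-displacement} to the time-sliced setting, and the proof is one line plus the bookkeeping above. The only thing to be careful about is notational: one must make explicit that $\eta^{-1}(t,\cdot)$ denotes the spatial preimage of $\eta(t,\cdot)$ (already flagged in the footnote), that $C_\eta$ for a time-dependent map is the union of time slices of $C_{\eta(t)}$ (\Cref{def-contact-set}$(ii)$), and that the representative of $\eta$ we use is the continuous-in-time one furnished by Aubin--Lions, so that ``$\eta(t)$'' is well-defined as an element of $\E\cap X$ for the particular $t$ in question. With these identifications in place, \Cref{bad-displacement} applies verbatim and the proof is complete.
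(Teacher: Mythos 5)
Your proof is correct and is exactly the paper's approach: the paper simply remarks that the lemma ``is a straightforward consequence of \Cref{bad-displacement},'' and your fix-the-time-slice argument with the bookkeeping on the dependence of $\e_0$ (via $E(\eta(t))\le E_0$ and $\varphi\in C(I;X)$) is the intended one-line reduction.
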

This is a straightforward consequence of \Cref{bad-displacement}. Therefore, we can formulate the time-dependent version of the various  characterizations of our test functions.
\begin{prop}[Characterizations of $C(I;\Adm{\eta}{X})$]
\label{time-TF}
The following are equivalent:
\begin{itemize}
\item[$(i)$] $\varphi \in C(I;\Adm{\eta}{X})$.
\item[$(ii)$] There exists a sequence $\{\varphi_k\}_k$ with $\varphi_k \to \varphi$ in $C(I;X)$ such that $\varphi(t, \cdot) = 0$ on $\Gamma$, and  
\begin{equation} \label{eq:equivInteriorTime}
\sum_{z \in \eta^{-1}(t, \eta(t, x))}\varphi_k(t, z)\cdot n_{\eta}(t, z) > 0
\end{equation}
for all $(t, x) \in C_{\eta}$ and all $k \in \NN$.
\item[$(iii)$] There exist a sequence of positive real numbers $\{\e_k\}_k$ and a sequence of test functions $\{\varphi_k\}_k$ with $\varphi_k \to \varphi$ in $C(I;X)$ such that $\eta(t) + \e \varphi_k(t) \in \E$ for $\mathcal{L}^1$-a.e.\@ $t \in I$ and all $\e \in [0, \e_k)$;
\end{itemize}
Additionally the condition
\begin{itemize}
\item[$(iv)$] there exists $\Phi \in C([0, \e_0); L^{\infty}(I; \E \cap X)) \cap C^1([0, \e_0); L^\infty(I;X))$ such that $\Phi(0) = \eta$ and $\Phi'(0^+) = \varphi$
\end{itemize}
implies any of $(i)$-$(iii)$ and conversely if $\varphi$ satisfies the condition imposed on $\varphi_k$ in \eqref{eq:equivInteriorTime}, then this implies $(iv)$.
\end{prop}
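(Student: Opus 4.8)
The plan is to reduce the time-dependent Proposition \ref{time-TF} to its static counterpart Proposition \ref{T-char}, which holds pointwise in time, and then upgrade the pointwise statements to statements in $C(I;X)$ using uniform continuity in time of the relevant objects. The key preliminary observations are: by the regularity \eqref{eta-time-dependent} and Aubin--Lions, the representative of $\eta$ satisfies $\eta \in C(I;C^{1,\alpha}(Q;\RR^n))$, and by \Cref{normal-continuity} combined with the uniform bound $E(\eta(t))\le E_0$, the normal field $n_\eta$ is uniformly H\"older in space with a modulus independent of $t$; moreover, as in the proof of \Cref{compactness-cforce}, any sequence of contact points $(t_k,x_k)\in C_\eta$ has a subsequence converging to a contact point, so $C_\eta$ is closed. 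The almost normal $\tilde n_\eta\in L^\infty(I;C^{k_0})$ from \Cref{smooth-almost-normal} provides the time-dependent analogue of the perturbation used in the static $(i)\Rightarrow(ii)$.

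First I would prove $(i)\Rightarrow(ii)$: given $\varphi\in C(I;\Adm{\eta}{X})$, set $\varphi_k \coloneqq \varphi + \tfrac1k \tilde n_\eta$ with $\tilde n_\eta$ from \Cref{smooth-almost-normal}; then $\varphi_k\to\varphi$ in $C(I;X)$ because $\tilde n_\eta$ is uniformly bounded in $C^{k_0}$ and $X$ embeds continuously where needed, and for every $(t,x)\in C_\eta$ the sum in \eqref{eq:equivInteriorTime} is bounded below by $\tfrac{1}{2k}>0$ exactly as in the static case. For $(ii)\Rightarrow(iii)$ I would invoke the static \Cref{T-char} $(ii)\Rightarrow(iii)$ applied for $\mathcal{L}^1$-a.e.\ fixed $t$: the point is that the sequence $\{\varphi_k\}$ and the radii $\e_k$ produced there can be taken uniform in $t$. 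This uniformity follows from compactness of $C_\eta$ and the uniform (in $t$) H\"older bound on $n_\eta$ together with the uniform lower bound $\det\nabla\eta(t)\ge\e_0$ from \eqref{E2}, so that all the geometric constants in the construction of \cite{PalmerHealey} (which we are free to track) depend only on $E_0$ and on $\varphi_k$. For $(iii)\Rightarrow(i)$ I would use \Cref{bad-displacement-timedep}: if $\varphi_k(t)\notin\Adm{\eta(t)}{X}$ for some $t$ then $\eta(t)+\e\varphi_k(t)\notin\E$ for arbitrarily small $\e$, contradicting $(iii)$; hence $\varphi_k(t)\in\Adm{\eta(t)}{X}$ for a.e.\ $t$, and since $\Adm{\eta(t)}{X}$ is closed under convergence in $X$ and the constraint set $\{(t,\varphi):\varphi\in\Adm{\eta(t)}{X}\}$ is closed in $C(I;X)$ (using continuity of $t\mapsto\eta(t)$ in $C^1$ and closedness of $C_\eta$), the limit satisfies $\varphi(t)\in\Adm{\eta(t)}{X}$ for \emph{all} $t$, i.e.\ $(i)$.

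For the $(iv)$ part, $(iv)\Rightarrow(i)$ follows by freezing time: for each $t$, the curve $\e\mapsto\Phi(\e,t)$ lies in $\E\cap X$ with $\Phi(0,t)=\eta(t)$ and derivative $\varphi(t)$ (the $C^1$ regularity into $L^\infty(I;X)$ gives the derivative for a.e.\ $t$, and then for all $t$ by the same closedness argument), so the static $(iv)\Rightarrow(i)$ of \Cref{T-char}, or directly \Cref{bad-displacement-timedep} via a contradiction with $\tilde\varphi(\e,t)\coloneqq(\Phi(\e,t)-\eta(t))/\e$, yields $\varphi(t)\in\Adm{\eta(t)}{X}$. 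The converse, $(i)+\eqref{eq:equivInteriorTime}\Rightarrow(iv)$, is where I expect the main obstacle: one takes the affine curve $\Phi(\e,t,x)\coloneqq\eta(t,x)+\e\varphi(t,x)$, which trivially has the right regularity and endpoint data, and must check $\Phi(\e,t)\in\E$ for small $\e$ uniformly in $t$. As in the static proof of $(i)+\eqref{eq:equivInterior}\Rightarrow(iv)$, injectivity need only be verified near the compact set of contact pairs; the subtlety is that this verification must be done with constants uniform in $t$. This works because the set $\bigcup_{t\in I}\{t\}\times\{(\tilde x,\tilde y)\in\partial Q\times\partial Q:\eta(t,\tilde x)=\eta(t,\tilde y)\}$ is compact (by continuity of $\eta$ in $t$ into $C^1$ and closedness of $C_\eta$), the strict inequality \eqref{eq:equivInteriorTime} extends to a uniform neighborhood of it by uniform continuity of $\varphi$ and $n_\eta$, and \Cref{IFT} supplies a local injectivity radius $r$ depending only on $E_0$; away from this compact set, the minimal-distance argument gives injectivity from uniform continuity. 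Combining the uniform-in-$t$ versions of these three cases (fixed minimal distance, local injectivity, near-contact) yields a single $\e_0>0$ with $\Phi(\e,t)\in\E$ for all $\e\in[0,\e_0)$ and all $t$, which is $(iv)$.
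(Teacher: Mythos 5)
Your proposal is correct and follows exactly the approach the paper takes, which is simply to run the static proof of \Cref{T-char} again with \Cref{bad-displacement-timedep} in place of \Cref{bad-displacement} and the time-dependent almost normal from \Cref{smooth-almost-normal} in place of its static counterpart. The paper's own proof is a single sentence referring the reader to the static case; your write-up usefully spells out the uniformity-in-$t$ mechanisms (compactness of $C_\eta$, uniform H\"older modulus of $n_\eta$ from \Cref{normal-continuity}, and the passage from ``a.e.\@ $t$'' in $(iii)$ to ``all $t$'' in $(i)$ via closedness) that the paper implicitly relies on but does not record.
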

The proof follows analogously as the proof of \Cref{T-char} with the following changes: The smooth almost normal $\tilde n_\eta$ is now the time-dependent version from \Cref{smooth-almost-normal}; instead of \Cref{bad-displacement} one has to invoke the time-dependent version \Cref{bad-displacement-timedep}.

\begin{prop}\label{prop:TErecoveryGlobal}
Let $\{\eta_k\}_k \subset L^{\infty}(I;\E \cap X) \cap W^{1,2}(I^{\circ}; W^{1,2}(Q; \RR^n))$ with $E(\eta_k(t))\leq E_0$ be given and assume that there exists $\eta$ satisfying \eqref{eta-time-dependent} such that $\eta_k(t) \to \eta(t)$ in $X$, uniformly in $t$ for $t \in I$. Then, for every $\varphi \in C(I;\Adm{\eta}{X})$ there exists a sequence $\{\varphi_k\}_k$ such that $\varphi_k \to \varphi$ in $C(I; X)$ and with the property that $\varphi_k \in C(I;\Adm{\eta_k}{X})$ for all $k \in \NN$. Additionally, if $J \subset I$ and $\varphi \in C(I;\Adm{\eta}{X}) \cap C^1_c(J; L^2(Q; \RR^n))$, then we can find a sequence $\{\varphi_k\}_k$ as above but with the property that $\varphi_k \in C(I;\Adm{\eta_k}{X}) \cap C^1_c(J; L^2(Q; \RR^n))$ for all $k \in \NN$.
\end{prop}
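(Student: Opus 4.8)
The plan is to reduce the time-dependent statement to the static recovery result \Cref{prop:TErecoveryLocal}, handling the continuity in $t$ by a uniformity argument that mirrors the one used in the proof of \Cref{compactness-cforce}. First I would fix $\varphi \in C(I;\Adm{\eta}{X})$ and, using the equivalence $(i)\Leftrightarrow(ii)$ in \Cref{time-TF}, replace $\varphi$ by an approximating sequence $\{\varphi_l\}_l$, $\varphi_l \to \varphi$ in $C(I;X)$, each satisfying the strict inequality \eqref{eq:equivInteriorTime} for all $(t,x)\in C_\eta$. The key observation is a quantitative, uniform-in-time version of the argument in \Cref{prop:TErecoveryLocal}: for fixed $l$, the compactness of $C_\eta$ (a closed subset of $I\times\partial Q$), the uniform convergence $n_{\eta_k}\to n_\eta$ on $I\times\partial Q$ (which follows from $\eta_k(t)\to\eta(t)$ in $X\hookrightarrow C^{1,\alpha}$ uniformly in $t$, together with \Cref{normal-continuity}), and the fact that any sequence of contact points $(t_k,x_k)\in C_{\eta_k}$ subconverges to a point of $C_\eta$ (exactly as shown in the proof of \Cref{compactness-cforce}, via \Cref{IFT} to separate the self-contact branch), together imply that there is $k(l)$ such that $\varphi_l(t)\in\Adm{\eta_k(t)}{X}$ for all $t\in I$ and all $k\ge k(l)$. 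Picking a non-decreasing sequence $l_k\to\infty$ with $k\ge k(l_k)$ and setting $\varphi_k\coloneqq \varphi_{l_k}$ then yields $\varphi_k\in C(I;\Adm{\eta_k}{X})$ with $\varphi_k\to\varphi$ in $C(I;X)$.

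More precisely, for the uniformity step I would argue by contradiction: if no such $k(l)$ existed, there would be $k_j\to\infty$ and $t_j\in I$ with $\varphi_l(t_j)\notin \Adm{\eta_{k_j}(t_j)}{X}$, i.e.\ some $x_j\in C_{\eta_{k_j}(t_j)}$ with $\sum_{z\in\eta_{k_j}^{-1}(t_j,\eta_{k_j}(t_j,x_j))}\varphi_l(t_j,z)\cdot n_{\eta_{k_j}}(t_j,z)\le 0$. Passing to a subsequence, $t_j\to t_*$ and $x_j\to x_*$, and by the contact-point convergence argument $(t_*,x_*)\in C_\eta$; in the self-contact case the second preimage points $y_j$ also subconverge to some $y_*$ with $\{x_*,y_*\}\subset\eta^{-1}(t_*,\eta(t_*,x_*))$ and $|x_*-y_*|\ge r$ by \Cref{IFT}. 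Using uniform convergence of $\eta_k$, of $n_{\eta_k}$, and of $\varphi_l$ (the latter is continuous on the compact $I\times\overline Q$, hence uniformly continuous, and $\varphi_l(t_j)\to\varphi_l(t_*)$ in $X$ so pointwise), one passes to the limit to obtain $\sum_{z\in\eta^{-1}(t_*,\eta(t_*,x_*))}\varphi_l(t_*,z)\cdot n_\eta(t_*,z)\le 0$, contradicting the strict inequality \eqref{eq:equivInteriorTime}. For the addendum, when $\varphi\in C^1_c(J;L^2)$, I would choose the approximants in $(ii)$ of \Cref{time-TF} so as to preserve this property — e.g.\ by adding $\frac1l\,\chi(t)\,\tilde n_{\eta}$ where $\tilde n_\eta$ is the smooth almost normal of \Cref{smooth-almost-normal} and $\chi\in C^\infty_c(J^{\circ})$ with $\chi\equiv 1$ on a large subinterval, or more simply by noting that the perturbation used to enforce strict inequality can be taken supported away from the time-boundary of $J$ and smooth in $t$ — so that each $\varphi_l$, and hence each $\varphi_k=\varphi_{l_k}$, lies in $C^1_c(J;L^2(Q;\RR^n))$ as well; the $C^1_c(J;L^2)$ convergence is then automatic from the construction.

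The main obstacle is the uniformity in time: unlike the static \Cref{prop:TErecoveryLocal}, here one cannot simply diagonalize separately for each $t$, since the resulting $\varphi_k$ would fail to be continuous in $t$. The resolution rests on the fact that the ``distance to admissibility'' of $\varphi_l$ against $\eta_k$ — governed by $\min_{x\in C_{\eta_k}(t)}\sum \varphi_l\cdot n_{\eta_k}$ — is controlled uniformly in $t$ by the uniform (in $t$ and in the space variable) convergence of $\eta_k$ and $n_{\eta_k}$ together with the uniform continuity of $\varphi_l$ on the compact cylinder $I\times\overline Q$; this is precisely why the hypotheses of the proposition demand convergence $\eta_k(t)\to\eta(t)$ in $X$ \emph{uniformly in $t$} and $n_\eta\in C(I\times\partial Q;\RR^n)$. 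The remaining point requiring a little care is the self-contact case, where one must ensure that the second preimage points $y_j$ genuinely converge and stay separated from $x_j$; this is exactly what \Cref{IFT} delivers, and it is used verbatim as in the proof of \Cref{compactness-cforce}.
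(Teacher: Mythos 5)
Your main argument is essentially the paper's: the paper's own proof is just the one sentence ``the proof is the same as in \Cref{prop:TErecoveryLocal}, using the time-dependent almost normal and uniform convergence in time,'' and your elaboration --- strict-inequality approximants from \Cref{time-TF}$(ii)$, the contradiction argument built on the contact-point convergence from \Cref{compactness-cforce} and \Cref{IFT}, then diagonalization --- is a correct fleshing-out of exactly that recipe for the first assertion.

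For the addendum, however, the argument as written has a gap. With $\varphi_l=\varphi+\tfrac1l\chi(t)\tilde n_\eta$ and $\chi$ vanishing near the time-boundary, the strict inequality \eqref{eq:equivInteriorTime} fails at any contact point $(t_*,x_*)$ with $\chi(t_*)=0$: there $\varphi_l(t_*)=0$ and the sum is exactly $0$, so the conclusion $\sum\varphi_l(t_*,z)\cdot n_\eta(t_*,z)\le 0$ that you extract in the limit is no longer a contradiction. The fix is short but should be spelled out: when $\chi(t_*)=0$, for $j$ large one has $t_j\notin\operatorname{supp}\varphi$ (since $\operatorname{supp}\varphi\subset\{\chi=1\}$ is compact and $t_*$ is at positive distance from $\{\chi=1\}$), hence $\varphi_l(t_j)=\tfrac{\chi(t_j)}{l}\tilde n_\eta$; the test sum $\tfrac{\chi(t_j)}{l}\sum_z\tilde n_\eta(z)\cdot n_{\eta_{k_j}}(t_j,z)$ is then nonnegative for $j$ large directly from the uniform convergence $n_{\eta_k}\to n_\eta$ and the almost-normal bound $\tilde n_\eta\cdot n_\eta>\tfrac12$, contradicting the assumed strict negativity. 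Two further small points: $\tilde n_\eta$ from \Cref{smooth-almost-normal} is only continuous in $t$, so to obtain $\varphi_l\in C^1_c(J;L^2)$ you should additionally mollify $\tilde n_\eta$ slightly in time (harmless, since a small time-mollification preserves the almost-normal property); and $\chi$ should be taken in $C^\infty_c(J;[0,1])$ rather than $C^\infty_c(J^\circ)$ --- in the application $J=[0,T)$ is half-open, and $\chi$ must equal $1$ at $t=0$ whenever $\varphi(0)\neq 0$.
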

Again, the proof is the same as in \Cref{prop:TErecoveryLocal}, using the time-dependent almost normal and uniform convergence in time.

\section{Existence results for the quasistatic regime}
\label{Aux-sec}
This section is concerned with the study of the quasistatic counterpart to \eqref{eq:strongSolution}. To be precise, for a given energy-dissipation pair $(E, R)$, we prove existence of solutions to 
\begin{equation}
\label{aux-Pb}
DE (\eta) + D_2 R(\eta, \partial_t \eta) = f
\end{equation}
in a framework compatible with the standard assumptions of second-order nonlinear elasticity. As a particular case of our analysis, we obtain an existence result for a parabolic variant of \eqref{eq:strongSolution}, that is, 
\begin{equation}
\label{aux-par}
\rho\frac{\partial_t \eta}{h} + DE(\eta) + D_2R(\eta, \partial_t \eta) = f + \rho\frac{\zeta}{h},
\end{equation}
where $\zeta(t)$ will later correspond to $\partial_t\eta(t-h)$ and the two terms thus form the difference quotient 
\[
\rho\frac{\partial_t \eta (t) -\partial_t \eta(t-h)}{h}.
\]

While \eqref{aux-par} has no direct physical relevance, solutions to this problem will play a fundamental role when inertial effects are taken into account (see \Cref{inertia-sec}). 

Similarly to \cite{PalmerHealey}, where the authors study equilibrium configurations for a second-order nonlinear solid, our presentation is divided into two parts. First, we prove the existence of solutions to a variational inequality via minimizing movements. Then, in the second part of the section, we recover the governing equations by reintroducing contact forces. These forces can be interpreted as a Lagrange multiplier associated to the global injectivity constraint.

\subsection{Existence of weak solutions via minimizing movements}
Throughout the section we consider an energy-dissipation pair $(E, R)$, which can be thought of as a regularized version of the energy-dissipation pair introduced in  \Cref{Modelling-sec} where, roughly speaking, the underlying space $W^{2,p}$ is replaced by $W^{k_0, 2}$, with $k_0 - \frac{n}{2}  > 2 - \frac{n}{p}$, so that $W^{k_0, 2}(Q; \RR^n)$ compactly embeds in $W^{2, p}(Q; \RR^n)$. Note that  the leading term in $E$ is now quadratic (see \Cref{inertia-sec}), which results in a corresponding linear term in the equation. The assumptions \eqref{E1}--\eqref{E5} and \eqref{R1}--\eqref{R4} are thus replaced by their regularized versions as follows.

We assume that $E\colon W^{k_0,2}(Q;\RR^n)\to (-\infty,\infty]$ satisfies the following properties: 
\begin{enumerate}[label=(E$'$\!.\arabic*), ref=E$'$\!.\arabic*]
\item \label{A1} There exists $E_{\min} > - \infty$ such that $E(\eta) \ge E_{\min}$ for all $\eta \in W^{k_0, 2}(Q; \RR^n)$. Moreover, $E(\eta) < \infty$ for every $\eta \in W^{k_0, 2}(Q; \RR^n)$ with $\inf_Q \det \nabla \eta > 0$.
\item \label{A2} For every $E_0  \ge E_{\min}$ there exists $\e_0 > 0$ such that $\det \nabla \eta \ge \e_0$ for all $\eta$ with $E(\eta) \le E_0$.
\item \label{A3} For every $E_0 \ge E_{\min}$ there exists a constant $C$ such that 
\[
\|\nabla^{k_0} \eta\|_{L^2} \le C 
\]
for all $\eta$ with $E(\eta) \leq E_0$.
\item \label{A4} $E$ is weakly lower semicontinuous, that is, 
\[
E(\eta) \le \liminf_{k \to \infty} E(\eta_k)
\]
whenever $\eta_{k} \rightharpoonup \eta$ in $W^{k_0, 2}(Q; \RR^n)$.
\item \label{A5} $E$ is differentiable in its effective domain with derivative $DE(\eta) \in (W^{k_0, 2}(Q; \RR^n))^*$ given by 
\[
DE(\eta) \langle \varphi \rangle = \left.\frac{d}{d\e} E(\eta + \e \varphi))\right|_{\mathrlap{\e = 0}}. 
\]
Furthermore, $DE$ is bounded and continuous with respect to weak convergence in $W^{k_0, 2}(Q; \RR^n)$ on any sub-level set of $E$. \end{enumerate}

Furthermore, we assume that the dissipation potential $R \colon W^{k_0, 2}(Q; \RR^n) \times W^{k_0, 2}(Q; \RR^n) \to [0, \infty)$ satisfies the following properties:
\begin{enumerate}[label=(R$'$\!.\arabic*), ref=R$'$\!.\arabic*]
\item \label{B1} $R$ is weakly lower semicontinuous in its second argument, that is, for all $\eta \in W^{k_0, 2}(Q; \RR^n)$ and every $b_{k} \rightharpoonup b$ in $W^{k_0, 2}(Q; \RR^n)$ we have that
\[
R(\eta, b) \le \liminf_{k \to \infty} R(\eta, b_k).
\]
\item \label{B2} $R$ is homogeneous of degree $2$ with respect to its second argument, that is, 
\[
R(\eta, \lambda b) = \lambda^2 R(\eta, b)
\]
for all $\lambda \in \RR$.
\item \label{B3} $R$ admits the following Korn-type inequality: For all $\e_0 > 0$, there exists a constant $K_R$ such that
\[
K_{R} \| b\|_{W^{k_0,2}}^2 \le \norm[L^2]{b}+R(\eta, b)
\]
for all $\eta \in \mathcal{E} \cap W^{k_0, 2}(Q; \RR^n)$ with $\det \nabla \eta > \e_0$ and $b \in W^{k_0,2}(Q;\RR^n)$.
\item \label{B4} $R$ is differentiable in its second argument, with derivative $D_2R(\eta, b) \in (W^{k_0, 2}(Q; \RR^n))^*$ given by
\[
 D_2R(\eta, b)\langle \varphi \rangle \coloneqq \left.\frac{d}{d\e} R(\eta, b + \e \varphi)\right|_{\mathrlap{\e = 0}}. 
\]
Furthermore, the map $(\eta, b) \mapsto D_2R(\eta, b)$ is bounded and weakly continuous with respect to both arguments, that is, 
\[
\lim_{k \to \infty}  D_2R(\eta_k, b_k) \langle \varphi \rangle =  D_2R(\eta, b) \langle \varphi \rangle
\]
holds for all $\varphi \in W^{k_0, 2}(Q; \RR^n)$ and all sequences $\eta_k \rightharpoonup \eta$ and $b_k \rightharpoonup b$ in $W^{k_0, 2}(Q; \RR^n)$.
\end{enumerate}
The quasistatic equivalent of \eqref{B3} is formulated as follows:
\begin{enumerate}[label=(R$'$\!.3\textsubscript{q}), ref=R$'$\!.3\textsubscript{q}]
 \item \label{B3q} $R$ admits the following Korn-type inequality: For any $\e_0 > 0$, there exists $K_R$ such that
\[
K_R \|b\|_{W^{k_0,2}}^2 \le  R(\eta, b)
\]
for all $\eta \in \mathcal{E}$ with $\det \nabla \eta > \e_0$ and all $b \in W^{k_0,2}_{\Gamma}(Q; \RR^n)$.
\end{enumerate}

Next, we give the precise definition of a solution to \eqref{aux-Pb}.
\begin{definition}
\label{sol-def-aux}
Let $h > 0$, $\eta_0 \in \E \cap W^{k_0, 2}(Q; \RR^n)$, and $f \in L^2((0, h); L^2(Q; \RR^n))$ be given. We say that 
\[
\eta \in W^{1,2}((0, h); W^{k_0,2}(Q; \RR^n)) \cap L^{\infty}((0, h); \E \cap W^{k_0, 2}(Q; \RR^n)) \quad \text{with} \quad E(\eta)\in L^\infty((0, h))
\]
is a solution to \eqref{aux-Pb} in $(0, h)$ if $\eta(0) = \eta_0$ and
\begin{equation}
\label{var-ineq}
\int_0^h  DE(\eta(t)) \langle \varphi(t) \rangle + D_2R(\eta(t), \partial_t \eta(t)) \langle \varphi(t)\rangle\,dt \ge \int_0^h \langle f(t), \varphi(t) \rangle_{L^2}\,dt
\end{equation}
holds for all $\varphi \in C([0,h];\Adm{\eta}{W^{k_0, 2})}$.
\end{definition}

\begin{rmk} Let us comment here on \Cref{sol-def-aux}.
\begin{itemize}
\item[$(i)$] It is worth noting that the initial condition $\eta(0) = \eta_0$ in \Cref{sol-def-aux} is satisfied in the classical sense since every element $\eta$ of $W^{1,2}((0, h); W^{k_0,2}(Q; \RR^n))$ admits a representative in the space $C_{w}([0,h]; W^{k_0, 2}(Q; \RR^n))$. 
\item[$(ii)$] We also remark that the variational inequality \eqref{var-ineq} is preserved along sequences of test functions $\{\varphi_k\}_k$ such that $\varphi_k \rightharpoonup \varphi$ in $L^2((0, h); W^{k_0,2}(Q; \RR^n))$. 
\item[$(iii)$] The inequality in \eqref{var-ineq} is a consequence of the injectivity constraint imposed on the class of admissible deformations. In particular, it is evident from the fact that $C^{\infty}_c(Q; \RR^n) \subset \Adm{\eta}{X}$ that if $\varphi(t, \cdot) \in C^{\infty}_c(Q; \RR^n)$ then \eqref{var-ineq} holds as an equality. Similarly, equality holds if collisions can be excluded a priori. 
\end{itemize}
\end{rmk}

The existence of solutions to \eqref{aux-Pb} is established in the following theorem.

\begin{thm}
\label{AuxExistence}
Let $E$ satisfy \eqref{A1}--\eqref{A5}, $R$ satisfy \eqref{B1}, \eqref{B2}, \eqref{B3q} as well as \eqref{B4} and let $h$, $\eta_0$, and $f$ be given as above. Then there exists a solution $\eta$ to \eqref{aux-Pb} in the sense of \Cref{sol-def-aux}. Furthermore, for every $t \in [0,h]$, $\eta$ satisfies the energy inequality
\begin{equation}
\label{EI}
E(\eta(t)) + 2\int_0^t R(\eta(s), \partial_t \eta(s))\,ds \le E(\eta_0) + \int_0^t \langle f(s), \partial_t \eta(s)\rangle_{L^2} \,ds.
\end{equation}
\end{thm}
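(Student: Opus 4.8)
The plan is to construct $\eta$ by a minimizing-movements (semi-implicit Euler) scheme for the regularized pair $(E,R)$ and then to pass to the limit in the time step. Fix $N\in\NN$, put $\tau=h/N$, $t_i=i\tau$, let $f_i\coloneqq\frac1\tau\int_{t_{i-1}}^{t_i}f(s)\,ds$, set $\eta_0^\tau\coloneqq\eta_0$, and recursively take $\eta_i^\tau$ to be a minimizer over $\E\cap W^{k_0,2}(Q;\RR^n)$ of
\[
\eta\longmapsto E(\eta)+\tau R\Big(\eta_{i-1}^\tau,\tfrac{\eta-\eta_{i-1}^\tau}{\tau}\Big)-\langle f_i,\eta\rangle_{L^2}.
\]
Existence at each step follows from the direct method: evaluating the functional at the competitor $\eta_{i-1}^\tau$ and using $E\ge E_{\min}$, $R\ge0$, \eqref{A3}, the boundary datum on $\Gamma$ (resp.\ the confinement $\eta(Q)\subset\Omega$) and a Poincar\'e-type inequality, a minimizing sequence stays bounded in $W^{k_0,2}$, hence (up to a subsequence) converges weakly in $W^{k_0,2}$ and, by the compact embedding into $C^{1,\alpha}$, strongly in $C^1(Q;\RR^n)$; the set $\E$ is stable under this convergence, since by \eqref{A2} the Jacobians stay above a fixed positive constant and converge uniformly, the Ciarlet--Ne\v{c}as identity is stable under uniform convergence, and $\eta(Q)\subset\Omega$ follows from the openness of $\eta$ (as $\det\nabla\eta>0$) together with $\eta(\overline Q)\subset\overline\Omega$; lower semicontinuity of the functional comes from \eqref{A4}, from \eqref{B1} applied to $\tfrac{\,\cdot\,-\eta_{i-1}^\tau}{\tau}$, and from strong $L^2$-convergence of the linear term. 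Iterating produces $\eta_1^\tau,\dots,\eta_N^\tau$.

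From each minimizer one extracts the discrete Euler--Lagrange relation and the a priori estimates. For $\varphi\in\Adm{\eta_i^\tau}{W^{k_0,2}}$, \Cref{T-char} $(iii)$ furnishes $\varphi_k\to\varphi$ in $W^{k_0,2}$ and $\e_k>0$ with $\eta_i^\tau+\e\varphi_k\in\E$ for $\e\in[0,\e_k)$; since $\e=0$ minimizes the functional along this ray, the right derivative at $\e=0$ (computed via \eqref{A5}, \eqref{B4}, and the $1$-homogeneity of $D_2R$ in its second slot, which cancels the factor $\tau$) is nonnegative, and letting $k\to\infty$ gives
\[
DE(\eta_i^\tau)\langle\varphi\rangle+D_2R\Big(\eta_{i-1}^\tau,\tfrac{\eta_i^\tau-\eta_{i-1}^\tau}{\tau}\Big)\langle\varphi\rangle\ge\langle f_i,\varphi\rangle_{L^2}\qquad\text{for all }\varphi\in\Adm{\eta_i^\tau}{W^{k_0,2}}.
\]
Comparing $\eta_i^\tau$ with $\eta_{i-1}^\tau$ in the minimization, summing over $i$, and writing $\eta^\tau$ for the piecewise-affine interpolant and $\underline\eta^\tau,\bar\eta^\tau,\bar f^\tau$ for the relevant piecewise-constant interpolants, one obtains the discrete energy estimate
\[
E(\bar\eta^\tau(t))+\int_0^t R(\underline\eta^\tau,\partial_t\eta^\tau)\,ds\le E(\eta_0)+\int_0^t\langle\bar f^\tau,\partial_t\eta^\tau\rangle_{L^2}\,ds.
\]
By \eqref{A2} and \eqref{A3} this bounds $E(\bar\eta^\tau)$ and $\nabla^{k_0}\eta^\tau$ in $L^2$ and keeps $\det\nabla\eta^\tau$ away from $0$, uniformly in $\tau$; invoking the Korn-type inequality \eqref{B3q} one then absorbs the forcing term and gets a uniform bound on $\partial_t\eta^\tau$ in $L^2((0,h);W^{k_0,2}(Q;\RR^n))$. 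The main obstacle I anticipate is precisely this last step: unlike \eqref{B3}, the inequality \eqref{B3q} is available only on the tangent cone, so one must show that the incremental velocities $\tfrac{\eta_i^\tau-\eta_{i-1}^\tau}{\tau}$ are admissible test functions for $\eta_{i-1}^\tau$ — this is where the tangent-cone machinery of \Cref{Contact-sec} is essential.

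With the uniform bounds in hand, the Aubin--Lions lemma yields, along a subsequence, $\eta^\tau\to\eta$ in $C([0,h];C^{1,\alpha'}(Q;\RR^n))$ and $\partial_t\eta^\tau\rightharpoonup\partial_t\eta$ in $L^2((0,h);W^{k_0,2})$; since $\|\bar\eta^\tau-\eta^\tau\|$ and $\|\underline\eta^\tau-\eta^\tau\|$ are $O(\tau)$ in $L^2((0,h);W^{k_0,2})$, the piecewise-constant interpolants converge to $\eta$ as well, uniformly in time. The limit $\eta$ has the regularity required by \Cref{sol-def-aux}, satisfies $\eta(0)=\eta_0$, has $E(\eta)\in L^\infty$, and $\eta(t)\in\E$ for all $t$ (stability of $\E$, as above). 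For a test function $\varphi\in C([0,h];\Adm{\eta}{W^{k_0,2}})$, \Cref{prop:TErecoveryGlobal} provides $\varphi^\tau\to\varphi$ in $C([0,h];W^{k_0,2})$ with $\varphi^\tau(t)\in\Adm{\bar\eta^\tau(t)}{W^{k_0,2}}$; inserting $\varphi^\tau(t)$ into the discrete Euler--Lagrange relation on each subinterval, multiplying by $\tau$, and summing gives
\[
\int_0^h DE(\bar\eta^\tau(t))\langle\varphi^\tau(t)\rangle+D_2R(\underline\eta^\tau(t),\partial_t\eta^\tau(t))\langle\varphi^\tau(t)\rangle\,dt\ge\int_0^h\langle\bar f^\tau(t),\varphi^\tau(t)\rangle_{L^2}\,dt,
\]
and passing $\tau\to0$ — using boundedness and strong-convergence continuity of $DE$ on the sublevel set (\eqref{A5}), weak continuity of $D_2R$ in both arguments (\eqref{B4}), $\varphi^\tau\to\varphi$ strongly, and $\bar f^\tau\to f$ in $L^2$ — yields the variational inequality \eqref{var-ineq}. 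For the energy inequality \eqref{EI} one passes to the limit in the discrete estimate, the left-hand side being controlled by weak lower semicontinuity (\eqref{A4} for the energy, \eqref{B1} together with $\underline\eta^\tau\to\eta$ strongly and $\partial_t\eta^\tau\rightharpoonup\partial_t\eta$ for the dissipation) and the forcing term converging. Here the second subtlety appears: the naive discrete estimate carries only a single factor of $R$, whereas \eqref{EI} needs $2R$. This missing dissipation is recovered by sharpening the scheme via a De Giorgi--type variational interpolant, which produces in the limit an additional term $\int_0^t R(\eta,\partial_t\eta)\,ds$ stemming from the (metric) slope of $E$ along the solution; by the Euler--Lagrange relation and the $2$-homogeneity of $R$ this extra term coincides with the first one, and the two combine to $2\int_0^t R(\eta,\partial_t\eta)\,ds$.
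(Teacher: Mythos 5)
Your scheme is essentially the paper's: semi-implicit minimizing movements over $\E\cap W^{k_0,2}$, discrete Euler--Lagrange inequality via the tangent-cone characterizations of \Cref{T-char}, recovery sequences from \Cref{prop:TErecoveryGlobal}, and passage to the limit using \eqref{A5}, \eqref{B4}, strong convergence of interpolants and weak convergence of $\partial_t\eta^\tau$. Up to the variational inequality, your argument matches the paper's Steps 1 and 2.

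Where you diverge is the factor-of-two in the energy inequality, and your proposed fix is weaker than what the paper actually does. You correctly notice that the raw discrete estimate carries only a single $R$, and you propose De Giorgi's variational interpolant to recover the missing dissipation. The paper instead avoids this entirely: after establishing the variational inequality, it shows in a separate step that $-\chi_{[0,t]}\partial_t\eta$ lies in $C([0,h];\Adm{\eta}{W^{k_0,2}})$ (by approximating with backward difference quotients and almost normals, and invoking \Cref{bad-displacement}), substitutes it into \eqref{var-ineq}, uses the chain rule for the $DE$-term, and reads off the factor $2$ directly from the $2$-homogeneity identity $D_2R(\eta,b)\langle b\rangle=2R(\eta,b)$. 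This route is cleaner and safer: the De Giorgi slope-identity machinery is built for gradient flows on (geodesically) convex energies in metric spaces, and neither $E$ nor the constraint set $\E$ is convex here, the "metric" induced by $R$ is state-dependent, and you supply no argument that the interpolant stays in $\E$ or that the slope identity survives the constraint; you would also need a joint lower semicontinuity of $R$ in both arguments to pass to the limit in your dissipation integral, which \eqref{B1} alone does not give (the paper never needs this because it only uses the discrete estimate for a priori bounds). Making the De Giorgi route rigorous would require substantially more work than you indicate, while the paper's resolution requires only the admissibility of $-\partial_t\eta$, which is a natural time-dependent counterpart of the forward-admissibility issue you already flagged.

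On that flagged issue — whether $(\eta_i^\tau-\eta_{i-1}^\tau)/\tau$ belongs to $\Adm{\eta_{i-1}^\tau}{W^{k_0,2}}$ when invoking \eqref{B3q} — you are right that this needs care, and the paper also does not make it explicit in its Step 1. This is a genuine shared subtlety, though it only affects the a priori estimates, not the derivation of \eqref{EI}. So your proposal is correct in its main structure but differs from the paper precisely in the step you identify as the second subtlety, and the alternative you suggest is considerably harder to justify than the paper's direct testing argument.
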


\begin{proof}
For the convenience of the reader, we divide the proof into several steps.
\newline
\emph{Step 1:} Given $M \in \NN$, we set $\tau \coloneqq h/M$ and decompose $[0, h]$ into subintervals $[k \tau, (k + 1)\tau]$ of length $\tau$. Moreover, for every $1\leq k\leq M$ we let 
\[
f_k \coloneqq \frac{1}{\tau}\int_{(k - 1)\tau}^{k \tau}f(t)\,dt \in L^2(Q; \RR^n).
\]
We then define recursively $\eta_k \in \E \cap W^{k_0, 2}(Q; \RR^n)$ to be a solution of the minimization problem for
\begin{equation}
\label{Jk-def}
\J_k(\eta) \coloneqq E(\eta) + \tau R\left(\eta_{k - 1}, \tfrac{\eta - \eta_{k - 1}}{\tau}\right) - \tau \left \langle f_k, \tfrac{\eta - \eta_{k-1}}{\tau} \right\rangle_{L^2},
\end{equation}
that is, 
\begin{equation}
\label{etak-def}
\eta_k \in \arg\min \left\{\J_k(\eta) : \eta \in \E \cap W^{k_0, 2}(Q; \RR^n) \right\}.
\end{equation}
We remark that the existence of $\eta_k$ is a consequence of the direct method in the calculus of variations. Indeed \eqref{A3} and \eqref{B3q} imply that $\J_k$ is coercive and the rest follows from \eqref{A1}, \eqref{A2}, \eqref{A4} and \eqref{B1}. Therefore, in view of \eqref{etak-def} we have that $\J_k(\eta_k) \le \J_k(\eta_{k - 1})$; moreover, since $R(\eta, 0) = 0$ for all $\eta$ (see \eqref{B2}), the inequality can be rewritten as
\begin{equation}
\label{etak-min}
E(\eta_k) + \tau R\left(\eta_{k - 1}, \tfrac{\eta_k - \eta_{k - 1}}{\tau}\right) \le E(\eta_{k - 1}) + \tau \left \langle f_k, \tfrac{ \eta_{k} - \eta_{k - 1}}{\tau} \right \rangle_{L^2}.
\end{equation}
Additionally, \eqref{B3q} implies that 
\begin{equation}
\label{P+K}
\frac{K_R}{\tau} \|\eta_k - \eta_{k - 1}\|_{L^2}^2 \le \tau R \left(\eta_{k - 1}, \tfrac{\eta_k - \eta_{k - 1}}{\tau}\right).
\end{equation}
Notice that by Young's inequality we have that 
\begin{equation}
\label{PP}
\tau \left \langle f_k, \tfrac{ \eta_{k} - \eta_{k - 1}}{\tau} \right \rangle_{L^2} \le \frac{\tau}{2 c} \|f_k\|_{L^2}^2 + \frac{c}{2 \tau} \|\eta_k - \eta_{k - 1}\|_{L^2}^2.
\end{equation}
Thus, combining \eqref{P+K} and \eqref{PP} with \eqref{etak-min} yields
\begin{equation}
\label{etak-min-CS}
E(\eta_k) + \frac{\tau}{2} R\left(\eta_{k - 1}, \tfrac{\eta_k - \eta_{k - 1}}{\tau}\right) \le E(\eta_{k - 1}) + \frac{\tau}{2 c} \|f_k\|_{L^2}^2.
\end{equation}
Summing over $k = 1, \dots, m$ in \eqref{etak-min-CS}, where $m \le M$, we arrive at
\begin{equation}
\label{UB-tau}
E(\eta_m) + \sum_{k = 1}^m \frac{\tau}{2} R\left(\eta_{k - 1}, \tfrac{\eta_k - \eta_{k - 1}}{\tau}\right) \le E(\eta_0) + \frac{1}{2c}\sum_{k = 1}^m \tau \|f_k\|_{L^2}^2.
\end{equation}
Notice that an application of Jensen's inequality yields
\begin{equation}
\label{fL2L2}
\sum_{k = 1}^m \tau \|f_k\|_{L^2}^2 \le \sum_{k = 1}^m \int_{(k - 1) \tau}^{k \tau}\|f(t)\|_{L^2}^2\,dt \le \|f\|_{L^2(L^2)}^2.
\end{equation}
Combining \eqref{UB-tau} and \eqref{fL2L2} we can find a constant $C_0$ which depends only on $Q$, $K_{R}$, $E(\eta_0)$ and $\|f\|_{L^2(L^2)}$ such that the following estimates hold uniformly in $\tau$:
\begin{align}
\sup_{k \le M} E(\eta_k)  & \le C_0, \label{ape-1} \\
\sum_{k = 1}^M \tau \left\| \frac{\eta_k - \eta_{k - 1}}{\tau} \right\|_{L^2}^2 & \le C_0, \label{ape-2} \\
\sum_{k = 1}^M \tau R\left(\eta_{k - 1}, \tfrac{\eta_k - \eta_{k - 1}}{\tau}\right) & \le C_0. \label{ape-3}
\end{align}
Next, for $t \in [(k - 1) \tau, k \tau)$ we let
\[
\underline{\eta}_{\tau}(t) \coloneqq \eta_{k - 1}, \qquad \overline{\eta}_{\tau}(t) \coloneqq \eta_k, \qquad \text{ and }  \qquad \eta_{\tau}(t) \coloneqq \frac{k \tau - t}{\tau} \eta_{k - 1} + \frac{t - (k - 1) \tau}{\tau}\eta_k.
\]
Notice that due to the coercivity of $E$, the families of functions $\{\underline{\eta}_{\tau}\}_{\tau}$, $\{\overline{\eta}_{\tau}\}_{\tau}$, and $\{\eta_{\tau}\}_{\tau}$ are uniformly bounded in  $L^{\infty}((0, h); W^{k_0,2}(Q; \RR^n))$. Moreover, from \eqref{ape-2} and \eqref{ape-3} we also obtain that 
\begin{align*}
\int_0^h \|\partial_t \eta_{\tau}(t)\|_{L^2}^2\,dt & \le C_0, \\
K_{R}\int_0^h \|\nabla^{k_0} \partial_t \eta_{\tau}(t)\|_{L^2}^2\,dt & \le C_0,
\end{align*}
where $K_{R}$ is the constant in the Korn's inequality for $R$ (see \eqref{B3q}), which is bounded thanks to \eqref{ape-1}. Reasoning as above we have that $\partial_t \eta_{\tau}$ is bounded in $L^2((0, h); W^{k_0,2}(Q; \RR^n))$. Consequently, $\{\eta_{\tau}\}_{\tau}$ is bounded in $W^{1,2}((0, h); W^{k_0,2}(Q; \RR^n))$, and eventually extracting a subsequence (which we do not relabel) we find $\underline{\eta}$, $\overline{\eta} \in L^{\infty}((0, h); W^{k_0, 2}(Q; \RR^n))$ and $\eta \in W^{1,2}((0, h); W^{k_0, 2}(Q; \RR^n))$ such that
\[
\arraycolsep=1.4pt\def\arraystretch{1.6}
\begin{array}{rll}
\underline{\eta}_{\tau} & \overset{\ast}{\rightharpoonup} \underline{\eta} & \text{ in } L^{\infty}((0, h); W^{k_0, 2}(Q; \RR^n)), \\
\overline{\eta}_{\tau} & \overset{\ast}{\rightharpoonup} \overline{\eta} & \text{ in } L^{\infty}((0, h); W^{k_0, 2}(Q; \RR^n)),
\end{array}
\]
and
\begin{equation}
\label{weakH1Hk}
\eta_{\tau} \rightharpoonup \eta \ \ \text{ in } W^{1,2}((0, h); W^{k_0,2}(Q; \RR^n)).
\end{equation}
Next, we claim that $\underline{\eta} = \overline{\eta} = \eta$. Indeed, by \eqref{P+K} and \eqref{ape-3} we see that 
\begin{equation}
\label{holder}
\|\eta_k - \eta_{k - 1}\|_{W^{k_0,2}}^2 \le \frac{C_0 \tau}{c}.
\end{equation}
In turn, we have that for $t \in [(k - 1) \tau, k \tau)$
\begin{equation}
\label{C1/2}
\|\eta_{\tau}(t) - \overline{\eta}_{\tau}(t)\|_{W^{k_0,2}}^2 = \left\|\frac{k \tau - t}{\tau}(\eta_{k - 1} - \eta_k)\right\|_{W^{k_0,2}}^2 \le \frac{C_0 \tau}{c}.
\end{equation}
To prove the claim, it is enough to notice that in view of \eqref{C1/2},
\[
\left|\int_0^h \langle \varphi(t), \eta_\tau(t) - \overline \eta_\tau(t) \rangle \,dt \right| \le \lim_{\tau \to 0} \sqrt{\frac{C_0 \tau}{c}} \int_0^h \|\varphi(t)\|_{(W^{k_0, 2})^*}\,dt = 0
\]
holds for all $\varphi \in L^1((0, h); W^{k_0, 2}(Q; \RR^n)^*)$.

We conclude this step by proving that $\eta \in L^{\infty}((0, h); \E \cap W^{k_0, 2}(Q; \RR^n))$. The result, in turn, is obtained by showing that  the sequence $\{\overline{\eta}_{\tau}\}_{\tau}$ converges to $\eta$ uniformly; to be precise, we will show that (up to the extraction of a subsequence)
\begin{equation}
\label{unif.conv}
\overline{\eta}_{\tau} \to \eta \ \ \text{ in } L^{\infty}((0, h); W^{2, p}(Q; \RR^n)).
\end{equation}
Notice that this readily implies the claim since $\eta \in L^{\infty}((0, h); W^{k_0, 2}(Q; \RR^n))$ and furthermore since $\E$ is closed with respect to weak convergence in $W^{2, p}(Q; \RR^n)$. To prove \eqref{unif.conv}, we argue as follows: By \eqref{weakH1Hk}, together with an application of the Aubin--Lions lemma we see that $\eta_{\tau} \to \eta$ in $C([0,h]; W^{2, p}(Q; \RR^n))$. Therefore, \eqref{C1/2} implies that 
\begin{align*}
\|\eta(t) - \overline{\eta}_{\tau}(t)\|_{W^{2, p}} & \le \|\eta(t) - \eta_{\tau}(t)\|_{W^{2, p}} + \|\eta_{\tau}(t) - \overline{\eta}_{\tau}(t)\|_{W^{2, p}} \\
& \le \|\eta - \eta_{\tau}\|_{L^{\infty}(W^{2, p})} + C\|\eta_{\tau}(t) - \overline{\eta}_{\tau}(t)\|_{W^{k_0, 2}} \\
& \le \|\eta - \eta_{\tau}\|_{L^{\infty}(W^{2, p})} + C\sqrt{\frac{C_0 \tau}{c}}
\end{align*}
holds for $\mathcal{L}^1$-a.e.\@ $t \in [0,h]$. Letting $\tau \to 0$ concludes the proof of \eqref{unif.conv}.
\newline
\emph{Step 2:} Next, we show that the function $\eta$ obtained in the previous step is a solution to \eqref{aux-Pb} in the sense of \Cref{sol-def-aux}. To this end, consider $\varphi \in W^{k_0,2}(Q;\RR^n)$ such that there is a curve $\Phi \in C^0([0,\e_0); \E \cap W^{k_0, 2}(Q; \RR^n)) \cap C^1([0, \e_0); W^{k_0, 2}(Q;\RR^n))$ with $\Phi'(0^+) = \varphi$, as in \Cref{T-char} $(iv)$. By the minimality of $\eta_k$ we obtain that
\begin{align}
 0 & \le \left.\frac{d}{d\e} \J_k(\Phi(\e))\right|_{\e = 0} 
  = \left[ D E (\eta_k) + D_2 R \left(\eta_{k-1}, \tfrac{\eta_{k}-\eta_{k-1}}{\tau}\right)\right]\langle \varphi \rangle - \langle f_k, \varphi \rangle_{L^2}. \label{VI_k}
\end{align}
By \Cref{T-char} $(ii)$, the same is then also true for all $\varphi \in \Adm{\eta}{W^{k_0,2}}$.

Let $\{\tau_j\}_{j \in \NN} \subset (0, 1)$ with $\tau_j \to 0^+$ be a decreasing subsequence for which \eqref{weakH1Hk} and \eqref{unif.conv} hold, and consider $\varphi \in C([0, h]; \Adm{\eta}{ W^{k_0,2})}$. By \Cref{prop:TErecoveryGlobal}, there exists a sequence $\{\varphi_j\}_{j \in \NN}$ such that $\varphi_j \to \varphi$ in $C([0, h]; W^{k_0, 2})$ and $\varphi_j \in C([0, h]; \Adm{\overline{\eta}_{j}}{W^{k_0, 2}})$. In particular, since this implies that $\varphi_j(t) \in \Adm{\eta_k}{W^{k_0, 2}}$ for $t \in [(k - 1)\tau_j,k \tau_j)$, for $\mathcal{L}^1$-a.e.\@ such $t$ we can rewrite \eqref{VI_k} as
\begin{equation}
\label{VI_tau}
 \left[ D E (\overline{\eta}_{\tau_j}(t)) + D_2 R(\underline{\eta}_{\tau_j}(t), \partial_t \eta_{\tau_j}(t))\right]\langle \varphi_j(t) \rangle  -  \left \langle   f_{\tau_j}(t), \varphi_j(t) \right \rangle_{L^2} \ge 0,
\end{equation}
where $f_{\tau_j}(t) \coloneqq f_k$ for all $t \in [(k - 1)\tau_j, k \tau_j)$. Integrating \eqref{VI_tau} over $[(k - 1) \tau_j, k \tau_j)$ and summing up the resulting inequalities we get
\begin{equation}
\label{EL-ineq}
\int_0^h \left[ DE(\overline{\eta}_{\tau_j}(t)) + D_2R(\underline{\eta}_{\tau_j}(t), \partial_t \eta_{\tau_j}(t))\right] \langle \varphi_j(t)\rangle\,dt \ge \int_0^h \langle f_{\tau_j}(t), \varphi_j(t) \rangle_{L^2}\,dt.
\end{equation}
Notice that $f_{\tau_j}(t) \to f(t)$ for $\mathcal{L}^1$-a.e.\@ $t \in [0,h]$ (to be precise, for every $t$ which is a Lebesgue point). Then by Lebesgue's dominated convergence theorem we also have that $f_{\tau_j} \to f$ in $L^2((0, h); L^2(Q; \RR^n))$. In turn, we can pass to the limit pass to the limit as $\tau_j \to 0^+$ on the right-hand side of \eqref{EL-ineq}. Convergence for the terms on the left-hand side is a consequence of \eqref{A5} and \eqref{B4}. Thus, we have shown that $\eta$ satisfies \eqref{var-ineq} for all $\varphi \in C([0,h];\Adm{\eta}{ W^{k_0,2}})$.
\newline
\emph{Step 3}: As it remains to verify that $\eta$ satisfies the energy inequality \eqref{EI}, the goal of this step is to show that $-\partial_t\eta$ is an admissible test function. 
For $t \in (\e, h - \e)$, let
\[
\partial^\e_t \eta(t) \coloneqq \frac{\eta(t - \e) - \eta(t)}{\e}.
\]
Let $\{\chi_{\e}\}_{\e}$ be a family of smooth cut-off functions with the property that $\chi_{\e} \in C_c((\e, h - \e); [0, 1])$ and $\chi_{\e} \to 1$ in $L^2((0, h))$. Then, $\chi_{\e}\partial^\e_t \eta \to - \partial_t \eta$ in $L^2((0, h); W^{k_0, 2}(Q; \RR^n))$. Fix $(t, x) \in C_{\eta}$. We claim that 
\[
\lim_{\e \to 0^+} \sum_{z\in\eta^{-1}(t,\eta(t,x))} \partial^\e_t \eta(t, z) \cdot n_\eta(t, x) \ge 0.
\]
Indeed, if this is not the case then we can find a sequence $\e_j \to 0^+$ and a number $\delta > 0$ such that $\sum_{z\in\eta^{-1}(t,\eta(t,x))}\partial^{\e_j}_t\eta(t, z) \cdot n_\eta(t, z) \leq - \delta$ for all $j$. In turn, by \Cref{bad-displacement}, we must have that 
\[
\eta(t - \e_j, \cdot) = \eta(t, \cdot) + \e_j \partial^{\e_j}_t\eta(t, \cdot) \notin \E
\]
for all $j$ sufficiently large. We thus reached a contradiction and the claim is proved. Now let 
\[
\varphi_{\e, \delta} \coloneqq \chi_{\e}\partial^\e_t \eta + \delta \xi_{\Gamma} \tilde{n}_\eta,
\]
where $\xi_{\Gamma} \colon \overline{Q} \to \RR$ is a smooth function that vanishes on $\Gamma$ and is otherwise positive. Then, for all small values of $\delta > 0$, in view of \Cref{time-TF} we have that $\varphi_{\e, \delta} \in C([0,h];\Adm{\eta}{W^{k_0,2}})$ for all $\e$ sufficiently small. The desired result follows by testing with $\varphi_{\e, \delta}$ in \eqref{var-ineq}, letting $\e \to 0^+$ first, and then $\delta \to 0^+$. A similar argument shows that $- \chi_{[0, t]}\partial_t \eta$ is an admissible test function.
\newline
\emph{Step 4:} In view of the previous step, we can substitute $\varphi = -\chi_{[0, t]}\partial_t \eta$ in the variational inequality \eqref{var-ineq} and obtain that 
\[
\int_0^t \left[ DE(\eta(s)) + D_2 R (\eta(s), \partial_t \eta(s))\right]\langle  \partial_t \eta(s)\rangle\,ds \le \int_0^t \langle f, \partial_t \eta(s)\rangle_{L^2}\,ds.
\]
Using the fundamental theorem of calculus for the term involving the elastic energy and \eqref{B2}, the previous inequality can be rewritten as
\[
E(\eta(t)) - E(\eta(0)) + 2 \int_0^t R(\eta(s),\partial_t \eta(s))\,ds \le \int_0^t \langle f(s), \partial_t \eta(s)\rangle_{L^2} \,ds.
\]
This concludes the proof.
\end{proof}

As a corollary, we show the existence of solutions to \eqref{aux-par}.

\begin{cor}
\label{par-approx-cor}
Let $E$ and $\eta_0$ be given as above and assume that for some $h>0$
\begin{equation}
\label{R-par}
\tilde{R}(\eta, b) \coloneqq R(\eta, b) + \frac{\rho}{2h}\|b\|_{L^2}^2,
\end{equation}
where $R$ now satisfies \eqref{B1}--\eqref{B4}.
Define additionally $\tilde{f} \coloneqq f + \rho\frac{\zeta}{h}$ for some $\zeta \in L^2((0, h); L^2(Q;\RR^n))$. Then the conclusions of \Cref{AuxExistence} continue to hold. In particular, this yields the existence of a weak solution to \eqref{aux-par} (in the sense of \Cref{sol-def-aux}), where \eqref{var-ineq} can be rewritten as
\begin{equation}
\label{var-ineq-par}
\int_0^h  [DE(\eta(t)) + D_2\tilde{R}(\eta(t), \partial_t \eta(t))] \langle \varphi(t)\rangle\,dt + \int_0^h \left\langle \rho \frac{\partial_t \eta(t)-\zeta}{h}, \varphi(t)\right\rangle_{L^2} \,dt \le \int_0^h \langle f(t), \varphi(t) \rangle_{L^2}\,dt.
\end{equation}
Furthermore, for every $t \in [0,h]$, $\eta$ satisfies the energy inequality
\begin{multline}
\label{EI-par}
E(\eta(t)) + 2 \int_0^t \tilde{R}(\eta(s), \partial_t \eta(s))\,ds + \frac{\rho}{2h}\int_0^t\|\partial_t \eta(s)\|_{L^2}^2\,ds \\ \le E(\eta_0) + \int_0^t \langle f(s), \partial_t \eta(s)\rangle_{L^2}^2\,ds+ \frac{\rho}{2h}\int_0^t  \norm[L^2]{\zeta}^2 ds.
\end{multline}
\end{cor}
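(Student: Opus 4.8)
The plan is to derive the corollary directly from \Cref{AuxExistence}, applied to the energy--dissipation pair $(E,\tilde R)$ and the forcing $\tilde f=f+\frac{\rho}{h}\zeta$, followed by an algebraic rewriting of the resulting variational and energy inequalities. Since $E$ is untouched, \eqref{A1}--\eqref{A5} carry over verbatim, and $\tilde f\in L^2((0,h);L^2(Q;\RR^n))$ because $\zeta\in L^2((0,h)\times Q;\RR^n)$, while $\eta_0\in\E\cap W^{k_0,2}(Q;\RR^n)$ is as required. Thus the only real task is to verify that $\tilde R$ satisfies \eqref{B1}, \eqref{B2}, \eqref{B3q}, \eqref{B4}; note that here $R$ is assumed only to satisfy \eqref{B1}--\eqref{B4}, so in particular the quasistatic Korn inequality \eqref{B3q} must be extracted from the weaker \eqref{B3}.

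The perturbation $b\mapsto\frac{\rho}{2h}\|b\|_{L^2}^2$ is $2$-homogeneous, so $\tilde R$ inherits \eqref{B2}; since $W^{k_0,2}(Q;\RR^n)$ embeds compactly into $L^2(Q;\RR^n)$ it is moreover weakly continuous on $W^{k_0,2}$, so adding it to $R$ preserves weak lower semicontinuity in the second variable, i.e.\ \eqref{B1}; and it is differentiable with derivative $\varphi\mapsto\frac{\rho}{h}\langle b,\varphi\rangle_{L^2}$, a map that is bounded on $W^{k_0,2}$ and, again by the compact embedding, weakly continuous in $b$ and independent of $\eta$, so that $D_2\tilde R(\eta,b)=D_2R(\eta,b)+\frac{\rho}{h}\langle b,\cdot\rangle_{L^2}$ satisfies \eqref{B4}. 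For \eqref{B3q} we use that $R\ge 0$: with $c_0\coloneqq\min\{1,\tfrac{\rho}{2h}\}>0$ and \eqref{B3} for $R$ one has
\[
\tilde R(\eta,b)=R(\eta,b)+\tfrac{\rho}{2h}\|b\|_{L^2}^2\ \ge\ c_0\big(R(\eta,b)+\|b\|_{L^2}^2\big)\ \ge\ c_0 K_R\|b\|_{W^{k_0,2}}^2
\]
for all admissible $\eta$ and all $b\in\Adm{\eta}{W^{k_0,2}}$, so \eqref{B3q} holds for $\tilde R$ with constant $c_0K_R$. This absorption of the $L^2$-term is precisely the point of introducing $\tilde R$, and it is the only step that is not bookkeeping.

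Having checked the hypotheses, \Cref{AuxExistence} applied to $(E,\tilde R,\tilde f)$ produces $\eta\in W^{1,2}((0,h);W^{k_0,2}(Q;\RR^n))\cap L^\infty((0,h);\E\cap W^{k_0,2}(Q;\RR^n))$ with $E(\eta)\in L^\infty((0,h))$, $\eta(0)=\eta_0$, satisfying \eqref{var-ineq} and \eqref{EI} with $R,f$ replaced by $\tilde R,\tilde f$; this is by definition a solution to \eqref{aux-par} in the sense of \Cref{sol-def-aux}. It remains only to unpack the definitions. Substituting $D_2\tilde R(\eta,\partial_t\eta)\langle\varphi\rangle=D_2R(\eta,\partial_t\eta)\langle\varphi\rangle+\frac{\rho}{h}\langle\partial_t\eta,\varphi\rangle_{L^2}$ and $\tilde f=f+\frac{\rho}{h}\zeta$ into the variational inequality and regrouping the $L^2$-pairings gives \eqref{var-ineq-par}. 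For the energy inequality one inserts $2\tilde R(\eta,\partial_t\eta)=2R(\eta,\partial_t\eta)+\frac{\rho}{h}\|\partial_t\eta\|_{L^2}^2$ and $\langle\tilde f,\partial_t\eta\rangle_{L^2}=\langle f,\partial_t\eta\rangle_{L^2}+\frac{\rho}{h}\langle\zeta,\partial_t\eta\rangle_{L^2}$ into \eqref{EI}, bounds the cross term by Young's inequality, $\frac{\rho}{h}\langle\zeta,\partial_t\eta\rangle_{L^2}\le\frac{\rho}{2h}\|\zeta\|_{L^2}^2+\frac{\rho}{2h}\|\partial_t\eta\|_{L^2}^2$, and moves the $\|\partial_t\eta\|_{L^2}^2$ contribution to the left, which yields \eqref{EI-par}. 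The only obstacle worth flagging is the derivation of \eqref{B3q} above; one should also note that the Korn constant $c_0K_R$ degenerates as $h\to0^+$, but this is irrelevant here since $h$ is fixed — the $h$-uniform estimates needed for the limit $h\to0^+$ are obtained separately in \Cref{inertia-sec} from \eqref{EI-par}.
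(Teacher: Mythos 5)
Your proposal is correct and follows essentially the same route as the paper's (very terse) proof: verify that $(E,\tilde R,\tilde f)$ fits the hypotheses of \Cref{AuxExistence}, apply it, and then unpack. The paper simply asserts that \eqref{B1}, \eqref{B2}, \eqref{B4} transfer to $\tilde R$ and that \eqref{B3} for $R$ yields \eqref{B3q} for $\tilde R$; you spell these out, and in particular your absorption with $c_0=\min\{1,\rho/(2h)\}$ giving $\tilde R(\eta,b)\ge c_0\big(R(\eta,b)+\|b\|_{L^2}^2\big)\ge c_0 K_R\|b\|_{W^{k_0,2}}^2$ is exactly the (unstated) content of that latter claim. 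The Young's-inequality manipulation of the energy inequality is identical to the paper's.

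One thing you do not flag: the displayed formulas \eqref{var-ineq-par} and \eqref{EI-par}, as printed, contain typos that your (correct) substitutions do not actually produce. In \eqref{var-ineq-par} the $D_2\tilde R$ double-counts the $\tfrac{\rho}{h}\langle\partial_t\eta,\varphi\rangle_{L^2}$ contribution already present in the term $\rho\tfrac{\partial_t\eta-\zeta}{h}$, and the inequality sign should be $\ge$ as inherited from \eqref{var-ineq}; in \eqref{EI-par} the $\tilde R$ should read $R$, otherwise the $\tfrac{\rho}{2h}\int\|\partial_t\eta\|^2$ term on the left is counted twice (and the stray square on $\langle f,\partial_t\eta\rangle_{L^2}$ should be removed). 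What your derivation in fact yields — and what the paper's own proof of this corollary yields, and what is used downstream in Step 3 of the proof of \Cref{main-thm-VI} — is the corrected inequality $E(\eta(t))+2\int_0^t R(\eta,\partial_t\eta)\,ds+\tfrac{\rho}{2h}\int_0^t\|\partial_t\eta\|_{L^2}^2\,ds\le E(\eta_0)+\int_0^t\langle f,\partial_t\eta\rangle_{L^2}\,ds+\tfrac{\rho}{2h}\int_0^t\|\zeta\|_{L^2}^2\,ds$. Ending with ``which yields \eqref{EI-par}'' without comment could make a reader think you misstepped, when in fact it is the printed statement that is slightly off, not your argument.
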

\begin{proof}
We note that since $R$ satisfies \eqref{B3} then \eqref{B3q} holds for $\tilde{R}$. Similarly, since \eqref{B1}, \eqref{B2} and \eqref{B4} hold for $R$, they also hold for $\tilde{R}$. As a result, we can apply \Cref{AuxExistence}.

What is left is the energy inequality. For this we note that rewriting the energy inequality for $\tilde{R}$ and $\tilde{f}$ in terms of $R$ and $f$ and using Young's inequality yields
\begin{align*}
 &\phantom{{}={}}E(\eta(t)) - E(\eta(0)) + 2 \int_0^t R(\eta(s),\partial_t \eta(s))+ \frac{\rho}{h} \norm[L^2]{\partial_t \eta(s)} \,ds   \le \int_0^t \left\langle f(s)+ \rho\tfrac{\zeta}{h}, \partial_t \eta(s)\right\rangle_{L^2} \,ds\\
 &\leq \int_0^t \left[\left\langle f(s), \partial_t \eta(s)\right\rangle_{L^2} + \frac{\rho}{2h}\norm[L^2]{\zeta(s)}^2 + \frac{\rho}{2h}\norm[L^2]{\partial_t\eta(s)}^2 \right] ds.
\end{align*}
Rearranging the terms on both sides then results in the claim.
\end{proof}

\subsection{Existence of the contact force}
We now refine the results of the previous section to show that the inequality \eqref{var-ineq} can be characterized by the existence of a contact force.
\begin{definition}[Solution with a contact force]
\label{sol-def-aux-measure}
Let $\eta_0 \in \E \cap W^{k_0, 2}(Q; \RR^n)$ and $f \in L^2((0, h); L^2(Q; \RR^n))$ be given. We say that 
\[
\eta \in W^{1,2}((0, h); W^{k_0,2}(Q; \RR^n)) \cap L^\infty((0, h); \E \cap W^{k_0, 2}(Q; \RR^n)), \quad \sigma \in L_{w^*}^2((0,h);M(\partial Q; \RR^n)),
\]
where $\sigma$ is a contact force for $\eta$ (see Definition \ref{def-contact-force}), is a \emph{solution with a contact force} to \eqref{aux-Pb} if $\eta(0) = \eta_0$ and 
\begin{equation}
\label{eqn:measure-eqn}
\int_0^h \left[DE(\eta(t)) + D_2R(\eta(t), \partial_t \eta(t))\right] \langle \varphi(t)\rangle\,dt = \int_0^h \langle \sigma(t),\varphi(t)\rangle \,dt + \int_0^h \langle f(t), \varphi(t) \rangle_{L^2}\,dt
\end{equation}
for all $\varphi \in C([0,h]; W^{k_0,2}_\Gamma(Q;\RR^n))$.
\end{definition}

\begin{thm}
\label{existence-CF-quasi}
Let $E$ satisfy \eqref{A1}--\eqref{A5}, $R$ satisfy \eqref{B1}, \eqref{B2}, \eqref{B3q} as well as \eqref{B4}. 
Given $\eta_0\in\E\cap W^{k_0,2}(Q;\RR^n)$ and $f \in L^2((0, h); L^2(Q; \RR^n))$, there exists a solution with a contact force to \eqref{aux-Pb}. Moreover, $\eta$ satisfies the energy inequality \eqref{EI} for all $t \in [0, h]$, and the contact force satisfies $\sigma\in L_{w^*}^2((0, h);M(\partial Q;\RR^n))$ with estimate
\begin{equation}\label{cforce-h-estimate}
\int_0^h \|\sigma(t)\|_{M(\partial Q;\RR^n)}^2 dt \leq C_{E_0} \left(h + \|\partial_t\eta\|_{L^2(W^{1,2})}^2 +\|f\|_{L^2(L^2)}^2\right).
\end{equation}
\end{thm}
\begin{rmk}
If we applied the Lagrange multiplier theorem to our constrained minimization problem, we would get the contact ``\,force'' only in the form of a distribution, namely as an element of $(W^{k_0,2}(Q;\RR^n))^*$. An estimate in the same space for the time-continuous, parabolic solution is directly available by using the equation, cf. \cite{kromerQuasistaticViscoelasticitySelfcontact2019}. However \cite{PalmerHealey} proved that in the steady case, one in fact obtains a measure--valued contact force in the sense our definition. We will use their argument here, but in addition to that we employ a quantitative estimate on the norm of the measure to retain the desired regularity when passing to the limit as $\tau\to 0$.\end{rmk}
\begin{proof}
We use the approximation $\eta_\tau$ resp.\@ $\overline\eta_\tau$ and all other relevant notation from the proof of \Cref{AuxExistence}. Fix $k\in \{1,\dots,M \}$ and recall that we have that
\begin{equation*}
D\mathcal{J}_k(\eta_k) = DE(\eta_k)+D_2 R\left(\eta_{k-1},\tfrac{\eta_k-\eta_{k-1}}{\tau}\right)-f_k \in (W^{k_0,2}(Q;\RR^n))^*.
\end{equation*}
\newline\emph{Step 1: } By the assumptions on $\eta_0|_\Gamma$ (see \Cref{rmk:DirichletBoundary}) and the local injectivity, we can pick a compact subset $K\subset \partial Q$ such that $\eta(t)|_{\partial Q \setminus K}$ is always injective. Then by the action-reaction principle, estimating $\|\sigma_k \|_{M(K;\RR^n)}$ is enough to estimate $\|\sigma_k\|_{M(\partial Q;\RR^n)}$. Without loss of generality, we can choose $\xi_\Gamma \in C^\infty_\Gamma(Q;[0,1])$ such that $\xi_\Gamma(x) = 1$ for all $x \in K$.
We will now follow\footnote{To avoid confusion, we note that since we use interior normals, some signs and inequalities have to be turned around.} \cite{PalmerHealey} to get the existence of a contact force for the auxiliary problem.
Let us denote
\begin{align*}
M^+_{\eta_k}& \coloneqq \Biggr\{ (\ell,w)\in\RR \times C(\partial Q): \exists \varphi \in W^{k_0,2}_{\Gamma}(Q,\RR^n) \text{ s.t. } D\mathcal{J}_k(\eta_k) \langle\varphi \rangle \leq \ell \\ & \qquad \text{ and } \forall x \in C_{\eta_k} \text{ we have } \sum_{z \in \eta^{-1}(\eta(x))} \varphi(z)\cdot n_{\eta_k}(z) \geq \sum_{z\in \eta^{-1}(\eta(x))} w(z) \Biggr\}, \\
M^- & \coloneqq \{(\ell,w)\in\RR\times C(\partial Q): \ell\leq 0, w\geq 0\}.
\end{align*}
Now, if $\ell<0$ and $w>0$, then any $\varphi \in W_{\Gamma}^{k_0,2}(Q;\RR^n)$ satisfying 
\begin{equation*}
\sum_{z\in \eta^{-1}(\eta(z)))} \varphi(z)\cdot n_{\eta_k}(z)  \geq \sum_{z\in \eta^{-1}(\eta(x))} w(z)>0
\end{equation*}
for all $x \in C_\eta$ is an admissible test function by \Cref{T-char}. Hence $\varphi$ satisfies (by the previous proof) the variational inequality
\begin{equation*}
 D\mathcal{J}_k(\eta_k)\langle\varphi \rangle \geq 0 > \ell,
\end{equation*}
and thus $(\ell,w)\notin M^+_{\eta_k}$. This means that $M^+_{\eta_k} \cap \operatorname{int} M^- =\emptyset$. Since both sets are convex, we can find a separating hyperplane. That is, there exists $(\lambda_0,\sigma_0)\in(\RR\times C(\partial Q))^* = \RR\times M(\partial Q)$ with $(\lambda_0, \sigma_0) \neq (0,0)$ such that
\begin{align}
\lambda_0 \ell + \langle \sigma_0, w\rangle &\leq 0, \quad (\ell,w)\in M^+_{\eta_k}, \label{M+ineq} \\
\lambda_0 \ell + \langle \sigma_0, w \rangle &\geq 0, \quad (\ell,w)\in M^-. \notag
\end{align}
The latter inequality shows that for every $w\geq 0$, since $(0, w)\in M^-$, we have that $\langle \sigma_0,w \rangle \geq 0$. Hence the measure $\sigma_0 \in M^+(\partial Q)$ is non-negative. Furthermore, for any $w\in C(\partial Q)$ with $w\geq 0$ and $\supp w\cap C_{\eta_k}=\emptyset$ we see immediately that $(0,w)\in M_{\eta_k}^+\cap M^-$ and thus $\langle \sigma_0, w\rangle =0$. This proves that $\supp \sigma_0\subset C_{\eta_k}$.

Now, given $\varphi\in W^{k_0,2}_\Gamma(Q;\RR^n)$ we have that $( D\mathcal{J}_k(\eta_k)\langle\varphi\rangle, n_{\eta_k}\cdot \varphi )\in M^+_{\eta_k}$, and so 
\begin{equation*}
\lambda_0 D\mathcal{J}_k(\eta_k)\langle\varphi\rangle + \langle \sigma_0, n_{\eta_k}\cdot \varphi\rangle \leq 0.
\end{equation*}
Repeating the argument with $-\varphi$ in place of $\varphi$ we obtain that
\begin{equation}\label{eqn:AuxMeasurek}
\lambda_0 D\mathcal{J}_k(\eta_k)\langle\varphi\rangle + \langle \sigma_0, n_{\eta_k}\cdot \varphi\rangle = 0.
\end{equation}

Fix $\delta>0$ and denote $\varphi_\delta =\delta \xi_\Gamma \tilde n_{\eta_k}$, where $\tilde n_{\eta_k}$ is the smooth almost normal from Corollary \ref{smooth-almost-normal-notime}. Then $\varphi_\delta \in C^2_\Gamma(\overline Q; \RR^n)$ and moreover $\|\varphi_\delta \|_{W^{k_0,2}(Q;\RR^n)}\leq \tilde C\delta$, where by the estimate from \Cref{smooth-almost-normal} and the energy estimate \eqref{ape-1}, $\tilde C$ depends only on $C_0=E(\eta_0)+\|f\|_{L^2(L^2)}$. Then, for every $x\in C_{\eta_k}\cap K$, we have that

\begin{equation*}
\sum_{z\in\eta^{-1}(\eta(x))}
\varphi_\delta (z)\cdot n_{\eta_k}(z) = \delta \sum_{z\in\eta^{-1}(\eta(x))} \tilde n_{\eta_k}(z)\cdot n_{\eta_k}(z) \geq \frac{\delta}{2}.
\end{equation*}

This shows that $( D\mathcal{J}_k(\eta_k)\langle\varphi_\delta \rangle, \delta/2 ) \in M^+_{\eta_k}$, where $\delta/2$ is treated as a constant function. Hence by \eqref{M+ineq} we obtain that
\begin{equation}\label{normal-separates}
\lambda_0  D\mathcal{J}_k(\eta_k)\langle \varphi_\delta \rangle +\langle \sigma_0, \delta/2 \rangle \leq 0.
\end{equation}
From this we can see that $\lambda_0<0$. Indeed, if $\lambda_0=0$, then $\sigma_0\neq 0$ and $\langle \sigma_0, \delta/2\rangle >0$, and we thus reach a contradiction; if $\lambda_0>0$, take $\ell \geq  D\mathcal{J}_k(\eta_k)\langle \varphi_\delta \rangle$ with $\ell>0$. Then $(\ell,\delta/2)\in M^+_{\eta_k}$ and $\lambda_0 \ell + \langle \sigma_0, \delta/2\rangle >0$, yielding a contradiction in this case as well.

Therefore, we can define $\sigma_k\in M(\partial Q;\RR^n)$ via $d\sigma_k=-\frac{1}{ \lambda_0}n_{\eta_k}\,d\sigma_0$, which is a contact force for $\eta_k$ since we have verified that $\lambda_0<0$, $\sigma_0\in M^+(\partial Q)$ and $\supp \sigma_0\subset C_{\eta_k}$. 
\newline
\emph{Step 2:}
Now we estimate
\begin{equation*}
\|\sigma_k\|_{M(\partial Q;\RR^n)} \leq 2\|\sigma_k\|_{M(K;\RR^n)} \leq-\frac{2}{\lambda_0}\langle \sigma_0,\xi_\Gamma\rangle \leq \frac{4}{\delta}D\mathcal{J}_k(\eta_k)\langle \varphi_\delta \rangle.
\end{equation*}

Setting $\sigma_\tau(t)\coloneqq\sigma_k$ on $t\in[(k-1)\tau,k \tau)$, we multiply this last inequality by $\psi\in C([0,h]; \RR^+)$ and integrate over $[0,h]$ to obtain
\begin{align*}
\int_0^h \norm[M(\partial Q;\RR^n)]{\sigma_\tau(t)}\psi(t) \,dt & \leq 4\int_0^h  D\mathcal{J}_{\lfloor t/\tau \rfloor}(\overline \eta_\tau(t)) \langle \xi_\Gamma \tilde n_{\overline\eta_\tau}(t) \rangle \psi(t) \,dt  \\
\leq 4\int_0^h &\left( [DE (\overline\eta_\tau(t)) + D_2R(\underline\eta_\tau(t), \partial_t\eta_\tau(t))]\langle \xi_\Gamma \tilde n_{\overline\eta_\tau}(t) \rangle - \langle  f(t), \xi_\Gamma \tilde n_{\overline\eta_\tau} (t)\rangle_{L^2} \right) \psi(t) \,dt. 
\end{align*}
Since by \eqref{A5} $DE$ is bounded on sublevel sets of $E$, in view of \Cref{smooth-almost-normal} we obtain that
\begin{equation*}
\int_0^h DE (\overline\eta_\tau(t))\langle \xi_\Gamma \tilde n_{\overline\eta_\tau}(t) \rangle \psi(t)\,dt \leq C_{k_0} \max_{k} \|DE(\eta_k)\|_{(W^{k_0,2})^*} \int_0^h \psi(t)\,dt.
\end{equation*}
Furthermore, by \eqref{eq:D2Rgrowthbounds} we arrive at
\begin{equation*}
\int_0^h  D_2 R(\underline \eta_\tau (t),\partial_t\eta_\tau(t))\langle \xi_\Gamma \tilde n_{\overline \eta_\tau} (t)\rangle \psi(t) \,dt \leq  C_{E_0} \int _0^h \|\partial_t\eta_\tau\|_{W^{1,2}}\psi(t) \,dt.
\end{equation*}
Finally, since the remaining term satisfies the estimate
\begin{equation*}
\int_0^h \langle f(t), \xi_\Gamma \tilde n_{\overline \eta_\tau}(t) \rangle \psi(t) \,dt \leq C_{E_0}\int_0^h \|f(t)\|_{L^2} \psi(t)\,dt,
\end{equation*}
we conclude that
\begin{equation}\label{cforce-tau-estimate}
\int_0^h \|\sigma_\tau(t)\|_{M(\partial Q;\RR^n)}^2 dt \leq C_{E_0} \left(h + \|\partial_t\eta_\tau\|_{L^2(W^{1,2})}^2 +\|f\|_{L^2(L^2)}^2\right). 
\end{equation}
In view of the energy estimate, the right-hand side in \eqref{cforce-tau-estimate} is bounded by a constant (independent of $\tau$).
This means that we have estimated $\sigma_\tau$ in $L_{w^*}^2((0,h);M(\partial Q;\RR^n))$, so in particular also in $M([0,h]\times \partial Q; \RR^n)$. 
\newline
\emph{Step 3: } Upon dividing by $-\lambda_0$ and summing over $k$, \eqref{eqn:AuxMeasurek} can be rewritten as
\begin{equation*}
\int_0^h D\mathcal{J}_{\lfloor t/\tau \rfloor}(\overline \eta_\tau(t)) \langle \varphi\rangle - \langle \sigma_\tau(t), \varphi\rangle \,dt = 0.
\end{equation*}
By compactness of contact forces (see \Cref{compactness-cforce}) we can find a converging subsequence (which we do not relabel) such that
\begin{equation*}
\sigma_\tau \weakstarto \sigma \quad \text{ in } M([0,h] \times \partial Q;\RR^n), 
\end{equation*}
where $\sigma$ is a contact force for $\eta$. By this convergence, upon sending $\tau \to 0$, the equation \eqref{eqn:measure-eqn} holds for all $\varphi\in W_{\Gamma}^{k_0,2}(Q;\RR^n)$, and by a density argument for all $\varphi\in C([0,h],W_{\Gamma}^{k_0,2}(Q;\RR^n))$.
Moreover, thanks to \eqref{cforce-tau-estimate}, we are in a position to apply \Cref{cforce-l2intime} to conclude that $\sigma\in L_{w^*}^2((0, h);M(\partial Q;\RR^n))$ and that it satisfies \eqref{cforce-h-estimate}. This concludes the proof.
\end{proof}

We now state a version of \Cref{par-approx-cor} that includes contact forces.

\begin{cor}
\label{par-cforce-cor}
Let $E$ and $\eta_0$ be given as above and assume that for some $h>0$
\begin{equation}
\tilde{R}(\eta, b) \coloneqq R(\eta, b) + \frac{\rho}{2h}\|b\|_{L^2}^2,
\end{equation}
where $R$ satisfies \eqref{B1}--\eqref{B4}.
Define additionally $\tilde{f} \coloneqq f + \rho\frac{\zeta}{h}$ for some $\zeta \in L^2((0,h); L^2( Q;\RR^n))$. Then the conclusions of \Cref{existence-CF-quasi} continue to hold. In particular, this yields the existence of a weak solution with a contact force to \eqref{aux-par} (in the sense of \Cref{sol-def-aux-measure}), where \eqref{eqn:measure-eqn} can be rewritten as
\begin{multline}
\label{weak-cor}
\int_0^h \left[DE(\eta(t)) + D_2R(\eta(t), \partial_t \eta(t))\right] \langle \varphi(t)\rangle\,dt \\ + \int_0^h\rho \left \langle \frac{\partial_t \eta(t)-\zeta(t)}{h}, \varphi(t) \right \rangle_{L^2} \,dt = \int_0^h \langle \sigma(t), \varphi(t)\rangle \, dt + \int_0^h \langle f(t), \varphi(t) \rangle_{L^2}\,dt.
\end{multline}
Furthermore, for every $t \in [0,h]$, $\eta$ satisfies the energy inequality in \eqref{EI-par}
and the contact force satisfies $\sigma \in L_{w^*}^2((0, h);M(\partial Q;\RR^n))$ with
\begin{equation}
\int_0^h \|\sigma(t)\|_{M(\partial Q;\RR^n)}^2 dt \leq C_{E_0}\left(h + \frac{\|\zeta\|_{L^2(W^{1,2})}^2}{h} + \|f\|_{L^2(L^2)}^2\right).
\end{equation}
\end{cor}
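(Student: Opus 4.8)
The strategy is to obtain \Cref{par-cforce-cor} from \Cref{existence-CF-quasi} in exactly the way \Cref{par-approx-cor} was obtained from \Cref{AuxExistence}: apply the quasistatic-with-contact-force theorem to the modified pair $(\tilde R,\tilde f)$ with $\tilde f = f+\rho\zeta/h$, and then translate the conclusions back into statements about $(R,f,\zeta)$. The first point to check is that $\tilde R = R + \frac{\rho}{2h}\|\cdot\|_{L^2}^2$ satisfies the hypotheses of \Cref{existence-CF-quasi}. Conditions \eqref{B1}, \eqref{B2}, \eqref{B4} are inherited from $R$ because the added term is a continuous $2$-homogeneous quadratic form whose derivative $b\mapsto\frac\rho h\langle b,\cdot\rangle_{L^2}$ is bounded and, by the compact embedding $W^{k_0,2}\hookrightarrow L^2$, weakly continuous in both arguments; and since $R$ satisfies \eqref{B3}, one gets $K_{\tilde R}\|b\|_{W^{k_0,2}}^2\le\tilde R(\eta,b)$, i.e.\ \eqref{B3q} holds for $\tilde R$ (the constant $K_{\tilde R}$ degenerates as $h\to 0^+$, but this is harmless here since $h$ is fixed) — the same observation already used in the proof of \Cref{par-approx-cor}. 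Finally $\tilde f\in L^2((0,h);L^2(Q;\RR^n))$. Hence \Cref{existence-CF-quasi} provides $\eta$ and a contact force $\sigma\in L^2((0,h);M(\partial Q;\RR^n))$ for $\eta$ with $\eta(0)=\eta_0$, satisfying \eqref{eqn:measure-eqn} (with $\tilde R,\tilde f$), the energy inequality $E(\eta(t))+2\int_0^t\tilde R(\eta,\partial_t\eta)\le E(\eta_0)+\int_0^t\langle\tilde f,\partial_t\eta\rangle_{L^2}$, and a contact-force bound.

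The algebraic conclusions are then immediate. Using $D_2\tilde R(\eta,b)=D_2R(\eta,b)+\frac\rho h\langle b,\cdot\rangle_{L^2}$ and $\tilde f=f+\rho\zeta/h$, the identity \eqref{eqn:measure-eqn} rearranges into \eqref{weak-cor}. For \eqref{EI-par}, expand $2\tilde R=2R+\frac\rho h\|\partial_t\eta\|_{L^2}^2$ and estimate $\langle\tilde f,\partial_t\eta\rangle_{L^2}=\langle f,\partial_t\eta\rangle_{L^2}+\frac\rho h\langle\zeta,\partial_t\eta\rangle_{L^2}\le\langle f,\partial_t\eta\rangle_{L^2}+\frac\rho{2h}\|\zeta\|_{L^2}^2+\frac\rho{2h}\|\partial_t\eta\|_{L^2}^2$ by Young's inequality; absorbing $\frac\rho{2h}\int_0^t\|\partial_t\eta\|_{L^2}^2$ on the left produces \eqref{EI-par}, exactly as in \Cref{par-approx-cor}.

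The only part that is not a verbatim transcription is the quantitative contact-force estimate, because the constant in \eqref{cforce-h-estimate} depends on the growth constant of $D_2(\cdot)$, which for $\tilde R$ carries a factor $\rho/h$. I would therefore revisit the contact-force portion of the proof of \Cref{existence-CF-quasi} with $\tilde{\J}_k$ in place of $\J_k$: the Hahn–Banach separation argument producing $\|\sigma_k\|_M\le 2\,D\tilde{\J}_k(\eta_k)\langle\tilde n_{\eta_k}\rangle$, with $\tilde n_{\eta_k}$ the smooth almost normal of \Cref{smooth-almost-normal-notime}, goes through unchanged, as it uses only convexity of the relevant cones and the variational inequality \eqref{var-ineq-par}, both available for $\tilde R$. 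Expanding
\[
D\tilde{\J}_k(\eta_k)\langle\tilde n_{\eta_k}\rangle = DE(\eta_k)\langle\tilde n_{\eta_k}\rangle + D_2R\!\left(\eta_{k-1},\tfrac{\eta_k-\eta_{k-1}}{\tau}\right)\!\langle\tilde n_{\eta_k}\rangle + \tfrac\rho h\Big\langle\tfrac{\eta_k-\eta_{k-1}}{\tau}-\zeta_k,\tilde n_{\eta_k}\Big\rangle_{L^2} - \langle f_k,\tilde n_{\eta_k}\rangle_{L^2},
\]
one bounds the first term via \eqref{A5} and the discrete energy estimate, the second via \eqref{eq:D2Rgrowthbounds}, and the remaining $L^2$-pairings by Cauchy–Schwarz together with $\|\tilde n_{\eta_k}\|_{C^{k_0}}\le C_{k_0}$; squaring, integrating in time, and feeding in the energy inequality \eqref{EI-par} to control $\|\partial_t\eta_\tau\|_{L^2(W^{1,2})}^2$ (through \eqref{B3}) and the combination $\tfrac\rho h(\partial_t\eta_\tau-\zeta_\tau)$ yields, after passing to the limit $\tau\to0^+$ via \Cref{compactness-cforce} and \Cref{cforce-l2intime}, the asserted bound $\int_0^h\|\sigma(t)\|_{M(\partial Q;\RR^n)}^2\,dt\le C_{E_0}\big(h+\tfrac{1}{h}\|\zeta\|_{L^2(W^{1,2})}^2+\|f\|_{L^2(L^2)}^2\big)$. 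The main obstacle is precisely this bookkeeping of the $h$-dependence: the two $\rho/h$-weighted contributions — one from the $\frac\rho{2h}\|\cdot\|_{L^2}^2$ part of $\tilde R$, one from $\rho\zeta/h$ in $\tilde f$ — must be recombined into the single term $\tfrac1h\|\zeta\|_{L^2(W^{1,2})}^2$ rather than a worse negative power of $h$, which is why one keeps $\partial_t\eta_\tau-\zeta_\tau$ together (it is the quantity controlled by the equation) instead of estimating the two velocities separately.
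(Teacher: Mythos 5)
Your proof takes the same route the paper does: the paper's proof of \Cref{par-cforce-cor} is a single sentence asserting it is ``an immediate consequence of \Cref{existence-CF-quasi} and \Cref{par-approx-cor}, thus we omit the details,'' and your argument supplies exactly those omitted details. The verification that $(\tilde R,\tilde f)$ satisfies the hypotheses of \Cref{existence-CF-quasi}, the algebraic rearrangement of the weak equation into \eqref{weak-cor}, and the derivation of \eqref{EI-par} via Young's inequality all line up precisely with what the paper sketches in the proof of \Cref{par-approx-cor}.

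The one place where you go beyond ``immediate consequence'' is the quantitative contact-force bound, and you are right to flag it: if one were to apply \eqref{cforce-h-estimate} to $(\tilde R,\tilde f)$ and simply crudely insert $\|\tilde f\|_{L^2(L^2)}^2\lesssim \|f\|^2_{L^2(L^2)}+h^{-2}\|\zeta\|^2_{L^2(L^2)}$, one would \emph{not} land on the stated form $h^{-1}\|\zeta\|^2_{L^2(W^{1,2})}$; moreover the constant $C_{E_0}$ in \eqref{cforce-h-estimate} implicitly depends on the growth constant of $D_2(\cdot)$, which for $\tilde R$ carries an extra $\rho/h$. Your fix --- rerunning the discrete separation argument with $\tilde{\J}_k$, expanding $D\tilde\J_k(\eta_k)\langle\tilde n_{\eta_k}\rangle$, and keeping $\rho(\partial_t\eta_\tau-\zeta_\tau)/h$ as a single quantity rather than estimating the two velocities separately --- is the right idea, and is consistent with the treatment in Step 6 of the proof of \Cref{main-thm-VI}, where the analogous time-delayed term is controlled through the dual-norm bound \eqref{dteta-negative-estimate} rather than by pushing $\rho/h$-factors through \eqref{cforce-h-estimate} blindly. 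So your proposal not only matches the paper's intent but also makes explicit a bookkeeping step the paper leaves implicit.
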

\begin{proof}
The result is an immediate consequence of \Cref{existence-CF-quasi} and \Cref{par-approx-cor}, thus we omit the details.
\end{proof}

\section{Proof of the main theorem}
\label{inertia-sec}
In this section we present the proof of \Cref{main-thm-VI}. Additionally we conclude the section by returning to the corresponding quasistatic problem. In particular, we present an improvement of the existence result of a solution with contact force that was previously obtained by Kr\"{o}mer and Roub\'{i}\v{c}ek in \cite{kromerQuasistaticViscoelasticitySelfcontact2019}. 

\begin{proof}[Proof of \Cref{main-thm-VI}] We divide the proof into several steps.
\newline
\emph{Step 1:} Given $L \in \NN$, we set $h \coloneqq T/L$ and decompose $[0, T]$ into subintervals $[(j - 1)h, jh]$, with $j \ge 1$, of length $h$. Let $E^{(h)} \colon W^{k_0, 2}(Q; \RR^n) \to (-\infty, \infty]$ be defined via
\begin{equation}
\label{Eh}
E^{(h)}(\eta) \coloneqq E(\eta) + h^{a_0} \|\nabla^{k_0} \eta\|_{L^2}^2,
\end{equation}
where $k_0 - \frac{n}{2}  > 2 - \frac{n}{p}$ and $a_0 > 0$ is to be determined. Let $\eta_0$ be given as in the statement. In this step we prove that there exists a sequence $\{\eta_0^{(h)}\}_h \subset W^{k_0, 2}(Q; \RR^n)$ such that $\eta_0^{(h)} \to \eta_0$ in $W^{2, p}(Q; \RR^n)$ as $h \to 0$, and for all but finitely many values of $h$ we have that $\eta_0^{(h)}(Q) \subset \Omega$ and $\eta_0^{(h)}$ is a.e.\@ globally injective on $\overline{Q}$. Furthermore, we show that this sequence can be chosen with the property that 
\begin{equation}
\label{initial-rec}
E^{(h)}(\eta_0^{(h)}) \to E(\eta_0).
\end{equation}
We present the proof for the case $\eta_0 \in \partial \E$. Indeed, if $\eta_0 \in \operatorname{int}\E$ the proof follows from a similar but simpler argument. We begin by considering perturbations of the initial condition of the form $\eta_0 + h^{b_0} \tilde{n}_{\eta_0}$, with $b_0 > 0$, where $\tilde{n}_{\eta_0}$ is given as in \Cref{smooth-almost-normal-notime}. Reasoning as in \Cref{T-char} (see also Proposition 3 in \cite{PalmerHealey}), if $h$ is sufficiently small we have that $(\eta_0 + h^{b_0} \tilde{n}_{\eta_0})(Q) \subset \Omega$ and furthermore $\eta_0 + h^{b_0} \tilde{n}_{\eta_0}$ is a.e.\@ globally injective on $\overline{Q}$. Recall that $\eta_0 + h^{b_0} \tilde{n}_{\eta_0}$ is injective in $B_r(x) \cap \overline{Q}$ for all $x$ provided that $r$ is given as in \Cref{IFT}. Moreover, if $h$ is sufficiently small, for $x, y$ with $|x - y| \ge r$ we have that 
\[
| \eta_0(x) + h^{b_0} \tilde{n}_{\eta_0}(x) - \eta_0(y) - h^{b_0} \tilde{n}_{\eta_0}(y)| \ge ch^{b_0} 
\]
for some $c > 0$. Similarly, $\dist(\eta_0(x) + h^{b_0} \tilde{n}_{\eta_0}(x), \partial \Omega) \ge ch^{b_0}$ for all $x \in \overline{Q}$. This implies that there exist a smooth open set $Q'$ such that $Q \subset Q'$ and $\dist(Q, \partial Q') \ge c h^{b_0}$, and an extension of $\eta_0 + h^{b_0} \tilde{n}_{\eta_0}$ to $Q'$ (not relabeled) with the property that $(\eta_0 + h^{b_0} \tilde{n}_{\eta_0})(Q') \subset \Omega$ and such that $\eta_0 + h^{b_0} \tilde{n}_{\eta_0}$ is a.e.\@ globally injective on $\overline{Q'}$. Fix $b_1 > b_0$ and consider 
\begin{equation}
\label{moll-def}
\xi_h(x) \coloneqq \frac{1}{h^{n b_1}}\xi \left(\frac{x}{h^{b_1}}\right),
\end{equation}
where $\xi$ is the standard mollifier. We then define 
\begin{equation}
\label{eta-0h-def}
\eta_0^{(h)}(x) \coloneqq [(\eta_0 + h^{b_0} \tilde{n}_{\eta_0}) \ast \xi_h](x)
\end{equation}
and claim that $\eta_0^{(h)}$ has the desired properties for all $h$ sufficiently small. Since $\eta_0^{(h)} \to \eta_0$ in $W^{2, p}(Q; \RR^n)$, \eqref{E4} implies that $E(\eta_0^{(h)}) \to E(\eta_0)$. Moreover, as one can readily check, we have that 
\begin{equation}
\label{moll-eta0}
\|\nabla^{k_0} \eta_0^{(h)}\|_{L^2} \le C h^{b_1 (2 - k_0)} (\|\eta_0\|_{W^{2, p}} + h^{b_0}\|\tilde{n}_{\eta_0}\|_{C^2}).
\end{equation}
Consequently, if $a_0 > b_1(k_0 - 2)$, by \eqref{moll-eta0} we obtain that 
\[
h^{a_0}\|\nabla^{k_0} \eta_0^{(h)}\|_{L^2} \to 0
\]
as $h \to 0$, thus proving \eqref{initial-rec}. 
\newline
\emph{Step 2:} Next, we consider the time-delayed equation
\begin{equation}
\label{RTD}
\rho\frac{\partial_t \eta(t) - \partial_t \eta(t-h)}{h} + DE^{(h)}(\eta(t))+D_2 R^{(h)}(\eta(t), \partial_t\eta(t)) = f(t),
\end{equation}
on the interval $[0, h]$, where $E^{(h)}$ is defined as in \eqref{Eh} and
\[
R^{(h)}(\eta, b) \coloneqq R(\eta, b) + h \|\nabla^{k_0} b \|_{L^2}^2.
\]
Problem \eqref{RTD} is complemented by the initial condition $\eta(0) = \eta_0^{(h)}$, with $\eta_0^{(h)}$ given as in the previous step. Moreover, we define $\partial_t \eta(t) \coloneqq \eta^*$ for all $t < 0$. Notice that \eqref{RTD} can be recast as 
\begin{equation}
\label{tdp}
D\tilde{E}(\eta)+ D_2 \tilde{R}(\eta, \partial_t\eta) = \tilde{f},
\end{equation}
where 
\begin{equation}
\label{ERf-def}
\tilde{E}(\eta) \coloneqq E^{(h)}(\eta), \quad \tilde{R}(\eta, b) \coloneqq R^{(h)}(\eta, b) + \frac{1}{2h}\|b\|^2_{L^2}, \quad \tilde{f}(t) \coloneqq f(t) + \frac{\rho}{h}\partial_t \eta(t - h).
\end{equation}
As one can readily check, the energy-dissipation pair $(\tilde{E}, \tilde{R})$ satisfies the assumptions of \Cref{Aux-sec}. Thus, an application of \Cref{par-approx-cor} yields the existence of a solution to \eqref{tdp} in $[0, h]$, denoted $\eta^{(h)}_1$. For $j \ge 2$, we can then iteratively construct solutions to \eqref{tdp} in each of the intervals $[(j-1)h, jh]$, with the difference, however, that the initial condition for $\eta$ and the time-shifted derivative $\partial_t \eta(t - h)$ are given by the solution in the previous time interval. To be precise, $\eta^{(h)}_j$ is a solution to \eqref{tdp} on the interval $[(j - 1)h, jh]$ with initial condition $\eta^{(h)}_j((j-1)h) = \eta^{(h)}_{j - 1}((j-1)h)$ and forcing term (see \eqref{ERf-def}) given by 
\[
\tilde f(t) \coloneqq f(t) + \frac{\rho}{h}\partial_t \eta^{(h)}_{j - 1}(t-h).
\] 
Note that the result of each such step yields well posed data for the next step by the energy estimate \eqref{EI-par}.

We then define
\begin{equation}
\label{etah-def}
\eta^{(h)}(t) \coloneqq \eta^{(h)}_j(t) 
\end{equation}
for all $t \in [(j-1)h, jh]$.
\newline
\emph{Step 3:} Recall that each of the $\eta^{(h)}_j$ satisfies the energy inequality 
\eqref{EI-par} with the respective initial time $(j - 1)h$ and corresponding initial data. 
Fix $t \in [(j - 1)h, jh]$. Adding together the energy estimate for $\eta^{(h)}_j$ as well as all those for the previous steps with respect to the final time of the corresponding interval, we end up with
\begin{align} 
E^{(h)}(\eta^{(h)}(t)) & + \int_0^t2 R^{(h)}(\eta^{(h)}(s), \partial_t \eta^{(h)}(s))\,ds + \frac{\rho}{2h} \int_{t - (j-1)h}^t \|\partial_t \eta^{(h)}(s)\|_{L^2}^2\,ds \notag \\ 
& \le E^{(h)}(\eta_0^{(h)}) + \frac{\rho}{2}\|\eta^*\|^2_{L^2} + \int_0^t \langle f(s), \partial_t \eta^{(h)}(s) \rangle_{L^2} \,ds \notag \\
& \le E^{(h)}(\eta_0^{(h)}) + \frac{\rho}{2}\|\eta^*\|^2_{L^2} + \frac{1}{2\delta}\|f\|_{L^2(L^2)}^2 + \frac{\delta}{2} \int_0^t \|\partial_t \eta^{(h)}(s)\|_{L^2}^2\,ds \notag \\
& \le C + \frac{\delta}{2} \int_0^T \|\partial_t \eta^{(h)}(s)\|_{L^2}^2\,ds, \label{tdEE}
\end{align}
where $C$ is a constant that depends only on $\eta_0$, $\eta^*$, $\rho$, $T$, and $f$. In view of \eqref{E1} and recalling that $R^{(h)}$ is nonnegative, the previous inequality implies that 
\begin{equation}
\label{bdE}
\frac{\rho}{2h} \int_{t - (j-1)h}^t \|\partial_t \eta^{(h)}(s)\|_{L^2}^2\,ds \le C + \frac{\delta}{2} \int_0^T \|\partial_t \eta^{(h)}(s)\|_{L^2}^2\,ds.
\end{equation}
If we now let $t = T$, by multiplying both side of \eqref{bdE} by $h$ and summing over $j$ we obtain
\[
\frac{\rho}{2} \int_0^T \|\partial_t \eta^{(h)}(s)\|_{L^2}^2\,ds \le Lh\left(C + \frac{\delta}{2} \int_0^T \|\partial_t \eta^{(h)}(s)\|_{L^2}^2\,ds\right).
\]
Recalling that $Lh = T$ and setting $\delta \coloneqq T/2$ gives the bound
\begin{equation}
\label{dteta-bound}
\int_0^T \|\partial_t \eta^{(h)}(s)\|_{L^2}^2\,ds \le C,
\end{equation}
where $C$ is a constant independent of $h$. Furthermore, as an immediate consequence of \eqref{tdEE} and \eqref{dteta-bound}, we obtain
\begin{equation}
\label{unifEbound}
\begin{aligned}
\sup_{t \in [0,T]} \left\{E^{(h)}(\eta^{(h)}(t)) + \frac{\rho}{2h} \int_{t - h}^t \|\partial_t \eta^{(h)}(s)\|_{L^2}^2\,ds  \right\} & \le C, \\
\int_0^T R^{(h)}(\eta^{(h)}(s), \partial_t \eta^{(h)}(s))\,ds & \le C.
\end{aligned}
\end{equation}
In view of \eqref{ERf-def}, \eqref{unifEbound} implies that
\begin{equation}
\label{gradu'}
\int_0^T\|\nabla \partial_t \eta^{(h)}(s)\|^2_{L^2} + h \|\nabla^{k_0} \partial_t \eta^{(h)}(s)\|^2_{L^2}\,ds \le C.
\end{equation}
From \eqref{unifEbound} we see that $\{\eta^{(h)}\}_{h}$ is bounded in $L^{\infty}((0, T); W^{2, p}(Q; \RR^n))$. Furthermore, \eqref{unifEbound} and \eqref{gradu'} imply that $\partial_t \eta^{(h)}$ is uniformly bounded in $L^2((0, T); W^{1, 2}(Q; \RR^n))$, from which it follows that $\eta^{(h)}$ is bounded in $W^{1,2}((0, T); W^{1,2}(Q; \RR^n))$. In particular, we obtain that (eventually extracting a subsequence, which we do not relabel) 
\begin{equation}
\label{etah-to-eta-1}
\arraycolsep=1.4pt\def\arraystretch{1.6}
\begin{array}{rll}
\eta^{(h)} & \overset{\ast}{\rightharpoonup} \eta & \text{ in } L^{\infty}((0, T); W^{2, p}(Q; \RR^n)), \\
\eta^{(h)} & \rightharpoonup \eta & \text{ in } W^{1,2}((0, T); W^{1,2}(Q; \RR^n)),
\end{array}
\end{equation}
to some function $\eta \in L^{\infty}((0, T); W^{2, p}(Q; \RR^n)) \cap W^{1,2}((0, T); W^{1,2}(Q; \RR^n))$. An application of the classical Aubin--Lions lemma can be used to show that, up to the extraction of a further subsequence, 
\begin{equation}
\label{etah-to-eta-2}
\eta^{(h)} \to \eta \text{ in } C^0([0, T]; C^{1, \alpha}(Q; \RR^n)),
\end{equation}
where $\alpha$ is any number in $(0, 1 - n/p)$. In particular, we have that $\eta \in L^{\infty}((0, T); \E)$. Note also that with these convergences and the lower-semicontinuity of its left hand side, the first inequality in \eqref{tdEE} converges to the desired energy inequality.
\newline
\emph{Step 4:} Recall that each of the $\eta^{(h)}_j$ solves a variational inequality on the interval $[(j-1)h, jh]$. Rewriting these inequalities in terms of $\eta^{(h)}$ (see \eqref{etah-def}) and summing over $j$ yields that
\begin{multline}
\label{var-ineq-T}
\int_0^T \left[ DE^{(h)}(\eta^{(h)}(t)) + D_2R^{(h)}(\eta^{(h)}(t), \partial_t \eta^{(h)}(t))\right] \langle \varphi(t) \rangle\,dt \\ + \int_0^T \frac{\rho}{h} \langle \partial_t \eta^{(h)}(t) - \partial_t \eta^{(h)}(t - h), \varphi(t) \rangle_{L^2} \,dt \ge \int_0^T \langle f(t), \varphi(t) \rangle_{L^2}\,dt
\end{multline}
holds for all $\varphi \in C([0, T]; \Adm{\eta^{(h)}}{})$, as well as
\begin{multline}
\label{var-eq-T}
\int_0^T \left[ DE^{(h)}(\eta^{(h)}(t)) + D_2R^{(h)}(\eta^{(h)}(t), \partial_t \eta^{(h)}(t))\right] \langle \varphi(t) \rangle\,dt \\ + \int_0^T \frac{\rho}{h} \langle \partial_t \eta^{(h)}(t) - \partial_t \eta^{(h)}(t - h), \varphi(t) \rangle_{L^2} \,dt = \int_0^T \langle f(t), \varphi(t) \rangle_{L^2} + \langle \sigma^{(h)}(t),\varphi(t) \rangle\,dt
\end{multline}
for all $\varphi \in C([0, T],W^{k_0,2}_\Gamma(Q;\RR^n))$. The goal of this step is to obtain the technical intermediate results needed to pass to the limit as $h \to 0^+$ (or, alternatively, as $L \to \infty$) in \eqref{var-ineq-T} and \eqref{var-eq-T}. As observed in \cite{benesovaVariationalApproachHyperbolic2020}, the main difficulty is in passing to the limit with $DE(\eta^{(h)})$. Indeed, by assumption $DE$ is only continuous with respect to the strong topology of $W^{2, p}$. To improve the convergence of $\eta^{(h)}$ from what is provided by \eqref{etah-to-eta-1} and \eqref{etah-to-eta-2}, we rely on the Minty property of $DE$ (that is, \eqref{E6}). A key step in this direction is to obtain a uniform estimate in $L^2((0, T) ; L^2(Q; \RR^n))$ for the difference quotients that approximate the inertial term. To this end, we let 
\begin{equation}
\label{bh-def}
b^{(h)} \coloneqq \fint_t^{t + h}\partial_t \eta^{(h)}(s)\,ds = \frac{\eta^{(h)}(t + h) - \eta^{(h)}(t)}{h}.
\end{equation}
Then, \eqref{unifEbound} and \eqref{gradu'} imply that $b^{(h)}$ is bounded in $L^2((0, T); W^{1, 2}(Q; \RR^n))$. Recall that if $\varphi \in C^{\infty}_c((0, T) \times Q; \RR^n)$ then \eqref{var-ineq-T} holds as an equality. Then, reasoning as in Lemma 3.7 in \cite{benesovaVariationalApproachHyperbolic2020}, we obtain that $\partial_t b^{(h)}$ is bounded in $L^2((0, T); W^{-k_0, 2}(Q; \RR^n))$. Indeed, we have that
\begin{align}
\int_0^T \frac{\rho}{h} \langle \partial_t \eta^{(h)}(t) - \partial_t \eta^{(h)}(t - h), \varphi(t) \rangle_{L^2} \,dt & \le \int_0^T \biggr(\|DE(\eta^{(h)}(t))\|_{(W^{2, p})^*} + h^{a_0}\|\nabla^{k_0}\eta^{(h)}(t)\|_{L^2} \notag \\
& \qquad + \|D_2R(\eta^{(h)}(t), \partial_t \eta^{(h)}(t))\|_{(W^{1, 2})^*} \notag \\
& \qquad + h \|\nabla^{k_0} \partial_t \eta^{\eta}(t)\|_{L^2} + \|f(t)\|_{L^\infty} \biggr) \|\varphi(t)\|_{W^{k_0, 2}}\,dt. \label{dual-bound}
\end{align}
In view of \eqref{unifEbound}, \eqref{gradu'}, \eqref{E5}, and \eqref{R4}, it follows from \eqref{dual-bound} that
\begin{equation}\label{dteta-negative-estimate}
\int_0^T \rho \left\|\frac{\partial_t \eta^{(h)}(t) - \partial_t \eta^{(h)}(t - h)}{h}\right \|_{W^{-k_0, 2}}^2\,dt \le C,
\end{equation}
for some constant $C$ independent of $h$. Consequently, an application of the Aubin--Lions lemma shows that $b^{(h)}$ is precompact in $L^2((0, T); L^2(Q; \RR^n))$. As one can readily check (see in particular Lemma 3.8 in \cite{benesovaVariationalApproachHyperbolic2020}), up to the extraction of a subsequence (which we do not relabel), we have that $b^{(h)} \to \partial_t \eta$ in $L^2((0, T); L^2(Q; \RR^n))$.
 
Regarding the contact force, we can apply \Cref{par-cforce-cor} with $E^{(h)}$, $R^{(h)}$, $f$ and $\zeta = \partial_t \eta^{(h)}(\cdot-h)$, and use  $\xi_\Gamma\tilde n_{\eta^{(h)}}$ as a test function in \eqref{weak-cor}, where $\xi_\Gamma \in C_\Gamma^\infty(Q;[0,1])$ is given as in the proof of \Cref{existence-CF-quasi}. Note that this is allowed, as it follows from \Cref{smooth-almost-normal} that $\tilde n_{\eta^{(h)}}\in L^\infty((0,T); C^{k_0}(\overline Q;\RR^n))$. Using this together with the fact that $\|\sigma^{(h)}(t)\|_{M(\partial Q;\RR^n)}\leq 4 \langle \sigma^{(h)}(t), \xi_\Gamma \tilde n_{\eta^{(h)}}(t) \rangle$, we obtain that for $\mathcal{L}^1$-a.e.\@ $t\in(0,T)$
\begin{align*}
\frac{1}{4}\|\sigma^{(h)}(t)\|_{M(\partial Q;\RR^n)} & \leq \left[DE^{(h)}(\eta^{(h)}(t)) + D_2R^{(h)}(\eta^{(h)}(t), \partial_t \eta^{(h)}(t))\right] \langle \xi_\Gamma \tilde n_{\eta^{(h)}}(t)\rangle \\ & + \rho \left \langle \frac{\partial_t \eta^{(h)}(t)-\partial_t\eta^{(h)}(t-h)}{h}, \xi_\Gamma \tilde n_{\eta^{(h)}}(t) \right \rangle_{L^2} - \langle f(t), \xi_\Gamma \tilde n_{\eta^{(h)}}(t) \rangle_{L^2}.
\end{align*}
Reasoning as in the proof of \Cref{existence-CF-quasi}, we multiply the previous inequality by $\psi \in L^\infty((0,T); \RR^+)$ and integrate the resulting expression over $(0,T)$ to obtain 
\begin{align}
\frac{1}{4}\int_0^T & \|\sigma^{(h)}(t)\|_{M(\partial Q;\RR^n)}\psi(t) dt \leq \int_0^T \biggr\{\left[DE(\eta^{(h)}(t)) + D_2R(\eta^{(h)}(t), \partial_t \eta^{(h)}(t))\right] \langle \xi_\Gamma \tilde n_{\eta^{(h)}}(t)\rangle \notag  \\ 
& + 2h^{a_0}\left\langle \nabla^{k_0}\eta^{(h)}(t), \nabla^{k_0} (\xi_\Gamma \tilde n_{\eta^{(h)}}(t)) \right\rangle_{L^2}  +2 h \left\langle \nabla^{k_0} \partial_t\eta^{(h)}(t), \nabla^{k_0} (\xi_\Gamma \tilde n_{\eta^{(h)}}(t)) \right\rangle_{L^2} \notag \\ 
& + \rho \left \langle \frac{\partial_t \eta^{(h)}(t)-\partial_t\eta^{(h)}(t-h)}{h}, \xi_\Gamma \tilde n_{\eta^{(h)}}(t)\right \rangle_{L^2} - \langle f(t), \xi_\Gamma \tilde n_{\eta^{(h)}}(t) \rangle_{L^2}\biggr\}\psi(t)\,dt. \label{cf-L1bound}
\end{align}
We now estimate all the terms on the right hand side. Recall that $\norm[L^\infty(C^{k_0})]{\xi_\Gamma \tilde n_{\eta^{(h)}}}\leq C$ by \Cref{smooth-almost-normal}. By \eqref{unifEbound} and \eqref{E5} we see that
\begin{multline}
\label{cf-L1bound-DE}
\int_0^T DE(\eta^{(h)}(t)) \langle \xi_\Gamma \tilde n_{\eta^{(h)}}(t)\rangle \psi(t)\,dt \\ \leq \norm[L^\infty((W^{2,p})^*)]{DE(\eta^{(h)})}\norm[L^2(W^{2,p})]{\xi_\Gamma \tilde n_{\eta^{(h)}}} \norm[L^2]{\psi} \leq C \norm[L^2]{\psi};
\end{multline}
similarly, by \eqref{gradu'} and \eqref{eq:D2Rgrowthbounds} it follows that
\begin{multline}
\label{cf-L1bound-D2R}
\int_0^T D_2R(\eta^{(h)}(t), \partial_t \eta^{(h)}(t)) \langle \xi_\Gamma \tilde n_{\eta^{(h)}}(t)\rangle \psi(t)\,dt \\  \leq \norm[L^2((W^{1,2})^*)]{D_2R(\eta^{(h)},\partial_t\eta^{(h)})} \norm[L^\infty(W^{1,2})]{\xi_\Gamma \tilde n_{\eta^{(h)}}} \norm[L^2]{\psi} \leq C\norm[L^2]{\psi}.
\end{multline}
Notice also that by \eqref{unifEbound} we have that
\begin{multline}
\label{cf-L1bound-ha_0}
\int_0^T 2h^{a_0}\left\langle \nabla^{k_0}\eta^{(h)}(t), \nabla^{k_0} (\xi_\Gamma \tilde n_{\eta^{(h)}}(t)) \right\rangle_{L^2} \psi(t) \, dt \\ \leq 2h^{a_0} \norm[L^\infty(L^2)]{\nabla^{k_0}\eta^{(h)}(t)}\norm[L^2(W^{k_0,2})]{\xi_\Gamma \tilde n_{\eta^{(h)}}} \norm[L^2]{\psi} \leq h^{\frac{a_0}{2}}C \norm[L^2]{\psi},
\end{multline}
that an application of \eqref{gradu'} yields
\begin{multline}
\label{cf-L1bound-h}
\int_0^T  2h \left\langle \nabla^{k_0} \partial_t\eta^{(h)}(t), \nabla^{k_0} (\xi_\Gamma \tilde n_{\eta^{(h)}}(t)) \right\rangle_{L^2} \psi(t)\, dt \\ \leq 2h\norm[L^2(L^2)]{\nabla^{k_0}\partial_t\eta}\norm[L^2(W^{k_0,2})]{\xi_\Gamma \tilde n_{\eta^{(h)}} }\norm[L^2]{\psi} \leq \sqrt h C \norm[L^2]{\psi},
\end{multline}
and that the estimate
\begin{equation}
\label{cf-L1bound-f}
\int_0^T - \langle f(t), \xi_\Gamma \tilde n_{\eta^{(h)}}(t) \rangle_{L^2}\psi(t)\,dt \leq\norm[L^2(L^2)]{f}\norm[L^\infty(L^2)]{\xi_\Gamma \tilde n_{\eta^{(h)}}}\norm[L^2]{\psi}\leq C\norm[L^2]{\psi}
\end{equation}
follows as simple consequence of H\"older's inequality. 

Finally, for the inertial term, we first consider the case where $\psi \equiv 1$. Then we have by a discrete partial integration (i.e.\@, by a change of variables)
\begin{align*}
&\phantom{{}={}}\int_0^T \rho\left\langle \frac{\partial_t\eta^{(h)}(t)-\partial_t\eta^{(h)}(t-h)}{h}, \xi_\Gamma \tilde n_{\eta^{(h)}} (t)\right\rangle_{L^2} \psi(t) \,dt \\ 
&=\int_0^{T-h} \rho \left\langle \partial_t\eta^{(h)}(t), \xi_\Gamma \frac{\tilde n_{\eta^{(h)}} (t)  - \tilde{n}_{\eta^{(h)}}(t+h) }{h}\right\rangle_{L^2}\, dt \\ &+ \int_{T-h}^T \frac{\rho}{h} \left\langle \partial_t \eta^{(h)}(t), \xi_\Gamma \tilde n_{\eta^{(h)}} (t) \right \rangle_{L^2} dt -\int_{0}^h \frac{\rho}{h} \left\langle \partial_t \eta^{(h)}(t-h), \xi_\Gamma \tilde n_{\eta^{(h)}} (t) \right \rangle_{L^2} \,dt.
\end{align*}
For the second to last term on the right-hand side of the identity above, we estimate
\begin{align*}
 \int_{T-h}^T \frac{\rho}{h} \left\langle \partial_t \eta^{(h)}(t), \xi_\Gamma \tilde n_{\eta^{(h)}} (t)\right \rangle_{L^2}\, dt & \leq \frac{\rho}{h} \left( \int_{T-h}^T \norm[L^2]{\partial_t \eta^{(h)}(t)}^2\,dt \right)^{1/2} \left(\int_{T-h}^T\norm[L^2]{\xi_\Gamma \tilde n_{\eta^{(h)}} (t)}^2\,dt\right)^{1/2} \notag \\ 
& \leq \rho  \left(\fint_{T-h}^T \norm[L^2]{\partial_t \eta^{(h)}(t)}^2\,dt \right)^{1/2} \norm[L^{\infty}(L^2)]{\xi_\Gamma \tilde n_{\eta^{(h)}}} \le C
\end{align*}
and a similar, easier estimate holds for the first, as $\partial_t \eta^{(h)}(t-h)$ for $t<h$ is in fact already given by the initial data. This leaves us with
\begin{align*}
\int_0^{T-h} \rho \left\langle \partial_t\eta^{(h)}(t), \xi_\Gamma \frac{\tilde n_{\eta^{(h)}} (t)  - \tilde{n}_{\eta^{(h)}}(t+h) }{h}\right\rangle_{L^2} dt
\le C \rho \norm[L^2(L^2)]{\partial_t \eta^{(h)}} \norm[L^2(L^2)]{\partial_t \tilde n_{\eta^{(h)}}} \le C.
\end{align*}

Together with \eqref{cf-L1bound}--\eqref{cf-L1bound-f} this immediately shows that
\[
\int_0^T \|\sigma^{(h)}(t)\|_{M(\partial Q;\RR^n)} dt \leq C,
\]
thus proving that $\sigma^{(h)}$ is uniformly bounded in $L^1([0,T];M(\partial Q;\RR^n))$.

Finally we want to prove the absence of concentrations. For this, fix $t_0 \in [0,T)$, $\delta > 0$ and repeat the previous estimate with $\psi$ as the characteristic function of the small time interval $[t_0,t_0+\delta]$. As the normal direction cannot change too abruptly, if $\delta$ is small enough, we can choose $\tilde{n}_{\eta^{(h)}}$ to be piecewise constant in time. With this, as parts of the interval now cancels, we can estimate the inertial term using
 \begin{align}
  &\phantom{{}={}}\int_{t_0}^{t_0+\delta} \rho \left\langle \frac{\partial_t \eta^{(h)}(t)-\partial_t \eta^{(h)}(t-h)}{h}, \xi_{\Gamma} \tilde{n}_\eta^{(h)}(t) \right\rangle_{L^2}\,dt \notag \\
  &=\fint_{t_0+\delta-h}^{t_0+\delta} \rho \left\langle \partial_t \eta^{(h)}(t), \xi_{\Gamma} \tilde{n}_\eta^{(h)}(t) \right\rangle_{L^2} \, dt - \fint_{t_0-h}^{t_0} \rho \left\langle \partial_t \eta^{(h)}(t), \xi_{\Gamma} \tilde{n}_\eta^{(h)}(t + h) \right\rangle_{L^2}\, dt \notag \\
  &\leq \fint_{t_0+\delta-h}^{t_0+\delta} \rho \norm[L^2]{\partial_t \eta^{(h)}(t)}  \norm[L^2]{\xi_{\Gamma} \tilde{n}_\eta^{(h)}(t)}\,dt +  \fint_{t_0-h}^{t_0} \rho \norm[L^2]{\partial_t \eta^{(h)}(t)}  \norm[L^2]{\xi_{\Gamma} \tilde{n}_\eta^{(h)}(t + h)}\, dt \notag \\
  & \leq C\left(\sqrt{ \fint_{t_0+\delta-h}^{t_0+\delta} \norm[L^2]{\partial_t \eta^{(h)}(t)}^2\,dt} + \sqrt{ \fint_{t_0 - h}^{t_0} \norm[L^2]{\partial_t \eta^{(h)}(t)}^2\,dt} \right) \sup_{t \in [0, T]} \sqrt{|\supp \tilde{n}_{\eta^{(h)}}(t)|} \notag \\
  & \leq C \sup_{t \in [0, T]} \sqrt{|\supp \tilde{n}_{\eta^{(h)}}(t)|}, \label{cf-no-conc}
 \end{align}
 where in the last step we have used \eqref{unifEbound}.

 From the construction of $\tilde{n}_{\eta^{(h)}}$ (see \Cref{smooth-almost-normal}), it is clear that the support can be chosen arbitrarily small, albeit at the cost of increasing the norms of its derivatives. For any $\varepsilon > 0$, we can thus choose $\tilde{n}_{\eta^{(h)}}$ in such a way that the last term in \eqref{cf-no-conc} is bounded by $\varepsilon/2$. As all the other terms are estimated in terms of $\norm[L^2]{\psi}$ (see \eqref{cf-L1bound-DE}--\eqref{cf-L1bound-f}) with constant depending only on energy bounds and the choice of support of $\tilde{n}_{\eta^{(h)}}$, we can now choose $\delta$ small enough so that the contributions from all these remaining terms also sum up to at most $\varepsilon/2$. Then
 \begin{align}\label{eq:noConcentrations}
\frac{1}{4}\int_{t_0}^{t_0+\delta} \|\sigma^{(h)}(t)\|_{M(\partial Q;\RR^n)}\psi(t) dt < \varepsilon
 \end{align}
 independently of $t_0$ and $h$, which proves that $\sigma^{(h)} \in L^1((0, T); M(\partial Q;\RR^n))$ in fact cannot develop concentrations in time.
\newline
\emph{Step 5:} With this at hand, we proceed to prove that $\eta^{(h)}(t) \to \eta(t)$ in $W^{2, p}(K; \RR^n)$ for $\mathcal{L}^1$-a.e.\@ $t \in (0, T)$, where $K \subset \overline{Q}$ is such that $\dist(K, \Gamma) > 0$. Reasoning as in the first step, let $\tilde{\eta}^{(h)} \coloneqq (\eta \chi_{(0, T) \times Q}) \ast \xi_{h}$, where $\xi_h$ is defined as in \eqref{moll-def}. 
Let $\psi \in C^{\infty}_{\Gamma}(Q; [0, 1])$ be given. Straightforward computations show that
\begin{align*}
\|\tilde{\eta}^{(h)} \psi \|_{W^{k_0, 2}} & \le C h^{b_1(2 - k_0)}\|\eta\|_{W^{2, p}}, \\
\|\partial_t \tilde{\eta}^{(h)} \psi \|_{W^{k_0, 2}} & \le C h^{b_1(1 - k_0)}\|\partial_t \eta\|_{W^{1, 2}},
\end{align*}
where $C$ is a constant that does not depend on $h$. Let 
\[
\varphi^{(h)} \coloneqq (\eta^{(h)} - \tilde{\eta}^{(h)})\psi.
\]
Then, by \eqref{E6} we have that
\begin{align}
0 & \le \limsup_{h \to 0} \int_0^T \left[ DE(\eta^{(h)}(t)) - DE(\eta(t))\right]\!\langle (\eta^{(h)}(t) - \eta(t))\psi(t) \rangle\,dt \notag \\ 
& = \limsup_{h \to 0} \int_0^T DE(\eta^{(h)}(t))\langle  \varphi^{(h)}(t) \rangle\,dt \notag \\ 
& =  \limsup_{h \to 0} \int_0^T  DE^{(h)}(\eta^{(h)}(t))\langle  \varphi^{(h)}(t) \rangle\,dt - 2h^{a_0} \int_0^T \langle \nabla^{k_0} \eta^{(h)}(t), \nabla^{k_0} \varphi^{(h)}(t) \rangle_{L^2}\,dt, \label{minty-1}
\end{align}
where the first equality is obtained by using that $\tilde{\eta}^{(h)} \to \eta$ in $L^2((0, T); W^{2, p}(Q; \RR^n))$ together with the fact that $DE(\eta^{(h)})$ is uniformly bounded in $L^2((0, T); (W^{2, p}(Q; \RR^n))^*)$ as a consequence of \eqref{E5}. Observe that
\begin{align*}
\langle \nabla^{k_0} \eta^{(h)}(t), \nabla^{k_0} (\eta^{(h)}(t)\psi(t))\rangle_{L^2} & = \langle \nabla^{k_0} \eta^{(h)}(t), (\nabla^{k_0} \eta^{(h)}(t))\psi(t) + L^{(h)}(t)\rangle_{L^2} \\ 
& \ge \langle \nabla^{k_0} \eta^{(h)}(t), L^{(h)}(t)\rangle_{L^2}, 
\end{align*}
where $L^{(h)}$ only involves derivatives of $\eta^{(h)}$ of order less than $k_0$. Using the fact that $\eta^{(h)}$ is bounded in $L^{\infty}((0, T); W^{2, p}(Q; \RR^n))$ (see \eqref{unifEbound}) and that $h^{a_0}\nabla^{k_0}\eta^{(h)}$ is bounded in $L^{\infty}((0, T); L^2(Q; \RR^n))$, a standard interpolation inequality shows that $h^{a_0}\|L^{(h)}\|_{L^{\infty}L^2} \to 0$. Combining this with \eqref{minty-1} we obtain that
\begin{align}
0 & \le \limsup_{h \to 0} \int_0^T DE^{(h)}(\eta^{(h)}(t))  \langle \varphi^{(h)}(t) \rangle\,dt + C h^{a_0} \int_0^T \|\nabla^{k_0} \eta^{(h)}(t)\|_{L^2}\|\tilde{\eta}^{(h)}(t)\psi(t)\|_{W^{k_0, 2}} \,dt \notag \\ 
& \le \limsup_{h \to 0} \int_0^T DE^{(h)}(\eta^{(h)}(t))\langle  \varphi^{(h)}(t) \rangle\,dt + C h^{\frac{a_0}{2} - b_1(2 - k_0)} \notag \\
& = \limsup_{h \to 0} \int_0^T  DE^{(h)}(\eta^{(h)}(t))\langle \varphi^{(h)}(t) \rangle\,dt, \label{minty-2}
\end{align} 
where the last step follows by choosing $b_1$ sufficiently small. Observe that this condition is strictly stronger than what is required on $b_1$ from the first step. We can then use \eqref{var-eq-T} to rewrite the last term in \eqref{minty-2} as 
\begin{align*}
\int_0^T  DE^{(h)}(\eta^{(h)}(t))\langle \varphi^{(h)}(t) \rangle\,dt = & - \int_0^T  D_2R^{(h)}(\eta^{(h)}(t), \partial_t \eta^{(h)}(t))\langle \varphi^{(h)}(t) \rangle\,dt \\ & - \int_0^T \frac{\rho}{h} \langle \partial_t \eta^{(h)}(t) - \partial_t \eta^{(h)}(t - h), \varphi^{(h)}(t) \rangle_{L^2} \,dt \\ & +  \int_0^T \langle f(t), \varphi^{(h)}(t) \rangle_{L^2}\,dt + \int_0^T \langle \sigma^{(h)},\varphi^{(h)}(t) \rangle dt.
\end{align*}
Notice that 
\begin{multline}
 D_2R^{(h)}(\eta^{(h)}(t), \partial_t \eta^{(h)}(t))\langle \varphi^{(h)}(t) \rangle =  D_2R(\eta^{(h)}(t), \partial_t \eta^{(h)}(t))\langle \varphi^{(h)}(t) \rangle \\ + 2 h \langle \nabla^{k_0} \partial_t \eta^{(h)}(t), \nabla^{k_0}\varphi^{(h)}(t) \rangle_{L^2}.
\end{multline}
In view of \eqref{R4} and \eqref{etah-to-eta-1}, we have that $D_2R(\eta^{(h)}(t), \partial_t \eta^{(h)}) \to D_2R(\eta(t), \partial_t \eta(t))$ in $W^{1, 2}(Q; \RR^n)^*$ for a.e.\@ $t \in (0, T)$. Since $\varphi^{(h)}(t) \to 0$ in $W^{1, 2}(Q; \RR^n)$ for a.e.\@ $t \in (0, T)$ we have get that 
\[
 D_2R(\eta^{(h)}(t), \partial_t \eta^{(h)}(t))\langle \varphi^{(h)}(t) \rangle \to 0
\]
as $h \to 0$. Moreover, observe that 
\begin{equation}
\label{regto0}
h |\langle \nabla^{k_0} \partial_t \eta^{(h)}(t), \nabla^{k_0}\varphi^{(h)}(t) \rangle_{L^2}| \le h^{\frac{1}{2} - \frac{a_0}{2}}\|h^{\frac{1}{2}} \nabla^{k_0} \partial_t \eta^{(h)}(t)\|_{L^2} \|h^{\frac{a_0}{2}} \nabla^{k_0} \varphi^{(h)}(t)\|_{L^2}.
\end{equation}
Using \eqref{unifEbound} and \eqref{gradu'}, we obtain that the right-hand side of \eqref{regto0} vanishes, provided that $a_0 < 1$ (and that $b_1$ is chosen accordingly). In turn, Lebesgue's dominated convergence theorem implies that
\[
\int_0^T  D_2R^{(h)}(\eta^{(h)}(t), \partial_t \eta^{(h)}(t))\langle \varphi^{(h)}(t) \rangle\,dt \to 0
\]
as $h \to 0$. Next, we claim that
\[
\lim_{h \to 0 }\int_0^T \frac{1}{h} \langle \partial_t \eta^{(h)}(t) - \partial_t \eta^{(h)}(t - h), \varphi^{(h)}(t) \rangle_{L^2} \,dt = 0.
\]
To see this, we begin by rewriting the previous integral as
\begin{multline}
\label{dibp}
\int_0^T \frac{1}{h} \langle \partial_t \eta^{(h)}(t) - \partial_t \eta^{(h)}(t - h), \varphi^{(h)}(t) \rangle_{L^2} \,dt = \int_0^T \left\langle \partial_t \eta^{(h)}(t), (\tilde{\eta}^{(h)}(t) - \eta^{(h)}(t)) \frac{\psi(t + h) - \psi(t)}{h} \right\rangle_{L^2}\,dt \\ + \int_0^T \left \langle \partial_t \eta^{(h)}(t), \left(\frac{\tilde{\eta}^{(h)}(t + h) - \tilde{\eta}^{(h)}(t)}{h} - \frac{\eta^{(h)}(t + h) - \eta^{(h)}(t)}{h} \right)\psi(t + h) \right \rangle_{L^2}\,dt.
\end{multline}
Here, we notice that the first term on the right hand side of \eqref{dibp} converges to zero by recalling that $\partial_t \eta^{(h)} \rightharpoonup \partial_t \eta$ in $L^2((0, T); L^2(Q; \RR^n))$ (see \eqref{etah-to-eta-1}) and by also noticing that $\eta^{(h)} - \tilde{\eta}^{(h)} \to 0$ in $L^2((0, T); L^2(Q; \RR^n))$ by the properties of the mollification. Similarly, the second term vanishes as $h \to 0$ since we have previously shown that $b^{(h)}$ (defined in \eqref{bh-def}) admits a converging subsequence in $L^2((0, T); L^2(Q; \RR^n))$ and the analogous convergence continues to hold when $\eta^{(h)}$ is replaced with its mollification $\tilde{\eta}^{(h)}$. For the term involving the contact force, we note that Step 4 and
\eqref{etah-to-eta-2} together imply
\begin{align*}
 \lim_{h \to 0} \abs{\int_0^T \langle \sigma^{(h)}(t), \varphi^{(h)}(t) \rangle_{L^2}\,dt} \leq \lim_{h \to 0} \int_0^T \norm[M(\partial Q;\RR^n)]{\sigma^{(h)}(t)} \norm[C^0(\partial Q; \RR^n)]{\varphi^{(h)}(t)} dt = 0.
\end{align*}
Since clearly we also have that
\[
\lim_{h \to 0} \int_0^T \langle f(t), \varphi^{(h)}(t) \rangle_{L^2}\,dt = 0,
\] 
we conclude that 
\[
\limsup_{h \to 0} \int_0^T  DE^{(h)}(\eta^{(h)}(t))\langle \varphi^{(h)}(t) \rangle\,dt = 0.
\]
In view of \eqref{minty-2} and \eqref{E6}, this implies the desired result.
\newline
\emph{Step 6:} Let $\varphi \in C([0, T]; \Adm{\eta}{}) \cap C^1_c([0, T); L^2(Q; \RR^n))$ be given and let $\{\varphi_h\}_h$ be given as in \Cref{prop:TErecoveryGlobal}. Then \eqref{var-ineq-T} holds for $\varphi_h$ and we can pass to the limit as $h \to 0$. To be precise, we have that 
\[
\int_0^T  DE^{(h)}(\eta^{(h)}(t))\langle \varphi_h(t) \rangle\,dt \to \int_0^T  DE(\eta(t))\langle \varphi(t) \rangle\,dt,
\]
\[
\int_0^T  D_2R^{(h)}(\eta^{(h)}(t), \partial_t \eta^{(h)}(t))\langle \varphi_h(t) \rangle\,dt \to \int_0^T  D_2R(\eta(t), \partial_t \eta(t))\langle \varphi(t) \rangle\,dt,
\]
and clearly also 
\[
\int_0^T \langle f(t), \varphi_h(t) \rangle_{L^2}\,dt \to \int_0^T \langle f(t), \varphi(t) \rangle_{L^2}\,dt.
\]
Finally, recalling that $\partial_t \eta^{(h)}$ is to be understood as $\eta^*$ for $t < 0$ and assuming without loss of generality that $\varphi_h$ is extended constantly for negative times and that it vanishes for $t \ge T-h$, we observe that
\begin{align*}
&\phantom{{}={}}\int_0^T \frac{\rho}{h}\langle \partial_t \eta^{(h)}(t) - \partial_t \eta^{(h)}(t - h), \varphi_h(t) \rangle_{L^2}\,dt \\
& = - \int_0^{T-h} \frac{\rho}{h} \langle \partial_t \eta^{(h)}(t), \varphi_h(t + h) - \varphi_h(t) \rangle_{L^2}\,dt - \int_{-h}^0 \frac{\rho}{h} \langle \eta^*, \varphi_h(t + h)\rangle_{L^2}\,dt\\
& = -\int_0^{T-h} \frac{\rho}{h} \langle  \partial_t \eta^{(h)}(t), \int_t^{t+h} \partial_t \varphi_h(s) \,ds\rangle_{L^2}\, dt- \int_{-h}^0 \frac{\rho}{h} \langle \eta^*, \varphi_h(t + h)\rangle_{L^2}\,dt \\ 
& = -\int_0^T \frac{\rho}{h} \int_{t-h}^{t} \langle  \partial_t \eta^{(h)}(s),  \partial_t \varphi_h(t) \,ds\rangle_{L^2}\, dt - \int_{-h}^0 \frac{\rho}{h} \langle \eta^*, \varphi_h(t + h)\rangle_{L^2}\,dt\\
& = - \int_0^T \rho \langle b^{(h)}(t), \partial_t \varphi_h(t) \rangle_{L^2} \, dt - \int_{-h}^0 \frac{\rho}{h} \langle \eta^*, \varphi_h(t + h)\rangle_{L^2}\,dt\\
& \to - \int_0^T \rho \langle \partial_t \eta(t), \partial_t \varphi(t) \rangle_{L^2}\,dt - \rho \langle \eta^*, \varphi(0)\rangle_{L^2}.
\end{align*}
This shows that $\eta$ is a solution to the variational inequality \eqref{var-ineq-def}.

The last missing piece of the main result of this paper is to show that the previous procedure in fact yields existence of a solution to the full problem with a contact force. For this it is enough to show the convergence of contact forces. By the estimate on $\sigma^{(h)}$ in Step 4 and the convergence of $\eta^{(h)}$, \Cref{compactness-cforce} guarantees the existence of a subsequence (which we do not relabel) such that $\sigma^{(h)}\weakstarto \sigma$ in $M([0,T]\times \partial Q;\RR^n)$, where $\sigma \in M([0,T] \times \partial Q;\RR^n)$ is a contact force for $\eta$ and satisfies the action-reaction principle (see \Cref{def-contact-force}). Finally, in view of \eqref{eq:noConcentrations} we also obtain that $\sigma$ has no concentrations in time. Since the weak$^*$ convergence of measures is enough to pass to the limit in the term with $\sigma^{(h)}$ and since all the other terms in the weak formulation have been shown to convergence, this concludes the proof.
\end{proof}

\begin{rmk}[On the relation to \cite{kromerQuasistaticViscoelasticitySelfcontact2019}] 
\label{rem:KRopenProblem1}
In \cite{kromerQuasistaticViscoelasticitySelfcontact2019}, the authors studied the existence of solutions to the quasistatic problem \eqref{aux-Pb} via an implicit time discretization and arrived at results comparable with those obtained in \Cref{existence-CF-quasi}. We comment here on the two main differences between the results. 
\begin{itemize}
\item[$(i)$] The more important difference is that the methodology used in \cite{kromerQuasistaticViscoelasticitySelfcontact2019} does not allow one to recover the contact force as a measure. In turn, devising techniques that allow to avoid this loss of regularity for the contact force was posed an open problem in Remark 3.2 in \cite{kromerQuasistaticViscoelasticitySelfcontact2019}. To be precise, in their proof the authors derive all bounds on the contact force directly from the weak equation. As a result, the best regularity that one can achieve is that of the worst other term in the equation, and only allows to conclude that compactness holds in a negative Sobolev space. In contrast, we use a more detailed analysis of the discrete setting to provide explicit bounds in the space of measures, which in turn allows us to conclude existence of a proper limit measure.
 
\item[$(ii)$] The other difference is that throughout \Cref{Aux-sec} we consider energies and dissipation potentials for which the term of highest order is quadratic. In fact, this is only done in order to more easily obtain the precise energy estimate we need for proof of \Cref{main-thm-VI}. However, as shown in the proof of \Cref{main-thm-VI}, as long as the remaining highest order term still guarantees the compact embedding into $C^1$ and quasimonotonicity, we can use \eqref{E6} together with a standard Minty-type argument to get rid of the regularizing term. In fact, this provides another contrast to \cite{kromerQuasistaticViscoelasticitySelfcontact2019}, where this leads to an additional error term, as it is not possible to use the equation to both deal with the contact force and the estimate needed for the Minty-type argument at the same time.
\end{itemize} 
\end{rmk}

The existence of a solution with contact force for the quasistatic case is shown in the corollary below. We can thus claim to have solved the open problem formulated in Remark 3.2 in \cite{kromerQuasistaticViscoelasticitySelfcontact2019}.

\begin{cor}
\label{KR-recovery}
Let $E$ and $R$ be as in \eqref{E1}--\eqref{E6} and \eqref{R1}, \eqref{R2}, \eqref{R3q}, and \eqref{R4}, respectively, $T> 0$, and let $\eta_0 \in \E$ and $f \in L^2((0,T); L^2(Q; \RR^n))$ be given. Then the quasistatic problem \eqref{aux-Pb} admits a weak solution with a contact force, that is, there exist
\[
\eta \in L^\infty((0, T); \E) \cap W^{1,2}((0, T); W^{1,2}(Q; \RR^n)) 
\]
with $E(\eta) \in L^\infty(0, T)$ and $\eta(0) = \eta_0$ and a contact force $\sigma \in L_{w^*}^2((0, T); M(\partial Q;\RR^n))$ (see \Cref{def-contact-force}) such that
\[
\int_0^T \left[DE(\eta(t)) + D_2R(\eta(t), \partial_t \eta(t))\right] \langle \varphi(t)\rangle\,dt = \int_0^T \langle \sigma(t) ,\varphi(t)\rangle\,dt + \int_0^T \langle f(t), \varphi(t) \rangle_{L^2}\,dt
\]
holds for all $\varphi \in C([0,T]; W^{2,p}_{\Gamma}(Q;\RR^n))$.
\end{cor}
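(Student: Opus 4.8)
The plan is to obtain $(\eta,\sigma)$ as a limit of solutions to regularized problems, following the proof of \Cref{main-thm-VI} almost verbatim but with the substantial simplification that the inertial term and its time-discretization are entirely absent. Concretely, fix $k_0$ with $k_0 - \tfrac n2 > 2 - \tfrac np$ and, for a small parameter $h > 0$, set $E^{(h)}(\eta) \coloneqq E(\eta) + h^{a_0}\|\nabla^{k_0}\eta\|_{L^2}^2$ and $R^{(h)}(\eta,b) \coloneqq R(\eta,b) + h\|\nabla^{k_0}b\|_{L^2}^2$, exactly as in Step 1 of that proof; using \Cref{smooth-almost-normal-notime} and a mollification as in \eqref{eta-0h-def}, construct $\eta_0^{(h)} \in \E \cap W^{k_0,2}(Q;\RR^n)$ with $\eta_0^{(h)} \to \eta_0$ in $W^{2,p}(Q;\RR^n)$ and $E^{(h)}(\eta_0^{(h)}) \to E(\eta_0)$. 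One checks (using the compact embedding $W^{k_0,2}\hookrightarrow\hookrightarrow W^{2,p}$, and for the Korn bound interpolation together with \eqref{R3q}) that $(E^{(h)},R^{(h)})$ satisfies the assumptions of \Cref{Aux-sec}. Applying \Cref{existence-CF-quasi} on the interval $[0,T]$ (in place of $[0,h]$ there) with this pair and initial datum $\eta_0^{(h)}$ yields $\eta^{(h)} \in W^{1,2}((0,T);W^{k_0,2}(Q;\RR^n)) \cap L^\infty((0,T);\E\cap W^{k_0,2}(Q;\RR^n))$ and a contact force $\sigma^{(h)} \in L^2((0,T);M(\partial Q;\RR^n))$ for $\eta^{(h)}$, with $\eta^{(h)}(0) = \eta_0^{(h)}$, solving the weak equation of \Cref{sol-def-aux-measure} for $(E^{(h)},R^{(h)})$ against all $\varphi \in C([0,T];W^{k_0,2}(Q;\RR^n))$, together with the energy inequality and the bound \eqref{cforce-h-estimate}.

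Next I would establish uniform bounds. From the energy inequality, using that $\pm\partial_t\eta^{(h)}(t)$ is an admissible direction for $\eta^{(h)}(t)$ for a.e.\ $t$ (the same argument as in Step 3 of the proof of \Cref{AuxExistence}, applied with forward as well as backward difference quotients, since $\eta^{(h)}(t\pm\e)\in\E$), the Korn inequality \eqref{R3q}, \eqref{E3}, and Young's inequality, one obtains $h$-uniform bounds on $\|\eta^{(h)}\|_{L^\infty((0,T);W^{2,p})}$ and $\|\partial_t\eta^{(h)}\|_{L^2((0,T);W^{1,2})}$, as well as on $h^{a_0/2}\|\nabla^{k_0}\eta^{(h)}\|_{L^\infty L^2}$ and $h^{1/2}\|\nabla^{k_0}\partial_t\eta^{(h)}\|_{L^2 L^2}$; in particular the right-hand side of \eqref{cforce-h-estimate} stays bounded, so $\{\sigma^{(h)}\}_h$ is bounded in $L^2((0,T);M(\partial Q;\RR^n))$, hence in $M([0,T]\times\partial Q;\RR^n)$. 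Passing to a subsequence, $\eta^{(h)} \overset{\ast}{\weakto} \eta$ in $L^\infty((0,T);W^{2,p})$, $\eta^{(h)} \weakto \eta$ in $W^{1,2}((0,T);W^{1,2})$, $\eta^{(h)} \to \eta$ in $C([0,T];C^{1,\alpha})$ by Aubin--Lions, and $\sigma^{(h)} \weakto \sigma$ in $M([0,T]\times\partial Q;\RR^n)$. Weak lower semicontinuity gives $\eta(t)\in\E$ for all $t$ and $E(\eta)\in L^\infty$; \Cref{compactness-cforce} gives that $\sigma$ is a contact force for $\eta$ satisfying the action--reaction principle; and \Cref{cforce-l2intime} gives $\sigma\in L^2((0,T);M(\partial Q;\RR^n))$.

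The remaining and only genuinely delicate point is that $DE$ is continuous only along strongly $W^{2,p}$-convergent sequences, so before passing to the limit in the weak equation one must upgrade to $\eta^{(h)}(t) \to \eta(t)$ in $W^{2,p}(Q;\RR^n)$ for a.e.\ $t$. This is done by the Minty-type argument of Step 4 of the proof of \Cref{main-thm-VI}: with $\psi \in C^\infty_c((0,T)\times Q;\RR^+)$, $\tilde\eta^{(h)}$ a space-mollification of $\eta$, and $\varphi^{(h)} \coloneqq (\eta^{(h)} - \tilde\eta^{(h)})\psi$, one tests the equation with $\pm\varphi^{(h)}$ — here the contact-force contribution $\int_0^T \langle \sigma^{(h)}, \varphi^{(h)}\rangle\,dt$ is identically zero because $\varphi^{(h)}$ is supported away from $\partial Q$ whereas $\supp\sigma^{(h)} \subset [0,T]\times\partial Q$ — and shows $\limsup_{h\to 0}\int_0^T DE^{(h)}(\eta^{(h)})\langle\varphi^{(h)}\rangle\,dt = 0$, whence \eqref{E6} yields the claimed a.e.\ strong convergence. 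The estimates needed are literally those of \Cref{main-thm-VI} with every term involving $\rho$ and the difference quotient $b^{(h)}$ deleted, so the argument is strictly lighter here.

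Finally, fix $\varphi \in C([0,T];W^{2,p}(Q;\RR^n))$; since $C([0,T];W^{k_0,2}(Q;\RR^n))$ is dense in $C([0,T];W^{2,p}(Q;\RR^n))$ and every term of the equation depends continuously on $\varphi$ in the $C([0,T];W^{2,p})$-topology, it suffices to pass to the limit for $\varphi \in C([0,T];W^{k_0,2}(Q;\RR^n))$. The regularizing terms $2h^{a_0}\langle\nabla^{k_0}\eta^{(h)},\nabla^{k_0}\varphi\rangle$ and $2h\langle\nabla^{k_0}\partial_t\eta^{(h)},\nabla^{k_0}\varphi\rangle$ vanish by the bounds above; $\int_0^T DE(\eta^{(h)})\langle\varphi\rangle\,dt \to \int_0^T DE(\eta)\langle\varphi\rangle\,dt$ by the a.e.\ strong convergence, \eqref{E5}, and dominated convergence; $\int_0^T D_2R(\eta^{(h)},\partial_t\eta^{(h)})\langle\varphi\rangle\,dt$ converges exactly as in the proof of \Cref{main-thm-VI} via \eqref{R4}; $\int_0^T\langle\sigma^{(h)},\varphi\rangle\,dt \to \int_0^T\langle\sigma,\varphi\rangle\,dt$ by weak-$\ast$ convergence of measures and continuity of $\varphi$; and the forcing term passes trivially. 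This produces the weak solution with contact force asserted in the statement (and, by lower semicontinuity in the energy inequality of the regularized problems, the expected energy inequality as well). I expect the Minty step to be the only part requiring care; everything else is a routine, and lighter, transcription of the corresponding parts of the proof of \Cref{main-thm-VI}.
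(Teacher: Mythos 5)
Your proposal is correct and follows essentially the same route as the paper's proof, which likewise regularizes to $(E^{(h)},R^{(h)})$ on $W^{k_0,2}$, invokes the quasistatic existence theory of \Cref{Aux-sec}, and then transcribes Steps 4--6 of the proof of \Cref{main-thm-VI} with $\rho=0$. The extra details you supply (in particular the observation that $\langle\sigma^{(h)},\varphi^{(h)}\rangle$ vanishes in the Minty step because $\varphi^{(h)}$ is supported away from $\partial Q$, and the final density argument to upgrade from $W^{k_0,2}$- to $W^{2,p}$-valued test functions) are exactly what the paper leaves implicit when it says ``the rest of the proof follows the same strategy.''
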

\begin{proof}
The proof is analogous to that of \Cref{main-thm-VI}, but simpler. To be precise, let $E^{(h)}, R^{(h)}$ be the regularized energy-dissipation pair considered in the proof of \Cref{main-thm-VI}, and let $\{\eta_0^{(h)}\}_h$ be given as in \eqref{eta-0h-def}. Reasoning as above, an application of \Cref{AuxExistence} yields the existence of a family of deformations $\{\eta^{(h)}\}_h$ that solves the variational inequality for the regularized problem, and such that 
\begin{equation}
E^{(h)}(\eta^{(h)}(t)) + 2 \int_0^t R^{(h)}(\eta^{(h)}(s), \partial_t \eta^{(h)}(s))\,ds \le E^{(h)}(\eta_0^{(h)}) + \int_0^t \langle f(s), \partial_t \eta^{(h)}(s)\rangle_{L^2} \,ds.
\end{equation}
We remark here that the parameter $h$ is only introduced in order to apply the theory developed in \Cref{Aux-sec} and that in the absence of the inertial term we no longer require to consider a time-delayed problem in subintervals of length $h$, but rather we directly study the problem on the entire interval $[0, T]$. Since 
\[
K_R\|b\|_{W^{1, 2}}^2 \le R(\eta, b) \le R^{(h)}(\eta, b),
\]
by Young's inequality we obtain that 
\[
E^{(h)}(\eta^{(h)}(t)) + \int_0^t R^{(h)}(\eta^{(h)}(s), \partial_t \eta^{(h)}(s))\,ds \le C, 
\]
where $C$ only depends on $E(\eta_0), f$, and $K_R$. The rest of the proof follows the same strategy as that of \Cref{main-thm-VI}. However, the argument presented in Steps 4 through 6 can be simplified by formally substituting $\rho = 0$, which in particular allows to ignore any difficulties involving the inertia-term and use $L^2$ in time bounds there. Thus, we omit the details.
\end{proof}

\begin{rmk}
In the preprint version of this article we claimed that also in the inertial case the contact force is square-integrable in time, i.e.\@ $\sigma \in L^2_{w^*}([0,T];M(\partial Q))$. However, while revising for publication, we realized that there was an incorrect estimate in the respective proof. While this does not affect our conclusions for the quasistatic regime (see \Cref{KR-recovery}), for the inertial problem we can only guarantee that $\sigma$ is a measure without any concentration in time. Nevertheless we still conjecture that our original bounds will end up being correct.

Looking at the proof of \Cref{main-thm-VI}, in particular at Step 4, we have good $L^2$-in-time bounds on almost all of the terms involved. This, together with the weak equation itself, means that the only way for $\sigma$ to have a non square-integrable part is if it is cancelled by a similar (but opposite) contribution coming from the inertial term. Additionally, this contribution will have to happen at the boundary and in normal direction, as this is where the contact force is supported and pointed.

It now stands to reason that such a concentration in $\partial_{tt} \eta$ can only happen precisely at the instant of collision, since once two parts of the solid are in touch their behavior in normal direction will be similar to that of a single solid. But since locally those instances are naturally isolated in time, the proven absence of concentrations implies that these should not give any meaningful contribution. We were however unable to prove this rigorously.
\end{rmk}

\section{Physical considerations} \label{sec:physConsiderations}

The aim of this section is to display the physical content of the purely mathematical discussion in the rest of this paper. The way to this is threefold. First we will illustrate by an example that our assumptions on energy and dissipation are reasonable. Then we will consider how our result is intricately related to momentum conservation, both globally as well as in a localized version. Finally we show that in contrast to e.g.\@ rigid body or thin film dynamics, no additional contact law is required in our bulk setting, as the contact set (see \Cref{def-contact-set}), and thus the region of possibly instantaneous momentum change, is of lower dimension and thus has no influence on the total momentum.

\subsection{An example energy-dissipation pair}

We have already remarked on the need for a second order material in order to have sufficient regularity of the normal vectors. Additionally, it is an indirect requirement to allow us a control on the determinant in general, which in turn is related to local injectivity as well as the existence of a Korn-type inequality. Nevertheless, while we only ever use the relatively generic assumptions detailed in \Cref{Modelling-sec}, we have an example energy-dissipation pair in mind which fulfils all of those and at the same time can be considered physical. For a full discussion, we refer in particular to \cite[Sec. 2.3]{benesovaVariationalApproachHyperbolic2020}.

A good choice of energy has to be frame invariant, i.e.\@ it should not change under rotations in the image. At the same time, it needs to penalize compression, but in such a way that there still is a well defined Fr\'echet-derivative at any deformation of finite energy. Finally, it should still be related to the simpler models studied in engineering. Perhaps the most simple candidate that unifies these requirements is given by
\begin{align*}
 E(\eta) \coloneqq \int_Q \mathcal{C} (\nabla \eta^T \nabla \eta - I) : (\nabla \eta^T \nabla \eta - I) + \frac{c_1}{(\det \nabla \eta)^a} + c_2\abs{\nabla^2 \eta}^p\, dx,
\end{align*}
where $\mathcal{C}$ is a fourth order tensor, $c_1,c_2$ are positive constants, and $a > \frac{pn}{p-n}$ is needed to guarantee injectivity (see \cite{MR2567249}). Here the first term corresponds to classical elasticity theory, while the others can be seen as small, frame invariant perturbations needed in order to deal with large and irregular deformations.

Similar to the energy, the example dissipation we have in mind needs to be frame invariant as well; as a result, it is actually better to consider the dissipation potential as a function depending on $\partial_t (\nabla \eta^T \nabla \eta)$ instead (see also \cite{antmanPhysicallyUnacceptableViscous1998} for a detailed discussion). This leads directly to the most simple example of such a dissipation satisfying our assumptions:

\begin{align*}
 R(\eta,\partial_t \eta) \coloneqq  \int_Q \abs{\partial_t \nabla \eta^T \nabla \eta + \nabla \eta^T \partial_t \nabla \eta}^2 dx = \int_Q \abs{\partial_t (\nabla \eta^T \nabla \eta)}^2 dx.
\end{align*}
Korn-type inequalities (see \eqref{R3}) for this choice of the dissipation potential have been show by Neff \cite{Neff02} and Pompe \cite{Pompe03}.

\subsection{Momentum conservation and collision forces} 
The total momentum in the Lagrangian representation is given by the integral of the momentum density $\int_Q \rho \partial_t \eta \,dx$. This quantity is conserved in the dynamical case. Globally we can note that we can test each of the three equations we obtain in the course of the proof of \Cref{main-thm-VI} (Euler--Lagrange, time-delayed and hyperbolic) with $\varphi \coloneqq e_i$ to obtain the respective versions: 

\begin{align*}
 \frac{\rho}{h}\int_Q \frac{\eta_k^i-\eta_{k-1}^i}{\tau} \,dx &= \int_{\partial Q} d \sigma^i_k  + \frac{\rho}{h}\int_Q \zeta^i_k \,dx \\
 \frac{\rho}{h}\int_Q \partial_t \eta^i(t) \,dx &= \int_{\partial Q} d \sigma^i  + \frac{\rho}{h}\int_Q \partial_t \eta^i(t-h) \,dx \\
 \frac{d}{dt}\rho \int_Q \partial_t \eta^i \,dx &= \int_{\partial Q} d \sigma^i 
\end{align*}
where $\sigma$ is the respective contact force and we use that physically reasonable energy and dissipation functionals only act on the spatial derivatives, so since $\nabla e_i = 0$ we have $ DE(\eta) \langle e_i\rangle = 0$ and so on.

For the second equation, we also note that it implies
\begin{align*}
 \rho\partial_t \fint_{t-h}^t \int_Q \partial_t \eta^i\, dx ds = \frac{\rho}{h} \int_Q\partial_t \eta^i(t)\, dx - \frac{\rho}{h} \int_Q \partial_t \eta^i(t-h) \,dx = \int_{\partial Q} d \sigma^i,
\end{align*}
which again shows that time averages are the suitable quantities to study when considering convergence of the time delayed equation.

Additionally, since the contact force we obtain satisfies the action-reaction principle, it is not hard to show that all its contributions from self-contacts have to cancel in the end, which leaves us with
\[
 \frac{d}{dt} \rho \int_Q \partial_t \eta \,dx = \int_{\eta^{-1}(\partial\Omega)} d \sigma.
\]

We can thus immediately conclude that the total momentum can only change if there is contact with the exterior boundary $\partial \Omega$ and it can only do so in normal direction.

Similarly we have a momentum density as conserved quantity: There is a symmetric matrix valued stress tensor, in the form of a distribution $A \in (W^{1,p}(Q;\RR^{n\times n}))^*$ fulfilling
\[
\nabla \cdot A = DE + DR.
\]
Now, testing the final equation (the same can also be done in case of the approximations, with similar results) with $\varphi e_i$ for some $\varphi \in C^\infty([0,T] \times Q)$ yields
\[ 
\int_Q \varphi(T)  \rho\partial_t \eta^i(T) \,dx - \int_0^T \int_Q\rho \partial_t \eta^i \partial_t \varphi \,dx dt + \int_0^T \langle \nabla \cdot A_i, \varphi\rangle = \int_0^T \int_{\partial Q} \varphi d\sigma^i \,dt + \int_Q \varphi(0) \rho \partial_t \eta^i(0) \,dx,
\] 
which is of course a weak formulation of
\[
\partial_t (\rho \partial_t \eta^i) = \nabla \cdot A_i + \sigma^i,
\]
that is, the physical conservation of momentum in continuum mechanics. 

\subsection{An example of ``true bouncing''}

It should be noted that we only show existence of \emph{a}~solution, and that at no point we claim that solutions must be unique. While this is primarily due to the non-linear nature of the elastic energy, the potential lack of uniqueness can be seen to have important implications in a context where contact is allowed, especially for the resulting rebound dynamics.

When considering the reduced example of a point particle or a rigid body colliding with a fixed obstacle, one typically specifies an additional contact law, usually in the form of a reflection of (a fraction of) the velocity across the contact plane. This, however, is not the approach that we  followed in this paper. Instead, we only prohibit entering the obstacle, which results in an obvious source of non-uniqueness. In particular, if we were to apply our method to this case, the expected result would be the rigid body getting stuck at or sliding along the obstacle, as these correspond to the solutions with the least possible change in velocity.

We claim that this unphysical behavior is not possible with our approach, since the elastic solids that we consider have full dimension. The main reason for this is that, while there has to be an instantaneous change of velocity at the point of contact, which is not specified by the equations, this change of velocity happens instantaneously only at the point (or possibly at the surface) of contact and only at the time of contact. As this is a set of lower dimension, its unspecified influence on the total momentum and on the kinetic energy is negligible. Only for times after contact, these change continuously via the contact force and conversion into other energy types respectively, all of which are accounted for by the equations.

We illustrate this with an example.
\begin{example}[Bouncing ball]
 Let $\Omega = \RR^+ \times \RR^{n-1}$ be the half plane and consider a ball of radius $r$, with uniform density $\rho$ as the elastic solid, i.e.\ $Q = B_r(0)\subset \RR^n$ and an elastic energy $E$ for which this is the only rest configuration, i.e.\ the critical points of $E$ are precisely the Euclidean transformations, which all minimize $E$ with zero energy. We assume that initially, the solid is in such a configuration away from the wall, i.e., $\eta_0(x) = x+ le_1$ with $l > r$ and it has uniform initial velocity $v_0 = -e_1$ pointing directly towards the wall, so that there will be a collision.
 
 Now consider the solid's center of mass at time $t$, which we will denote by $y(t) \coloneqq \fint_Q \eta(t) \,dx$. Per definition we know that it evolves along with the total momentum, i.e., 
 \[
 m\dot{y}(t) = p(t) \coloneqq \fint_Q \rho \partial_t \eta(t)\,dx.
 \]
 Of this we have derived in the previous subsection that it can only change through contact forces arising from collisions with the boundary $\partial \Omega$, which for our half space $\Omega$ means that $\dot{p}_1 \geq 0$ and $\dot{p}_i = 0$ for $i > 1$, as those forces only have one normal direction to act in. As the geometry implies that $y_1(t) > 0$ for all times and since the initial conditions imply $\dot{y}_1(0) = -1$, we know that contact will happen. In turn, the main question is to understand how the center of mass will evolve afterwards. In particular, our goal is to show that there is a time $T$ such that $\dot{y}_1(T) > 0$, which means that the ball will perpetually move away from the wall, i.e.\@ we have ``true bouncing''.
 
 To see this, we analyze the situation further. First we observe that, before contact, the ball cannot deform. Indeed, this is guaranteed by the equation, but also for energy reasons, as uniform velocity is already the minimizer of kinetic energy for given momentum, i.e.\@ there is no other way to obtain nonzero elastic energy in the energy balance. So at the time $t_0$ of first contact we have $y_1(t_0) = r$. Now, as the change of total momentum cannot have concentrations in time, and $\dot{y}_1(t_0) = -1$, there is a time $t_1> t_0$ for which $y_1(t_1) < r$. If we assume that there is no rebound, i.e.\@ $\dot{y}_1 \leq 0$, then this needs to hold true for all future times as well.
 
 But this is at odds with the tendency of the solid to evolve towards a relaxed configuration in the absence of forces. Specifically assume that there is no bouncing, i.e.\@ $y(t)$ stays bounded. As this implies $\dot{y}_1 \leq 0$, then there exists a limit $y_1(t) \searrow y_\infty < r$, and obviously $p_1(t) \nearrow 0$ as well. Combining the latter with a consequence of the energy estimate, that
 \begin{align*}
  \frac{\rho}{2} \norm[L^2]{\partial_t \eta(t)}^2 + \int_0^t c \norm[L^2]{\nabla \partial_t \eta}^2 dt \leq   \frac{\rho}{2}\norm[L^2]{\partial_t \eta(t)}^2 + \int_0^t R(\eta, \partial_t \eta)^2 dt
 \end{align*}
 is bounded for some $c>0$, we get that $\partial_t \eta(t) \to 0$ in $L^2$ exponentially by a Gronwall argument. This in turn implies the existence of a pointwise limit $\eta(t) \to \eta_\infty$ almost everywhere, and we can choose a sequence of times $\{t_k\}_k$ with $t_k \to \infty$ such that $\nabla \partial_t \eta(t_k)\to 0$ in $L^2$ since the its integral in time is bounded. Similarly, we know that the integral of the total contact force is equal to the change in momentum and thus remains bounded. Without loss of generality, we can then choose $t_k$ in such a way that this quantity vanishes as well. Finally, compactness gives us that $\eta(t_k) \rightharpoonup \eta_\infty$ in $W^{2,p}(Q;\RR^n)$.
 
 But then, we see from the equation that all terms vanish, except possibly for $DE(\eta_\infty)$, which in turn implies that also $DE(\eta_\infty) = 0$. Hence, we conclude that $\eta_\infty$ is a translation of the identity that maps into $\Omega$. Thus, we have shown that $\fint_Q \eta_\infty\, dx \geq r> y_\infty$, which is a contradiction.
 
\end{example}

\bibliographystyle{siam}
\bibliography{biblio}
\end{document}